\documentclass[a4paper,10pt]{amsart}
\usepackage[T1]{fontenc}
\usepackage[utf8]{inputenc}
\usepackage{lmodern}
\usepackage{amsmath,amssymb,amsfonts,amsthm,mathtools,mathrsfs,bm}
\usepackage[a4paper,margin=25mm,headheight=14pt]{geometry}
\usepackage{array,longtable,booktabs,enumitem,needspace}
\usepackage{xcolor}
\usepackage[colorlinks=true,linkcolor=blue!55!black,citecolor=blue!55!black,urlcolor=blue!55!black,breaklinks=true]{hyperref}
\usepackage{xurl,microtype}
\newtheorem{theorem}{Theorem}[section]
\newtheorem{proposition}[theorem]{Proposition}
\newtheorem{lemma}[theorem]{Lemma}

\theoremstyle{definition}

\theoremstyle{remark}

\usepackage{xcolor}
\newcommand{\yichen}[1]{{\color{black}#1}}

\numberwithin{equation}{section}
\DeclareMathOperator{\Lip}{Lip}

\DeclareMathOperator{\Ran}{Ran}

\newcommand{\E}{\mathbb E}

\newcommand{\dd}{\,\mathrm d}
\newcommand{\code}[1]{\nolinkurl{#1}}
\allowdisplaybreaks[2]
\setlist[itemize]{leftmargin=2em,itemsep=3pt,topsep=4pt}
\setlist[enumerate]{leftmargin=2em,itemsep=3pt,topsep=4pt}
\hypersetup{pdftitle={Exact Endpoint Asymptotics and Almost-Sure Limits for Uniform Empirical Measures in Two Dimensions},pdfsubject={Bounded-Lipschitz distance, endpoint random matching, and a complete proof},pdfkeywords={empirical measures,Wasserstein distance,white noise,Neumann heat flow,almost-sure convergence}}
\title[Exact endpoint asymptotics for empirical measures]{Exact Endpoint Asymptotics and Almost-Sure Limits\texorpdfstring{\\}{ }for Uniform Empirical Measures in Two Dimensions}
\date{October 13, 2026}
\keywords{Two-dimensional random matching, empirical measures, bounded-Lipschitz distance, white noise, Neumann heat flow, gradient truncation, almost-sure convergence}
\subjclass[2020]{60D05, 60F15, 49Q22}

\begin{document}
\begin{abstract}
For the empirical measure of the first $n$ points of one infinite sequence of
independent uniform samples in the unit square, we identify the leading
constant in the bounded-Lipschitz distance to area measure, equivalently in
$W_1$, both in expectation and almost surely along all sample sizes.
The constant is $c_*=\kappa_1/(2\sqrt\pi)\in(0,1/4]$, where $\kappa_1$
is the linear-growth Gaussian variational constant of
Armegioiu--Goldman--Grotto--Trevisan. This answers the two-dimensional
bounded-Lipschitz convergence question posed by Yukich in 1989. We give the complete
endpoint argument, square--torus comparison, empirical reduction and
full-sequence probability argument. 
\end{abstract}
\author{Weitao Huang, Renzhen Wang, Yichen Wu, Deyu Meng}
\maketitle
\tableofcontents

\section{Introduction}\label{sec:introduction}
\subsection{The open problem and its source}
How accurately does a finite random sample approximate a continuous spatial
distribution in transportation distance? In dimension two the answer contains
a logarithmic correction. The matching theorem of Ajtai, Koml\'os and
Tusn\'ady~\cite{ajtai1984optimal} initiated the quantitative study of this critical scale.
Yukich's Theorem~2.1~\cite{Yukich} gives positive finite bounds for the
normalized bounded-Lipschitz distance with probability tending to one.
In the unnumbered paragraph immediately following that theorem
\cite[p.~1053]{Yukich}, he asks whether the normalized distance converges
almost surely to a constant.
We address that question for the original distance and one infinite sample
sequence.

Let $Q=[0,1]^2$ have the Euclidean distance and area probability measure
$\mu=dx$. On an arbitrary probability space, let $X_1,X_2,\ldots$ be
measurable independent random variables with law $\mu$, and put
$\mu_n=n^{-1}\sum_{i=1}^n\delta_{X_i}$. We use two separate constraints:
\begin{equation}\label{eq:intro-BL}
d_{BL}(\nu,\eta)=\sup_{\|u\|_\infty\le1,\ \Lip_Q u\le1}
\left|\int_Q u\,d(\nu-\eta)\right|.
\end{equation}
All logarithms are natural. The question is whether
$\sqrt{n/\log n}\,d_{BL}(\mu_n,\mu)$ has a positive deterministic
almost-sure limit. Since the diameter of $Q$ is $\sqrt2<2$, midrange
translation makes the amplitude bound inactive. Kantorovich--Rubinstein
duality then identifies $d_{BL}$ with the actual coupling distance $W_1^Q$.
Section~\ref{orig:II:2} proves the identification and its normalization.

\subsection{Main result}
\begin{theorem}\label{thm:main}
There is a deterministic constant $c_*$ such that
\begin{equation}\label{eq:intro-constant}
c_*=\frac{\kappa_1}{2\sqrt\pi},\qquad 0<c_*\le\frac14,
\end{equation}
and
\begin{align}
\sqrt{\frac n{\log n}}\,\E d_{BL}(\mu_n,\mu)&\longrightarrow c_*,
\label{eq:intro-mean}\\
\sqrt{\frac n{\log n}}\,d_{BL}(\mu_n,\mu)&\longrightarrow c_*
\quad\text{almost surely}.\label{eq:intro-as}
\end{align}
The almost-sure limit holds along all integers for the same infinite sample
sequence. Both statements also hold for the coupling distance $W_1^Q$.
\end{theorem}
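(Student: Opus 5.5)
We first reduce the statement to $W_1^Q$. Since $\operatorname{diam}Q=\sqrt2<2$, any $u$ with $\Lip_Q u\le1$ has oscillation at most $\sqrt2$. Subtracting its midrange therefore gives a function with $\|u\|_\infty\le\sqrt2/2<1$, and this does not change $\int u\,d(\nu-\eta)$ for probability measures. Hence $d_{BL}=W_1^Q$ by Kantorovich--Rubinstein duality, and it suffices to prove \eqref{eq:intro-mean} and \eqref{eq:intro-as} for $W_1^Q$.

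The mean asymptotics will come from a Gaussian endpoint analysis. We regularize $\mu_n-\mu$ by the Neumann heat semigroup $P_t$ at time $t=t_n\downarrow0$, with $nt_n\to\infty$ only polylogarithmically, for instance $t_n=(\log n)^{\beta}/n$. The error $W_1(\mu_n,P_{t}\mu_n)\lesssim\sqrt t$ is negligible against $\sqrt{\log n/n}$ by a factor $(\log n)^{(\beta-1)/2}$, so we need $\beta<1$. Write $f_n=P_t\mu_n-1$. The dual problem becomes $\sup\{\int u f_n:\Lip u\le1\}$.

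Next, compare $f_n$ with the smoothed white noise $n^{-1/2}P_t\xi$. The standard route is a KMT/Poissonization or a local CLT at the mesoscopic scale; this is justified because $nt_n\to\infty$ makes each heat-kernel window contain many points. The Gaussian problem $\sup_{\Lip u\le1}\int u\,P_t\xi$ grows like $\sqrt{\log(1/t)}$. Its constant is extracted by the linear-growth variational constant $\kappa_1$ of Armegioiu--Goldman--Grotto--Trevisan, after a square--torus comparison: reflections turn the Neumann problem on $Q$ into a torus problem, and boundary layers contribute lower order. Since $\log(1/t_n)\sim\log n$, this yields $\sqrt{n/\log n}\,\E W_1\to\kappa_1/(2\sqrt\pi)$, where the factor $2\sqrt\pi$ comes from the heat-kernel normalization $\int_t^1 (4\pi s)^{-1}ds$-type Green function asymptotics. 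The bounds $0<c_*\le1/4$ follow from the known positivity of $\kappa_1$ and from the upper bound obtained by the trivial choice of gradient truncation $\nabla u=\nabla\Delta^{-1}f$ capped at norm $1$.

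For the almost-sure statement we combine concentration with monotone interpolation. By McDiarmid's inequality, $W_1^Q(\mu_n,\mu)$ has bounded differences $\sqrt2/n$, so
\[
P\bigl(|W_1-\E W_1|>\varepsilon\sqrt{\log n/n}\bigr)\le 2\exp(-c\varepsilon^2\log n).
\]
This bound is not summable for small $\varepsilon$. We therefore apply it along a geometric subsequence $n_k=\lfloor(1+\delta)^k\rfloor$, where it is summable, and Borel--Cantelli gives the limit along $n_k$. To fill the gaps, for $n_k\le n<n_{k+1}$ write $\mu_n$ as a convex combination of $\mu_{n_k}$ and the block measure; this gives $|W_1(\mu_n,\mu)-W_1(\mu_{n_k},\mu)|\le\sqrt2(n-n_k)/n\cdot\ldots$. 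That crude bound is too weak, however. The refinement is to control the block's own distance $W_1(\mu_{n_k,n},\mu)$ by convexity: $W_1(\mu_n,\mu)\le\frac{n_k}{n}W_1(\mu_{n_k},\mu)+\frac{n-n_k}{n}W_1(\text{block},\mu)$. A matching lower bound follows by writing $\mu_{n_{k+1}}$ in terms of $\mu_n$. Blocks of size $\asymp\delta n$ have distance $O(\sqrt{\log n/(\delta n)})$ almost surely, by the same subsequence argument with a maximal inequality over at most $\delta n_k$ indices. Letting $\delta\to0$ closes the gaps.

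The main obstacle is the Gaussian endpoint identification, in particular matching the empirical problem to $\kappa_1$ with exact constant. That step needs a strong coupling at scale $t_n$ with an error $o(\sqrt{\log n})$ uniformly over $1$-Lipschitz test functions. We would first try a dyadic Poissonization with Bernstein bounds per cell, combined with gradient truncation to restrict to Lipschitz $u$ having controlled $H^1$ energy.
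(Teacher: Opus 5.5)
You reduce to $W_1^Q$ exactly as the paper does. The mean asymptotics, however, has a genuine gap at the Gaussian replacement: with a single heat time, your two requirements on $t_n$ are incompatible. The displacement bound forces $t_n=(\log n)^\beta/n$ with $\beta<1$, so the smoothed source lives on $d\asymp 1/t_n=n/(\log n)^\beta$ Neumann modes. Write the truncated flux as a normalized sum of the i.i.d.\ vectors $V_{X_i}=J_N(\Pi_L(\delta_{X_i}-\mu))$, which satisfy $|V_x|\le 2\sqrt{m_N(L)}$. The available growing-dimension CLT (Zhai) then bounds the $L^2$ flux coupling error by $C\sqrt{d\,m_N(L)}\,\log n/\sqrt n\asymp(\log n)^{(3-\beta)/2}$, which is $o(\sqrt{\log n})$ only if $\beta>2$. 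Having $nt_n\to\infty$ points per heat window does not by itself give a coupling that is $o(\sqrt{\log n})$ uniformly over unit Lipschitz tests, and you give no argument that would.

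The idea that removes the conflict is the paper's two-scale smoothing. First move particles only to time $s=1/n$, at cost $\sqrt{\pi/n}$. Then connect $P^N_s\mu_n$ to $P^N_{t_n}\mu_n$, with $t_n=(\log n)^B/n$ and $B>2$, through $P^N_s-P^N_{t_n}=\int_s^{t_n}(-\Delta_N)P^N_u\,du$. Since $\mathcal S\le\|\cdot\|_{H_N^{-1}}$ and $\E\|(-\Delta_N)P_u^N(\mu_n-\mu)\|_{H_N^{-1}}\le C/(u\sqrt n)$, this bridge costs only $CB\log\log n/\sqrt n=o(\sqrt{\log n/n})$ and needs no Gaussian approximation. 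Gaussian replacement is then done only at the coarse cutoff $L_n=1/t_n$: a heat-to-sharp error of $O(1)$, then Zhai, using that $J_N(f)\mapsto\mathcal S(f)$ is $1$-Lipschitz on the flux space. There the error $(\log n)^{(3-B)/2}$ is negligible.

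The square--torus step is also only asserted, and the mechanism you name does not deliver the AGGT problem. Even reflection sends Neumann white noise on $Q$ to an even field on the doubled torus, whose Fourier coefficients are tied across $(\pm k_1,\pm k_2)$. That field is not periodic white noise, so the AGGT limit does not apply to it.

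Nor can boundary layers simply be cut off. For $w=s\chi_\delta v$ to be unit Lipschitz you need $s=1/(1+\|v\|_\infty/\delta)$. The loss $(1-s)\mathcal S(\xi_N^L)$ is then comparable to the whole width $\sqrt{m_N(L)}$ unless $\|v\|_\infty\ll\delta$. The paper handles this in three steps:
\begin{enumerate}
\item It localizes the \emph{expected} width to tests of oscillation at most $2\varepsilon$, via Gaussian concentration for a maximum over an $\exp(C\varepsilon^{-2})$-element cover of $\mathcal K$, at cost $C/\varepsilon$.
\item It then cuts off near $\partial Q$, at cost $(\varepsilon/\delta+8\delta)\sqrt{m_N(L)}$.
\item It couples the Neumann and unit-torus heat sources through one white noise, so that $\E\|\eta_N(t)-\eta_T(t)\|_1\le C$.
\end{enumerate}
Combined with $\mathcal Z\le\mathcal T\le\mathcal S$, this gives $|F_N(L)-F_T(L)|=O(m_N(L)^{1/3})$.

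Your justification of $c_*\le 1/4$ is backwards. A test function only gives lower bounds, and a pointwise-capped gradient field is not a gradient. The upper bound is primal: take $V=0$ in $\kappa_1=\inf_V\E_s|G+V|$, which gives $\kappa_1\le\E_s|G|=\sqrt\pi/2$.

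Your almost-sure argument, by contrast, is sound and differs from the paper's. It uses convexity over blocks together with a maximal bound for the unnormalized block sums $\|\sum_{i=n_k+1}^{n}(\delta_{X_i}-\mu)\|_{\mathrm{KR}}$. That bound follows from Ottaviani with McDiarmid tails, summable along $n_k$, using $\sup_{j\ge2}\E\|\sum_{i\le j}(\delta_{X_i}-\mu)\|_{\mathrm{KR}}/\sqrt{j\log j}<\infty$ from the mean limit. This yields bounds $c_*\pm C\sqrt\delta$, replacing the paper's Doob decomposition of the submartingale $nW_n$, maximal Azuma for its martingale part, and monotone sandwiching of the compensator.
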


The value of the constant is an exact variational characterization. In the
Gaussian scale space constructed in Section~\ref{orig:III:3}, let $G$ be the
terminal Gaussian vector with covariance $\tfrac12 I_2$ and $\mathsf P$ the
compatible finite-exponent projection describing solenoidal corrections.
Define
\begin{equation}\label{eq:intro-kappa}
\mathcal P_1=\overline{\bigcup_{r>1}\Ran_{L_1^r}\mathsf P}^{\,L_1^1},
\qquad \kappa_1=\inf_{V\in\mathcal P_1}\E_s|G+V|.
\end{equation}
We do not assume an $L_1^1$-bounded extension of $\mathsf P$.
Section~\ref{orig:III:5} proves positivity by a stopped Gaussian martingale
and the upper bound by $V=0$. The fixed reference~\cite{AGGT} reports
$\kappa_1\approx0.8523$, corresponding to $c_*\approx0.24043$.
These are reported approximations without a numerical certificate supplied
here; $1/4$ is an upper bound, not an asserted equality.

\subsection{Main contributions}
Our theorem establishes an exact almost-sure first-order law for the Wasserstein discrepancy between the uniform distribution on the square and its random samples in the critical dimension of two. It resolves Talagrand's Problem 4.5.4 and, consequently, the AKT conjecture \yichen{that the transport cost, normalized by its known order $\sqrt{\log n/n}$, converges to a deterministic constant}~\cite{ajtai1984optimal,talagrand2014upper}. While the order of the discrepancy was previously known, our result identifies a deterministic leading coefficient shared by almost every sample path, establishing a precise form of self-averaging. The proof separates the identification of the mean asymptotics from pathwise convergence, using the empirical norm structure and a nondecreasing compensator for the latter. 
It connects empirical process theory and random geometry to optimal transport through Beckmann's flux formulation, finite-exponent variational problems, measurable gradient truncation, and reflected heat flow~\cite{Beckmann,Santambrogio}. The logarithmically divergent inverse-Laplacian trace combines with the endpoint variational value to determine the leading coefficient. While Poisson-field and PDE approaches to quadratic matching motivate our Gaussian viewpoint~\cite{CLPS,CS,ambrosio2019pde}, the linear-cost setting exhibits a distinct variational feature: a solenoidal adjustment can reduce the Poisson flux's \(L^1\) energy, a distinction captured by the optimization in \eqref{eq:intro-kappa}.

The result also provides an asymptotic benchmark for sampling and integration~\cite{bobkov2021simple}. By the Kantorovich--Rubinstein duality, the Wasserstein distance equals the supremum of the integration error over all 1-Lipschitz test functions, so the limit characterizes the leading sampling error uniformly over this function class. \yichen{Its leading constant thus informs first-order sample-size planning, comparison of sampling designs, and benchmarks for transport algorithms.}
\yichen{Beyond these applications,} our treatment of the square boundary and the linear-cost endpoint may also offer methodological insights for other random transport problems. These implications should be interpreted within the scope of the theorem, which concerns the uniform law on the square: finite-sample guarantees, confidence thresholds, algorithmic improvements, and fluctuation distributions require additional analysis beyond the leading-order limit.

\subsection{Relation to prior results}
Armegioiu, Goldman, Grotto and Trevisan~\cite{AGGT} determine the expected
bipartite matching cost on the flat torus for every finite power exponent,
including linear cost. Their Theorem~2.19 already proves the periodic
Gaussian endpoint $q=1$ with constant $\kappa_1$. We do not claim that endpoint
as a new discovery. Our target is the single empirical sample against area
measure on the Euclidean square, together with convergence along every
prefix of the same infinite sequence. A periodic bipartite mean theorem
alone does not establish it: the boundary, one-sample covariance
normalization and almost-sure upgrade must all be treated.

We use the fixed version of~\cite{AGGT} for the finite-exponent Gaussian
limit, scale projection and variational characterization.
Section~\ref{sec:periodic} reconstructs the endpoint argument in full,
rather than assuming its conclusion. The other research inputs are
M\"uller's local gradient truncation~\cite{Muller} and Zhai's quantitative
central limit theorem~\cite{Zhai}. Their statements and hypotheses are
spelled out at the points of use. The contribution of the present proof is
its complete passage to the square's one-sample model and full-sequence
strong law, with the same definitions as the retained formal development.

\subsection{Proof strategy and organization}
Let $F_N(L)$ and $F_T(L)$ be the expected Lipschitz dual norms of Neumann
and periodic white noise truncated at eigenvalue $L$, defined fully in
Sections~\ref{sec:preliminaries} and~\ref{sec:periodic}. For fixed $B>2$,
put $L_n=n/(\log n)^B$ and $W_n=W_1^Q(\mu_n,\mu)$. The mean proof follows
\begin{equation}\label{read:chain}
\begin{aligned}
\sqrt n\,\E W_n
&=F_N(L_n)+o(\sqrt{\log n})\\
&=F_T(L_n)+o(\sqrt{\log n})\\
&=\kappa_1\sqrt{\frac{\log n}{4\pi}}+o(\sqrt{\log n}).
\end{aligned}
\end{equation}
We prove the periodic endpoint first, compare the boundary models next,
and then perform the empirical reduction, whose explicit error budget is
\begin{equation}\label{eq:intro-reduction}
\left|\E W_n-\frac{F_N(L_n)}{\sqrt n}\right|
\le\frac{C_B}{\sqrt n}
\left[1+\log\log n+(\log n)^{(3-B)/2}\right]
\end{equation}
for all sufficiently large $n$. Each error is negligible at the target scale;
the trace asymptotic gives the constant in \eqref{eq:intro-constant}.

For the pathwise conclusion, the norm representation makes $S_n=nW_n$ a
submartingale with bounded increments. Maximal Azuma controls its martingale
component, leaving a nondecreasing Doob compensator. Concentration on
countably many geometric subsequences and monotone interpolation give all
integers. We prove the inequalities used below, crediting the classical
methods of Hoeffding and Azuma~\cite{Hoeffding,Azuma}.

\subsection{AI use statement}
\label{sec:ai-use}
We used AI in two aspects of this research. First, we developed a reusable AI-agent workflow for mathematical research, which we plan to release, and applied it to Yukich's question. The agents assisted in exploring proof strategies, testing conjectures through counterexample search, and refining mathematical arguments. Second, ChatGPT assisted with manuscript writing, including organizing mathematical arguments, refining proof exposition, and improving the clarity and presentation of the paper. We designed the workflow, selected the problem, supervised the research, and took responsibility for the mathematical results and this disclosure. 
The final proof has been formally verified in Lean.

The remainder of this paper is organized as follows. Section~\ref{sec:setup} fixes the conventions.
Section~\ref{sec:preliminaries} proves metric duality, spectral estimates
and reflected heat-flow bounds. Section~\ref{sec:periodic} develops the
complete Gaussian endpoint argument, and Section~\ref{sec:comparison}
compares the square and torus. Section~\ref{sec:empirical} proves the mean
reduction and full-sequence strong law; Section~\ref{sec:conclusion} completes
Theorem~\ref{thm:main}. Section~\ref{app:analytic} supplies additional
foundational details and explicit constants. All detailed derivations follow in the
main text, with original equation tags retained for source comparison.
\section{Problem setting and conventions}\label{sec:setup}

The theorem is sensitive to three conventions: the Euclidean square, one fixed infinite sample sequence, and the two separate bounds in the bounded-Lipschitz test class. We fix them before introducing auxiliary Gaussian models. Those models will compare expectations; the pathwise argument will remain on the original probability space.

Let

\begin{equation*}
Q=[0,1]^2,\qquad \rho(x,y)=|x-y|,\qquad \mu=dx,\qquad \mu(Q)=1.
\end{equation*}

Fix one infinite iid sequence $X_1,X_2,\ldots$, with each $X_i$ having law $\mu$, and define

\begin{equation*}
\mu_n=\frac1n\sum_{i=1}^n\delta_{X_i}.
\end{equation*}

The distance in the original question is

\begin{equation*}
d_{BL}(\nu,\eta)=
\sup\left\{\left|\int_Qf\,d\nu-\int_Qf\,d\eta\right|:
\|f\|_\infty\le1,\ \operatorname{Lip}_Q(f)\le1\right\}.
\end{equation*}

The question is whether a deterministic constant $C\in(0,\infty)$ exists such that

\begin{equation*}
\sqrt{\frac n{\log n}}\,d_{BL}(\mu_n,\mu)\longrightarrow C
\qquad\text{almost surely}?
\end{equation*}

All logarithms are natural. In the empirical section, $n$ always denotes the first $n$ points of this same sequence. Whenever the periodic section uses $n$ for a deterministic cutoff sequence, this is specified separately, as is Gaussian expectation. Both the square and the unit torus have area one. The parameter $L$ always denotes an eigenvalue cutoff; the periodic wave-number cutoff is $\Lambda$, and their relation is computed from the spectrum. Symbols such as $C_0,C_1,\ldots$ and unindexed estimate constants denote finite deterministic constants, allowed to increase between inequalities but independent of the limiting parameter. The final constant of the original problem is denoted by $c_*$.

\section{Metric duality, the Neumann spectrum, and reflected heat flow}\label{sec:preliminaries}

The empirical measure is atomic, so we do not assign it an ordinary square-integrable Poisson flux. Instead we first establish transport duality and flux estimates for regular sources, together with the spectral and reflected-heat constructions that produce such sources. Later sections account for the cost of these approximations. The logarithmic inverse-spectrum trace fixes the Gaussian scale, while transport optimization will determine its coefficient.

\subsection{Domain, test functions, and the original distance}\label{orig:II:1}

Let

\begin{equation*}
Q=[0,1]^2,\qquad \mu=dx,\qquad \mu(Q)=1.
\end{equation*}

Here $dx$ is ordinary two-dimensional Lebesgue measure, since the unit square has area one. The square always carries the Euclidean distance $|x-y|$, and

\begin{equation*}
\operatorname{diam}Q
=\max_{x,y\in Q}|x-y|
=|(1,1)-(0,0)|=\sqrt2.
\end{equation*}

For a real function $u:Q\to\mathbb R$ define

\begin{equation*}
\operatorname{Lip}_Q u
=\sup_{\substack{x,y\in Q\\x\ne y}}
\frac{|u(x)-u(y)|}{|x-y|}.
\end{equation*}

Unit Lipschitz continuity implies continuity, so the maximum and minimum are attained on compact $Q$. For a finite signed measure $f$ with $f(Q)=0$ define

\begin{equation*}
\mathcal K
=\left\{u\in C(Q):
\operatorname{Lip}_Q u\le1,\quad \int_Q u\,dx=0\right\},
\qquad
\mathcal S(f)=\sup_{u\in\mathcal K}\int_Q u\,df.
\end{equation*}

For integrable $f$, the notation $\int_Q u\,df$ means $\int_Q u(x)f(x)\,dx$. The symbol $\mathcal S$ is precisely the square Lipschitz dual used throughout.

The test class is symmetric under $u\mapsto-u$ and contains zero. Thus

\begin{equation}
\mathcal S(f)
=\sup_{u\in\mathcal K}\left|\int_Q u\,df\right|
=\mathcal S(-f)\ge0.
\tag{P1}\label{eq:original-P1}
\end{equation}

In the first equality any negative pairing becomes its absolute value by replacing the test with $-u$; conversely, the absolute pairing cannot exceed the supremum over this symmetric class.

For any $u\in\mathcal K$ and $x\in Q$, using $\int_Qu=0$ and $|Q|=1$,

\begin{equation}
\begin{aligned}
|u(x)|
&=\left|\int_Q(u(x)-u(y))\,dy\right|\\
&\le\int_Q|u(x)-u(y)|\,dy\\
&\le\int_Q|x-y|\,dy
\le\sqrt2.
\end{aligned}
\tag{P2}\label{eq:original-P2}
\end{equation}

The four steps use mean normalization, the integral triangle inequality, the unit Lipschitz condition, and the square's diameter. Consequently $\mathcal S(f)\le\sqrt2\,|f|(Q)<\infty$, where $|f|$ is the total variation measure.

\subsection{\texorpdfstring{The complete identification of $d_{\mathrm{BL}}=W_1^Q$}{The complete metric identification}}\label{orig:II:2}

\subsubsection{The Kantorovich--Rubinstein duality input}\label{orig:II:2.1}

For probability measures $\nu,\eta$ on $Q$, let $\Pi(\nu,\eta)$ be the set of probability couplings with these marginals, and define

\begin{equation*}
W_1^Q(\nu,\eta)
=\inf_{\gamma\in\Pi(\nu,\eta)}
\int_{Q\times Q}|x-y|\,d\gamma(x,y).
\end{equation*}

The classical duality theorem used here states that, on any compact metric space $(M,d)$, for Borel probability measures $\nu,\eta$,

\begin{equation}
\inf_{\gamma\in\Pi(\nu,\eta)}\int d(x,y)\,d\gamma
=\sup_{\operatorname{Lip}_d u\le1}
\int_Mu\,d(\nu-\eta).
\tag{KR}\label{eq:original-KR}
\end{equation}

Compactness gives finite first moments and bounds Lipschitz tests after fixing an additive constant. We use compact-space Kantorovich--Rubinstein duality; the original reference is Herbert~\cite{KRshape}, Theorem~3.6. Section~\ref{app:analytic} proves this compact version by finite-grid strong duality.

The easy direction is immediate: for any coupling $\gamma$ and unit Lipschitz function $u$,

\begin{equation*}
\begin{aligned}
\int_Q u\,d(\nu-\eta)
&=\int_{Q\times Q}(u(x)-u(y))\,d\gamma(x,y)\\
&\le\int_{Q\times Q}|x-y|\,d\gamma(x,y).
\end{aligned}
\end{equation*}

The first step uses the marginal conditions, and the second uses $u(x)-u(y)\le|x-y|$. Taking the supremum over $u$ and the infimum over $\gamma$ shows that the dual quantity is at most the transport cost; the converse is the cited classical theorem.

Since $(\nu-\eta)(Q)=0$, for every constant $c$,

\begin{equation*}
\int_Q(u-c)\,d(\nu-\eta)
=\int_Q u\,d(\nu-\eta)-c(\nu(Q)-\eta(Q))
=\int_Q u\,d(\nu-\eta).
\end{equation*}

Choosing $c=\int_Q u\,dx$ places any Lipschitz test in $\mathcal K$ without changing its Lipschitz constant or pairing. By (KR) and \eqref{eq:original-P1},

\begin{equation}
W_1^Q(\nu,\eta)=\mathcal S(\nu-\eta).
\tag{P3}\label{eq:original-P3}
\end{equation}

\subsubsection{Midrange translation makes the uniform bound inactive}\label{orig:II:2.2}

The original distance is

\begin{equation*}
d_{\mathrm{BL}}(\nu,\eta)
=\sup_{\substack{\|u\|_\infty\le1\\
\operatorname{Lip}_Q u\le1}}
\left|\int_Q u\,d(\nu-\eta)\right|.
\end{equation*}

Take any unit Lipschitz $u$ and set

\begin{equation*}
M=\max_Q u,\qquad m=\min_Q u,\qquad
c=\frac{M+m}{2},\qquad v=u-c.
\end{equation*}

Suppose the maximum and minimum are attained at $x_M,x_m$. Then

\begin{equation*}
0\le M-m=u(x_M)-u(x_m)
\le|x_M-x_m|\le\sqrt2.
\end{equation*}

Moreover,

\begin{equation*}
\max_Qv=M-c=\frac{M-m}{2},
\qquad
\min_Qv=m-c=-\frac{M-m}{2}.
\end{equation*}

Therefore

\begin{equation}
\|v\|_\infty=\frac{M-m}{2}
\le\frac{\sqrt2}{2}<1,\qquad
\operatorname{Lip}_Qv=\operatorname{Lip}_Qu\le1.
\tag{P4}\label{eq:original-P4}
\end{equation}

Subtracting a constant leaves the pairing with $\nu-\eta$ unchanged. Every pairing in the unrestricted Lipschitz dual is therefore realized by $v$ in the bounded-Lipschitz class, giving $W_1^Q\le d_{\mathrm{BL}}$. Conversely, bounded-Lipschitz tests form a subclass of unrestricted Lipschitz tests, so $d_{\mathrm{BL}}\le W_1^Q$. Together,

\begin{equation}
\boxed{d_{\mathrm{BL}}(\nu,\eta)
=W_1^Q(\nu,\eta)
=\mathcal S(\nu-\eta).}
\tag{P5}\label{eq:original-P5}
\end{equation}

Midrange translation enforces $\|v\|_\infty\le1$, while mean translation fixes the additive constant in $\mathcal K$. These normalizations need not hold simultaneously, since the source has zero mass.

\subsubsection{The seminorm properties used below}\label{orig:II:2.3}

Linearity of the pairing and \eqref{eq:original-P1} give, for real $c$,

\begin{equation*}
\mathcal S(cf)=|c|\mathcal S(f),
\qquad
\mathcal S(f+h)\le\mathcal S(f)+\mathcal S(h).
\end{equation*}

For $c\ge0$, factor $c$ out of the supremum; for $c<0$ also use $\mathcal S(-f)=\mathcal S(f)$. The second inequality follows from $\langle f+h,u\rangle\le\mathcal S(f)+\mathcal S(h)$ for each test and then taking the supremum. Only these properties are needed, not positive definiteness on all finite signed measures.

\subsection{The real Neumann basis: normalization, completeness, and gradients}\label{orig:II:3}

\subsubsection{One-dimensional orthonormality}\label{orig:II:3.1}

Define

\begin{equation*}
e_0(x)=1,\qquad e_m(x)=\sqrt2\cos(\pi mx)\quad(m\ge1).
\end{equation*}

For integers $r\ge1$,

\begin{equation*}
\int_0^1\cos(\pi rx)\,dx
=\left[\frac{\sin(\pi rx)}{\pi r}\right]_0^1=0.
\end{equation*}

If $m,n\ge1$ and $m\ne n$, the product-to-sum identity gives

\begin{equation*}
\begin{aligned}
\int_0^1\cos(\pi mx)\cos(\pi nx)\,dx
&=\frac12\int_0^1
\bigl[\cos(\pi(m-n)x)+\cos(\pi(m+n)x)\bigr]\,dx\\
&=0.
\end{aligned}
\end{equation*}

If $m=n\ge1$, the identity $\cos^2a=(1+\cos2a)/2$ gives

\begin{equation*}
\int_0^1\cos^2(\pi mx)\,dx=\frac12.
\end{equation*}

Also $\int_0^1e_0^2=1$, hence for every $m,n\in\mathbb N_0$,

\begin{equation}
\int_0^1e_m(x)e_n(x)\,dx=\boldsymbol1_{\{m=n\}}.
\tag{P6}\label{eq:original-P6}
\end{equation}

\subsubsection{The two-dimensional basis, its mean, and the boundary}\label{orig:II:3.2}

Set

\begin{equation*}
\mathcal I=\mathbb N_0^2\setminus\{(0,0)\},
\qquad
s(k)=\#\{j:k_j>0\},
\end{equation*}

\begin{equation*}
\varphi_k(x)=e_{k_1}(x_1)e_{k_2}(x_2)
=2^{s(k)/2}\cos(\pi k_1x_1)\cos(\pi k_2x_2),
\qquad
\lambda_k=\pi^2(k_1^2+k_2^2).
\end{equation*}

Temporarily include the constant $\varphi_0=1$ in the full basis. Finite-product Fubini and \eqref{eq:original-P6} give

\begin{equation}
\int_Q\varphi_k\varphi_l\,dx
=\left(\int_0^1e_{k_1}e_{l_1}\right)
\left(\int_0^1e_{k_2}e_{l_2}\right)
=\boldsymbol1_{\{k=l\}}.
\tag{P7}\label{eq:original-P7}
\end{equation}

All these functions are continuous and bounded, so Fubini's absolute-integrability condition holds automatically. If $k\ne0$, at least one coordinate has positive frequency; separating the integrals likewise gives $\int_Q\varphi_k=0$.

Differentiation yields

\begin{equation*}
-\Delta\varphi_k
=\pi^2(k_1^2+k_2^2)\varphi_k
=\lambda_k\varphi_k.
\end{equation*}

On the vertical sides $x_1=0,1$,

\begin{equation*}
\partial_1\varphi_k
=-2^{s(k)/2}\pi k_1
\sin(\pi k_1x_1)\cos(\pi k_2x_2)=0;
\end{equation*}

Similarly, on the horizontal sides $x_2=0,1$ we have $\partial_2\varphi_k=0$. Thus every finite spectral potential has zero normal derivative. Corners have no unique outward normal but form a boundary-length null set and contribute no additional boundary integral.

Pointwise we also have

\begin{equation}
\begin{aligned}
|\nabla\varphi_k(x)|^2
&=2^{s(k)}\pi^2\left[
k_1^2\sin^2(\pi k_1x_1)\cos^2(\pi k_2x_2)
+k_2^2\cos^2(\pi k_1x_1)\sin^2(\pi k_2x_2)\right]\\
&\le2^{s(k)}\pi^2(k_1^2+k_2^2)
\le4\lambda_k.
\end{aligned}
\tag{P8}\label{eq:original-P8}
\end{equation}

The first bound uses that each sine or cosine has square at most one; the second uses $s(k)\le2$. This gives the explicit pointwise bound $4\lambda_k$.

\subsubsection{Gradient orthogonality}\label{orig:II:3.3}

For $m,n\ge1$, the same product-to-sum calculation for sines gives

\begin{equation*}
\int_0^1\sin(\pi mx)\sin(\pi nx)\,dx
=\frac12\boldsymbol1_{\{m=n\}}.
\end{equation*}

Since $e_m'=-\sqrt2\pi m\sin(\pi mx)$,

\begin{equation*}
\int_0^1e_m'e_n'\,dx
=\pi^2m^2\boldsymbol1_{\{m=n\}},
\end{equation*}

If either index is zero, both sides vanish. Tensor-product integration of the two-dimensional gradient gives

\begin{equation}
\begin{aligned}
\int_Q\nabla\varphi_k\cdot\nabla\varphi_l\,dx
&=\left(\int e_{k_1}'e_{l_1}'\right)
\left(\int e_{k_2}e_{l_2}\right)
+\left(\int e_{k_1}e_{l_1}\right)
\left(\int e_{k_2}'e_{l_2}'\right)\\
&=\lambda_k\boldsymbol1_{\{k=l\}}.
\end{aligned}
\tag{P9}\label{eq:original-P9}
\end{equation}

One may also obtain this identity from $-\Delta\varphi_l=\lambda_l\varphi_l$ by integration by parts with zero normal derivative; the direct calculation checks all normalization factors.

\subsubsection{\texorpdfstring{The required $H^1$ spectral expansion}{The required H1 spectral expansion}}\label{orig:II:3.4}

Completeness of the cosine tensor basis in $L^2(Q)$, density of finite spectral projections in $H^1(Q)$, and the Dirichlet-form identity follow from even reflection and periodic Fourier Parseval. Section~\ref{app:analytic} identifies these constructions with ordinary Sobolev weak gradients; here we compute the norms explicitly.

For $u\in H^1(Q)$ set

\begin{equation*}
u_k=\int_Qu\varphi_k\,dx.
\end{equation*}

For increasing finite spectral sets $A$, orthogonality of the finite projection $u_A=\sum_{k\in A}u_k\varphi_k$ and \eqref{eq:original-P9} give

\begin{equation*}
\|u_A\|_2^2=\sum_{k\in A}|u_k|^2,
\qquad
\|\nabla u_A\|_2^2
=\sum_{\substack{k\in A\\k\ne0}}\lambda_k|u_k|^2.
\end{equation*}

By completeness and $H^1$ density, $u_A\to u$ in $H^1$. Norm continuity makes the left sides converge to $\|u\|_2^2$ and $\|\nabla u\|_2^2$; the right sides are increasing partial sums of nonnegative series. Thus

\begin{equation}
\|u\|_2^2=\sum_{k\in\mathbb N_0^2}|u_k|^2,\qquad
\|\nabla u\|_2^2=\sum_{k\in\mathcal I}\lambda_k|u_k|^2.
\tag{P10}\label{eq:original-P10}
\end{equation}

The limits of these nonnegative partial sums are the defining series limits; no integration against a singular measure is interchanged here.

For mean-zero $u$ we have $u_0=0$, and the first positive eigenvalue is $\pi^2$. Thus \eqref{eq:original-P10} immediately gives the explicit Poincar\'e inequality

\begin{equation}
\|u\|_2^2
=\sum_{k\in\mathcal I}|u_k|^2
\le\frac1{\pi^2}\sum_{k\in\mathcal I}\lambda_k|u_k|^2
=\frac1{\pi^2}\|\nabla u\|_2^2.
\tag{P11}\label{eq:original-P11}
\end{equation}

All finite cosine projections converge in $H^1$ because the weighted square sum in \eqref{eq:original-P10} is finite. The squared $H^1$ norm of a tail is exactly the sum of the two corresponding series tails.

\subsection{\texorpdfstring{$H_N^{-1}$ sources, weak no-flux solutions, and Lipschitz energy bounds}{Negative Sobolev sources, weak no-flux solutions, and energy bounds}}\label{orig:II:4}

\subsubsection{The spectral negative norm and actual pairing}\label{orig:II:4.1}

For a mean-zero $L^2(Q)$ source, write $f_k=\int_Qf\varphi_k\,dx$ and define the spectral negative norm

\begin{equation}
\|f\|_{H_N^{-1}}^2
=\sum_{k\in\mathcal I}\frac{|f_k|^2}{\lambda_k}.
\tag{P13}\label{eq:original-P13}
\end{equation}

Whenever this norm is used, the spectral sum is assumed finite. Denote its square root by $r$. For $u\in H^1(Q)$, the identity \eqref{eq:original-P10} and weighted Cauchy--Schwarz give

\begin{equation}
\sum_k|f_ku_k|
\le
\left(\sum_k\frac{|f_k|^2}{\lambda_k}\right)^{1/2}
\left(\sum_k\lambda_k|u_k|^2\right)^{1/2}
=r\|\nabla u\|_2.
\tag{P12}\label{eq:original-P12}
\end{equation}

This is the weighted spectral pairing actually needed. For the present $L^2$ sources, completeness of the real basis identifies it with the ordinary integral. Infinite limits retain convergence of the source in $L^2$; convergence of the flux norm alone is not used to identify an arbitrary singular measure.

\subsubsection{Construction of the Poisson potential and flux}\label{orig:II:4.2}

Set

\begin{equation*}
v_A=\sum_{k\in A}\frac{f_k}{\lambda_k}\varphi_k.
\end{equation*}

Function and gradient orthogonality give

\begin{equation*}
\|v_A-v_B\|_2^2
=\sum_{k\in A\triangle B}\frac{|f_k|^2}{\lambda_k^2},
\qquad
\|\nabla v_A-\nabla v_B\|_2^2
=\sum_{k\in A\triangle B}\frac{|f_k|^2}{\lambda_k},
\end{equation*}

The expression compares the same increasing truncation. For general sets their common part cancels and the signs of the remaining coefficients do not affect squares. Also $\lambda_k\ge\pi^2$, so

\begin{equation*}
\sum_k\frac{|f_k|^2}{\lambda_k^2}
\le\frac1{\pi^2}\sum_k\frac{|f_k|^2}{\lambda_k}<\infty.
\end{equation*}

The finite potentials are therefore Cauchy in $H^1$ and converge to the unique mean-zero potential

\begin{equation*}
v=(-\Delta_N)^{-1}f,
\qquad
J_N(f)=\nabla v\in L^2(Q;\mathbb R^2).
\end{equation*}

For a finite spectral set $f_A=\sum_{k\in A}f_k\varphi_k$,

\begin{equation*}
-\operatorname{div}J_N(f_A)=f_A,\qquad
J_N(f_A)\cdot\nu=0
\end{equation*}

holds on each side of the square. Hence the sign in integration by parts is

\begin{equation}
\begin{aligned}
\int_Quf_A\,dx
&=-\int_Qu\,\operatorname{div}J_N(f_A)\,dx\\
&=\int_QJ_N(f_A)\cdot\nabla u\,dx
-\int_{\partial Q}u\,J_N(f_A)\cdot\nu\,d\sigma\\
&=\int_QJ_N(f_A)\cdot\nabla u\,dx.
\end{aligned}
\tag{P14}\label{eq:original-P14}
\end{equation}

This identity holds for $H^1$ tests. Approximate by smooth tests in $H^1$: the finite smooth source gives $L^2$ continuity on the left, and the finite flux gives $L^2$ continuity of the gradient pairing on the right. Thus the zero-boundary-term identity extends from smooth tests to every $H^1$ test.

Let $A$ increase to cover the indices. The error on the right satisfies

\begin{equation*}
\left|\int_Q(J_N(f)-J_N(f_A))\cdot\nabla u\right|
\le\|J_N(f)-J_N(f_A)\|_2\,\|\nabla u\|_2\longrightarrow0;
\end{equation*}

The left side converges too, since the current $f_A\to f$ belongs to $L^2(Q)$ and

\begin{equation*}
\left|\int_Q(f-f_A)u\,dx\right|
\le\|f-f_A\|_2\|u\|_2\longrightarrow0.
\end{equation*}

The weighted tail bound in \eqref{eq:original-P12} also controls the spectral pairing uniformly. Therefore

\begin{equation}
\boxed{\langle f,u\rangle=\int_QJ_N(f)\cdot\nabla u\,dx
\quad(u\in H^1(Q)).}
\tag{P15}\label{eq:original-P15}
\end{equation}

The identity \eqref{eq:original-P15} expresses the weak no-flux boundary condition for the sources used here. We do not assert a pointwise normal trace for a general $L^2$ flux. The $L^2$ limits of the sources and finite no-flux fields, together with the identity for every $H^1$ test, establish the actual weak boundary condition.

Passing the finite spectral gradient-norm identity through $L^2$ convergence gives

\begin{equation}
\boxed{\|J_N(f)\|_2=\|f\|_{H_N^{-1}}.}
\tag{P16}\label{eq:original-P16}
\end{equation}

\subsubsection{Lipschitz tests and energy stability}\label{orig:II:4.3}

Rademacher's theorem and the weak-derivative characterization of Lipschitz functions imply that a unit Lipschitz $u$ belongs to $W^{1,\infty}(Q)$ and satisfies $|\nabla u|\le1$ almost everywhere. The gradient has its Euclidean norm: at a differentiability point of $u$, for each unit vector $e$,

\begin{equation*}
|\nabla u(x)\cdot e|
=\lim_{h\to0}\frac{|u(x+he)-u(x)|}{|h|}
\le1,
\end{equation*}

Taking the supremum over directions gives $|\nabla u(x)|\le1$. Hence

\begin{equation*}
\|\nabla u\|_2^2=\int_Q|\nabla u|^2\,dx\le|Q|=1.
\end{equation*}

By \eqref{eq:original-P15}, Cauchy--Schwarz, and \eqref{eq:original-P16},

\begin{equation*}
|\langle f,u\rangle|
\le\|J_N(f)\|_2\|\nabla u\|_2
\le\|f\|_{H_N^{-1}}.
\end{equation*}

Taking the supremum over $\mathcal K$ yields

\begin{equation}
\boxed{\mathcal S(f)\le\|J_N(f)\|_2
=\|f\|_{H_N^{-1}}.}
\tag{P17}\label{eq:original-P17}
\end{equation}

For the mean-zero $L^2$ sources used here, the integral of $\mathcal S$ equals the continuous spectral pairing above. For such $f,h$ and each $u\in\mathcal K$,

\begin{equation*}
\langle f,u\rangle
=\langle h,u\rangle+\langle f-h,u\rangle
\le\mathcal S(h)+\mathcal S(f-h).
\end{equation*}

Take the supremum, interchange $f,h$, and use \eqref{eq:original-P1} to obtain

\begin{equation}
\boxed{|\mathcal S(f)-\mathcal S(h)|
\le\mathcal S(f-h)
\le\|J_N(f-h)\|_2.}
\tag{P18}\label{eq:original-P18}
\end{equation}

The same argument applies to any subclass of unit Lipschitz tests on the square, including zero-boundary tests. A test need not have mean zero: subtracting its mean leaves the pairing unchanged because the source annihilates constants.

\subsubsection{The scope of periodic flux pairing}\label{orig:II:4.4}

The unit torus is $\mathbb T^2=\mathbb R^2/\mathbb Z^2$, again with area one, and metric

\begin{equation*}
d_T(x,y)=\min_{z\in\mathbb Z^2}|x-y-z|.
\end{equation*}

Its physical Fourier frequencies are $k\in2\pi\mathbb Z^2$ and its positive eigenvalues are $|k|^2$. Choosing one representative of each $\{k,-k\}$, the functions $\sqrt2\cos(k\cdot x)$ and $\sqrt2\sin(k\cdot x)$ form a real orthonormal basis, as follows by expanding $\int_Qe^{i(k-l)\cdot x}\,dx=\boldsymbol1_{\{k=l\}}$. Each frequency pair contributes two real modes, so signed-frequency lattice counting equals real spectral multiplicity.

Define

\begin{equation*}
\mathcal K_T=\{u\in C(\mathbb T^2):
\operatorname{Lip}_{d_T}u\le1,\ \int_Qu\,dx=0\},
\qquad
\mathcal T(f)=\sup_{u\in\mathcal K_T}\int_Quf\,dx.
\end{equation*}

Periodic weak integration has no boundary term. Thus, for $J_T(f)=\nabla(-\Delta_T)^{-1}f$ and periodic tests,

\begin{equation}
\mathcal T(f)\le\|J_T(f)\|_2,\qquad
|\mathcal T(f)-\mathcal T(h)|\le\|J_T(f-h)\|_2.
\tag{P19}\label{eq:original-P19}
\end{equation}

The Lipschitz gradient bound is still one, and the negative Sobolev construction via Fourier Parseval agrees term by term with the preceding construction. \textbf{\eqref{eq:original-P19} applies only to periodic tests.} Pairing a periodic flux on the square with an arbitrary nonperiodic $u$ generally leaves a normal boundary term that \eqref{eq:original-P19} cannot remove.

\subsection{Spectral counting, Stieltjes integration, and traces for all real parameters}\label{orig:II:5}

\subsubsection{Geometric covering of a lattice disk}\label{orig:II:5.1}

Let

\begin{equation*}
B(R)=\#\{z\in\mathbb Z^2:|z|\le R\},\qquad R\ge0.
\end{equation*}

For each $z\in\mathbb Z^2$ take the unit cell $C_z=z+[-1/2,1/2)^2$. These cells are disjoint and cover the plane up to their area-zero boundaries. Let $c_0=\sqrt2/2$. If $x\in C_z$, then $|x-z|\le c_0$.

Let $U_R=\bigcup_{|z|\le R}C_z$, whose area is exactly $B(R)$. The triangle inequality gives

\begin{equation*}
U_R\subseteq\{x:|x|\le R+c_0\}.
\end{equation*}

Conversely, if $|x|\le R-c_0$ and $z$ is the center of its cell, then $|z|\le|x|+c_0\le R$ and hence $x\in U_R$. For $R<c_0$ use only the empty inner disk. Thus

\begin{equation*}
\pi\max(R-c_0,0)^2\le B(R)\le\pi(R+c_0)^2.
\end{equation*}

Expanding the two squares, for $R\ge c_0$,

\begin{equation*}
|B(R)-\pi R^2|\le2\pi c_0R+\pi c_0^2;
\end{equation*}

For $0\le R<c_0$ the error also has a fixed constant bound. Therefore, for every real $R\ge0$,

\begin{equation}
B(R)=\pi R^2+O(R+1).
\tag{P20}\label{eq:original-P20}
\end{equation}

This covering argument includes noninteger radii and lattice points exactly on the circle. No unspecified subsequence is used.

\subsubsection{The Neumann quadrant and the full periodic lattice}\label{orig:II:5.2}

Let $I(R)$ count lattice points with $k_1,k_2\ge1$ and $|k|\le R$. Each of the four open quadrants contributes $I(R)$, each of the four half-axes contributes $\lfloor R\rfloor$, and there is the origin. Hence

\begin{equation*}
B(R)=4I(R)+4\lfloor R\rfloor+1.
\end{equation*}

The number in the nonnegative quadrant excluding the origin is

\begin{equation*}
I(R)+2\lfloor R\rfloor
=\frac{B(R)-1}{4}+\lfloor R\rfloor.
\end{equation*}

For Neumann eigenvalues $\lambda_k=\pi^2|k|^2$ the radius is $R=\sqrt a/\pi$, so

\begin{equation}
\begin{aligned}
N_N(a)
&=\#\{k\in\mathcal I:\pi^2|k|^2\le a\}\\
&=\frac14\left(\pi\frac a{\pi^2}\right)+O(\sqrt a+1)
=\frac a{4\pi}+O(\sqrt a+1).
\end{aligned}
\tag{P21}\label{eq:original-P21}
\end{equation}

The axis contribution is only $O(\sqrt a+1)$. Its inverse-eigenvalue trace is even uniformly finite:

\begin{equation*}
2\sum_{1\le j\le R}\frac1{\pi^2j^2}
\le\frac2{\pi^2}\left(1+\int_1^\infty x^{-2}\,dx\right)
=\frac4{\pi^2}.
\end{equation*}

The axis terms therefore add no logarithmic coefficient.

For the unit torus write $k=2\pi z$ and $z\in\mathbb Z^2\setminus\{0\}$; the radius is $R=\sqrt a/(2\pi)$. Counting the actual real multiplicity gives

\begin{equation}
\begin{aligned}
N_T(a)
&=B\left(\frac{\sqrt a}{2\pi}\right)-1\\
&=\pi\frac a{4\pi^2}+O(\sqrt a+1)
=\frac a{4\pi}+O(\sqrt a+1).
\end{aligned}
\tag{P22}\label{eq:original-P22}
\end{equation}

The first positive eigenvalues are $\pi^2$ and $4\pi^2$. The counts vanish below them; above them $\sqrt a+1\le C a$, so \eqref{eq:original-P21}--\eqref{eq:original-P22} give

\begin{equation}
N_D(a)\le C a\quad\text{for every }a>0,\quad D=N,T.
\tag{P23}\label{eq:original-P23}
\end{equation}

All constants are independent of the real parameter $a$.

\subsubsection{Stieltjes integration by parts for all real parameters}\label{orig:II:5.3}

Define the positive-spectrum inverse trace

\begin{equation*}
m_D(L)=\sum_{0<\lambda_j\le L}\frac1{\lambda_j}.
\end{equation*}

Fix $0<a_0<\pi^2$ so that both models have no positive spectrum below $a_0$. Use the right-continuous convention $N_D(a)=\#\{\lambda_j\le a\}$ for counting. Formally,

\begin{equation*}
m_D(L)=\int_{(a_0,L]}\frac1a\,dN_D(a).
\end{equation*}

To account fully for endpoints, apply to each $\lambda_j\le L$ the identity

\begin{equation*}
\frac1{\lambda_j}
=\frac1L+\int_{\lambda_j}^{L}\frac{da}{a^2}.
\end{equation*}

Indeed, the right side is $1/L+[-1/a]_{\lambda_j}^L=1/\lambda_j$. Sum over the finite spectrum and interchange the finite sum and integral:

\begin{equation}
\begin{aligned}
m_D(L)
&=\frac{N_D(L)}L
+\int_{a_0}^{L}\frac{\#\{j:\lambda_j\le a\}}{a^2}\,da\\
&=\frac{N_D(L)}L+\int_{a_0}^L\frac{N_D(a)}{a^2}\,da.
\end{aligned}
\tag{P24}\label{eq:original-P24}
\end{equation}

The value at an individual integration endpoint does not affect the Lebesgue integral. If $L$ is itself an eigenvalue, $N_D(L)/L$ includes that mode exactly. Thus \eqref{eq:original-P24} holds for every real $L\ge a_0$, not merely between jumps.

Write $N_D(a)=a/(4\pi)+R_D(a)$ with $|R_D(a)|\le C(\sqrt a+1)$. Substitution into \eqref{eq:original-P24} gives

\begin{equation*}
m_D(L)=\frac1{4\pi}+\frac{R_D(L)}L
+\frac{\log L-\log a_0}{4\pi}
+\int_{a_0}^L\frac{R_D(a)}{a^2}\,da.
\end{equation*}

The boundary error satisfies

\begin{equation*}
\left|\frac{R_D(L)}L\right|
\le C(L^{-1/2}+L^{-1}),
\end{equation*}

The integral error satisfies

\begin{equation*}
\begin{aligned}
\left|\int_{a_0}^L\frac{R_D(a)}{a^2}\,da\right|
&\le C\int_{a_0}^L(a^{-3/2}+a^{-2})\,da\\
&\le C\left(2a_0^{-1/2}+a_0^{-1}\right).
\end{aligned}
\end{equation*}

These steps use the counting-error bound, the integral triangle inequality, and the explicit antiderivative. Since $a_0$ is fixed,

\begin{equation}
\boxed{m_D(L)=\frac{\log L}{4\pi}+O(1)
\quad(D=N,T)}
\tag{P25}\label{eq:original-P25}
\end{equation}

uniformly for every real $L\to\infty$. In particular,

\begin{equation}
m_N(L)\to\infty,\qquad
m_T(L)-m_N(L)=O(1),\qquad
\frac{m_T(L)}{m_N(L)}=1+O((\log L)^{-1}).
\tag{P26}\label{eq:original-P26}
\end{equation}

The cutoff conversions follow as well: integer Neumann indices satisfy $|k|\le\sqrt L/\pi$, while periodic physical frequencies satisfy $|k|\le\sqrt L$. If an external periodic result uses the wave-number cutoff $\Lambda$, then $\Lambda=\sqrt L$, rather than $\Lambda=L$.

\subsubsection{The spectral derivative trace needed in the proof}\label{orig:II:5.4}

For $0<u\le1$ split the positive spectrum into half-open intervals

\begin{equation*}
\left[\frac j u,\frac{j+1}u\right),\qquad j\in\mathbb N_0.
\end{equation*}

On such a band,

\begin{equation*}
\lambda e^{-2u\lambda}
\le\frac{j+1}u e^{-2j},
\qquad
\#\left\{\lambda<\frac{j+1}u\right\}
\le N_D\left(\frac{j+1}u\right)
\le C\frac{j+1}u.
\end{equation*}

Consequently

\begin{equation}
\boxed{\sum_{\lambda>0}\lambda e^{-2u\lambda}
\le\frac C{u^2}
\sum_{j\ge0}(j+1)^2e^{-2j}
\le C'u^{-2}.}
\tag{P27}\label{eq:original-P27}
\end{equation}

The final numerical series converges because exponential decay dominates polynomial growth; its successive-term ratio tends to $e^{-2}<1$. All terms are nonnegative, so regrouping into bands is valid.

For fixed-time smoothness of the actual heat source, for each fixed $t>0$ and each nonnegative integer $r$,

\begin{equation}
\sum_{\lambda>0}(1+\lambda)^r e^{-2t\lambda}<\infty.
\tag{P28}\label{eq:original-P28}
\end{equation}

Here is the band calculation. For $\lambda\in[j,j+1)$ we have $(1+\lambda)^r\le(j+2)^r$ and $e^{-2t\lambda}\le e^{-2tj}$, and the band contains at most $N_D(j+1)\le C(j+1)$ modes. The spectral sum is therefore at most

\begin{equation*}
C\sum_{j\ge0}(j+1)(j+2)^r e^{-2tj},
\end{equation*}

The successive-term ratio tends to $e^{-2t}<1$, so this series converges. Regrouping again concerns nonnegative terms. No optimal bound uniform in every real derivative order and every heat time is asserted.

The corresponding Gaussian heat source has the following expansion: in the real eigenbasis $(\psi_j,\lambda_j)$ of the given geometry, take independent $G_j\sim N(0,1)$ and set

\begin{equation*}
\eta_D(t)=\sum_{\lambda_j>0}e^{-t\lambda_j}G_j\psi_j.
\end{equation*}

In the $L^2$ space of $L^2(Q)$-valued random variables, the expected squared norm of a finite projection tail is exactly $\sum_{\text{tail}}e^{-2t\lambda_j}$, which tends to zero by \eqref{eq:original-P28}. Derivatives of the explicit trigonometric basis and the same integer-order weighted sums also yield mean-square Sobolev convergence at each fixed order and a measurable smooth version. We use this positive-time version below; common-noise coupling across the domains is constructed in the comparison section.

\subsection{Reflected heat flow: kernel, folding, generator, and transport bounds}\label{orig:II:6}

\subsubsection{Heat-kernel scale and Gaussian normalization}\label{orig:II:6.1}

In one dimension set

\begin{equation*}
G_t(z)=(4\pi t)^{-1/2}e^{-z^2/(4t)},\qquad t>0.
\end{equation*}

First check normalization explicitly. Let $I=\int_0^\infty e^{-s^2}\,ds$; Gaussian tails ensure $I<\infty$. Nonnegative Tonelli and polar coordinates in the quadrant give

\begin{equation*}
I^2=\int_0^{\pi/2}\int_0^\infty e^{-r^2}r\,dr\,d\theta
=\frac\pi2\cdot\frac12=\frac\pi4,
\qquad I=\frac{\sqrt\pi}{2}.
\end{equation*}

Using $z=2\sqrt t\,s$ gives

\begin{equation*}
\int_{\mathbb R}G_t(z)\,dz
=\frac1{2\sqrt{\pi t}}\,(2\sqrt t)
\int_{\mathbb R}e^{-s^2}\,ds
=\frac{2I}{\sqrt\pi}=1.
\end{equation*}

Thus this is the density of $N(0,2t)$. Its second moment follows directly from $G_t'(z)=-zG_t(z)/(2t)$ and one-dimensional integration by parts:

\begin{equation*}
\int_{\mathbb R}z^2G_t(z)\,dz
=-2t\int_{\mathbb R}zG_t'(z)\,dz
=2t\int_{\mathbb R}G_t(z)\,dz=2t,
\end{equation*}

Gaussian tails make the endpoint term $zG_t(z)\to0$.

The two-dimensional free Gaussian kernel is $(4\pi t)^{-1}e^{-|z|^2/(4t)}$ with covariance $2tI_2$. The Neumann heat multiplier is $e^{-t\lambda_k}$. We therefore use $P_t^N=e^{t\Delta_N}$, with generator sign $\Delta_N$. Only this kernel and its multipliers are needed, not a full description of the generator domain.

To check the heat-flow convention, we compute the fixed-time Gaussian Fourier integral in full.

Its characteristic function is obtained without omitting steps. Set

\begin{equation*}
\chi(a)=\int_{\mathbb R}e^{iaz}G_t(z)\,dz.
\end{equation*}

Since $|z|G_t(z)$ is integrable, dominated convergence permits differentiation of $a$. Integration by parts then gives

\begin{equation*}
\begin{aligned}
\chi'(a)
&=i\int_{\mathbb R}z e^{iaz}G_t(z)\,dz\\
&=-2ti\int_{\mathbb R}e^{iaz}G_t'(z)\,dz\\
&=-2ti(-ia)\int_{\mathbb R}e^{iaz}G_t(z)\,dz
=-2ta\chi(a).
\end{aligned}
\end{equation*}

Gaussian tails kill the endpoint term. Solving this first-order ODE with $\chi(0)=1$ gives

\begin{equation}
\chi(a)=e^{-ta^2}.
\tag{P29}\label{eq:original-P29}
\end{equation}

The Gaussian convolution therefore has Fourier multiplier $e^{-ta^2}$ and generator $\Delta$. Standard Brownian motion with covariance $t$ has generator $\Delta/2$; the reflected flow here uses covariance $2t$.

\subsubsection{The explicit folding map}\label{orig:II:6.2}

For every integer $j$ define

\begin{equation*}
\rho(z)=
\begin{cases}
z-2j,&2j\le z\le2j+1,\\
2j+2-z,&2j+1\le z\le2j+2.
\end{cases}
\end{equation*}

The two branches both equal one at odd integers, and adjacent periods both equal zero at even integers. Hence $\rho:\mathbb R\to[0,1]$ is continuous, even, two-periodic, and the identity on $[0,1]$. Split any bounded interval at integers. On each piece the slope has absolute value one, so variation is at most the piece length. Summation gives

\begin{equation}
|\rho(z)-\rho(z')|\le|z-z'|
\quad(z,z'\in\mathbb R).
\tag{P30}\label{eq:original-P30}
\end{equation}

For every integer $m\ge0$ we also have

\begin{equation}
\cos(\pi m\rho(z))=\cos(\pi mz).
\tag{P31}\label{eq:original-P31}
\end{equation}

On the first branch $\rho(z)=z-2j$, and the cosine phase differs by $2\pi mj$. On the second $\rho(z)=2j+2-z$, and cosine evenness and integer periodicity again give the same value.

Let $B_t$ be two-dimensional Brownian motion with covariance $2tI_2$; equivalently $B_t=\sqrt2\,\widetilde B_t$ with standard Brownian motion $\widetilde B_t$. Its coordinates are independent and each has density $G_t$. Define

\begin{equation}
Y_t(x)=
\bigl(\rho(x_1+B_t^1),\rho(x_2+B_t^2)\bigr).
\tag{P32}\label{eq:original-P32}
\end{equation}

We use only this fixed-time law and the synchronous coupling furnished by the same $B_t$. A whole reflected Markov process need not be constructed.

\subsubsection{The image kernel and identification with the Neumann spectrum}\label{orig:II:6.3}

For $y\in(0,1)$ its preimages are $\{2j+y,2j-y:j\in\mathbb Z\}$, each branch having Jacobian of absolute value one. Thus the density of $\rho(x+B_t^1)$ is

\begin{equation}
\begin{aligned}
a_t(x,y)
&=\sum_{j\in\mathbb Z}
\bigl[G_t(2j+y-x)+G_t(2j-y-x)\bigr]\\
&=\sum_{j\in\mathbb Z}
\bigl[G_t(x-y+2j)+G_t(x+y+2j)\bigr].
\end{aligned}
\tag{P33}\label{eq:original-P33}
\end{equation}

The second line uses evenness of $G_t$ and reindexing $j\mapsto-j$. The endpoints $y=0,1$ have countably many preimages, to which the Gaussian law assigns no atoms; they do not affect density or mass.

Tonelli permits summing and integrating these nonnegative image terms. Their total mass is one, since they give the folded Gaussian density. At fixed $t>0$, distant direct images are at least $2|j|-1$ away and distant reflected images at least $2|j|-2$ away. Summable Gaussian-tail envelopes give absolute uniform convergence on $x,y\in[0,1]$. Independence of the two coordinates gives the two-dimensional product formula.

Both one-dimensional image kernels and their Gaussian tails are absolutely summable. The kernel is symmetric in $x,y$: reflected terms depend on $x+y$, and swapping $x,y$ in direct terms is reversed by Gaussian evenness and $j\mapsto-j$. It is even and two-periodic in $x$.

Independence of the Gaussian coordinates gives the two-dimensional kernel

\begin{equation}
p_N(t,x,y)=a_t(x_1,y_1)a_t(x_2,y_2).
\tag{P34}\label{eq:original-P34}
\end{equation}

Each kernel row has mass one and $p_N(t,x,y)=p_N(t,y,x)$. We now compute its cosine heat multipliers directly. For the one-dimensional basis, \eqref{eq:original-P31} and \eqref{eq:original-P29} give

\begin{equation*}
\begin{aligned}
\mathbb E e_m(\rho(x+B_t^1))
&=\sqrt2\,\mathbb E\cos(\pi m(x+B_t^1))\\
&=e^{-t\pi^2m^2}e_m(x)\quad(m\ge1).
\end{aligned}
\end{equation*}

The constant mode $e_0=1$ is preserved. Multiplying the two coordinates yields

\begin{equation}
\int_Qp_N(t,x,y)\varphi_k(y)\,dy
=e^{-t\lambda_k}\varphi_k(x).
\tag{P35}\label{eq:original-P35}
\end{equation}

The kernel operator contracts $L^2$: probability Jensen gives

\begin{equation*}
|P_t^Nu(x)|^2
\le\int_Qp_N(t,x,y)|u(y)|^2\,dy.
\end{equation*}

Integrate over $x$ and use Tonelli together with column mass one, which follows from symmetry:

\begin{equation*}
\|P_t^Nu\|_2^2\le\int_Q|u(y)|^2\,dy.
\end{equation*}

The kernel and spectral operators agree on the complete cosine basis and are bounded $L^2$ operators. They therefore agree on all of $L^2(Q)$:

\begin{equation}
P_t^Nu
=u_0+\sum_{k\in\mathcal I}e^{-t\lambda_k}u_k\varphi_k.
\tag{P36}\label{eq:original-P36}
\end{equation}

The identity $P_t^NP_s^N=P_{t+s}^N$ follows because, on finite expansions, the two multipliers multiply to $e^{-(t+s)\lambda_k}$; pass to the limit by $L^2$ contraction. This establishes the heat-semigroup spectral calculation used here, without a separate theorem on the full generator domain or strong continuity.

\subsubsection{Self-adjointness and invariance of area measure}\label{orig:II:6.4}

For bounded $u,v$ the double integral of $|u(x)v(y)|p_N(t,x,y)$ is at most $\|u\|_\infty\|v\|_\infty$, so Fubini applies. Kernel symmetry gives

\begin{equation*}
\int_Qu\,P_t^Nv\,dx
=\int_Qv\,P_t^Nu\,dx.
\end{equation*}

Approximate general $L^2$ functions by bounded truncations in $L^2$. Operator contraction and Cauchy--Schwarz pass the identity to the limit. Thus $P_t^N$ is self-adjoint.

Column mass one gives

\begin{equation}
\int_QP_t^Nu\,dx=\int_Qu\,dx,
\qquad P_t^N1=1.
\tag{P37}\label{eq:original-P37}
\end{equation}

For integrable $u$ the first identity follows by Tonelli/Fubini applied to $|u(y)|p_N(t,x,y)$. Hence $P_t^N$ preserves Lebesgue probability measure and mean-zero normalization.

\subsubsection{Lipschitz and source contraction}\label{orig:II:6.5}

For $x,y\in Q$ use the same Gaussian vector $B_t$. By \eqref{eq:original-P30},

\begin{equation*}
\begin{aligned}
|Y_t(x)-Y_t(y)|^2
&=\sum_{j=1}^2
|\rho(x_j+B_t^j)-\rho(y_j+B_t^j)|^2\\
&\le\sum_{j=1}^2|x_j-y_j|^2=|x-y|^2.
\end{aligned}
\end{equation*}

If $\operatorname{Lip}_Qu\le1$,

\begin{equation}
\begin{aligned}
|P_t^Nu(x)-P_t^Nu(y)|
&=\left|\mathbb E[u(Y_t(x))-u(Y_t(y))]\right|\\
&\le\mathbb E|u(Y_t(x))-u(Y_t(y))|\\
&\le\mathbb E|Y_t(x)-Y_t(y)|\le|x-y|.
\end{aligned}
\tag{P38}\label{eq:original-P38}
\end{equation}

The four steps use the single-time kernel representation, the expectation triangle inequality, the original Lipschitz condition, and synchronous folding contraction. Together with \eqref{eq:original-P37} they give $P_t^N\mathcal K\subseteq\mathcal K$.

For a mean-zero finite signed measure $f$ define $P_t^Nf$ to have density $x\mapsto\int_Qp_N(t,x,y)\,df(y)$. It remains mean zero. For bounded tests $u$,

\begin{equation*}
\int_Qu\,d(P_t^Nf)=\int_QP_t^Nu\,df.
\end{equation*}

The interchange is valid because

\begin{equation*}
\int_Q\int_Q|u(x)|p_N(t,x,y)\,dx\,d|f|(y)
\le\|u\|_\infty|f|(Q)<\infty.
\end{equation*}

Consequently

\begin{equation}
\boxed{\mathcal S(P_t^Nf)
=\sup_{u\in\mathcal K}\langle f,P_t^Nu\rangle
\le\mathcal S(f).}
\tag{P39}\label{eq:original-P39}
\end{equation}

The last step uses only $P_t^Nu\in\mathcal K$ and does not require $f$ to have a density.

\subsubsection{\texorpdfstring{The exact $\sqrt{\pi t}$ displacement bound}{The exact heat displacement bound}}\label{orig:II:6.6}

Fix an empirical sample and choose $X$ uniformly among its $n$ points, independently of $B_t$. Thus $X$ has law $\mu_n$; set $Y=Y_t(X)$. The joint law of $(X,Y)$ is a valid coupling of $\mu_n$ and $P_t^N\mu_n$. Below assume $0<t\le1$. Since $\rho(x_j)=x_j$,

\begin{equation*}
|Y_t(x)-x|^2
=\sum_j|\rho(x_j+B_t^j)-\rho(x_j)|^2
\le|B_t|^2.
\end{equation*}

By the infimum definition of transport cost,

\begin{equation}
W_1^Q(\mu_n,P_t^N\mu_n)
\le\mathbb E|Y-X|
\le\mathbb E|B_t|.
\tag{P40}\label{eq:original-P40}
\end{equation}

The two-dimensional $B_t$ has density $(4\pi t)^{-1}e^{-|z|^2/(4t)}$. In polar coordinates the radius $r$ has density

\begin{equation*}
\int_0^{2\pi}(4\pi t)^{-1}e^{-r^2/(4t)}r\,d\theta
=\frac r{2t}e^{-r^2/(4t)}.
\end{equation*}

Setting $r=2\sqrt t\,s$ therefore gives

\begin{equation*}
\mathbb E|B_t|
=\int_0^\infty\frac{r^2}{2t}e^{-r^2/(4t)}\,dr
=4\sqrt t\int_0^\infty s^2e^{-s^2}\,ds.
\end{equation*}

One-dimensional integration by parts gives

\begin{equation*}
\int_0^\infty s^2e^{-s^2}\,ds
=-\frac12[s e^{-s^2}]_0^\infty
+\frac12\int_0^\infty e^{-s^2}\,ds
=\frac{\sqrt\pi}{4}.
\end{equation*}

Here we use $I=\sqrt\pi/2$ already computed in Section~\ref{orig:II:6.1}. Gaussian tails remove all endpoint terms. Substitution into \eqref{eq:original-P40} gives

\begin{equation}
\boxed{W_1^Q(\mu_n,P_t^N\mu_n)\le\sqrt{\pi t}.}
\tag{P41}\label{eq:original-P41}
\end{equation}

Taking expectation over the empirical sample still yields $\mathbb E W_1^Q(\mu_n,P_t^N\mu_n)\le\sqrt{\pi t}$. This constant uses covariance $2tI_2$; a Gaussian constant computed with standard covariance $tI_2$ cannot be substituted.

\subsection{Foundational analytic interfaces}\label{orig:II:7}

We list the standard interfaces used in this section. The required KR and cosine-spectral versions are proved separately in Section~\ref{app:analytic}.

\begin{enumerate}[start=1]\item \textbf{Compact-space Kantorovich--Rubinstein duality.} For Borel probability measures on a compact metric space $(M,d)$, the minimum distance-cost coupling equals the supremum of signed pairings over real unit Lipschitz tests. We use this only on $Q$ and the unit torus. Section~\ref{app:analytic} proves the required compact version.\end{enumerate}

\begin{enumerate}[start=2]\item \textbf{Weak gradients of Lipschitz tests.} A real Lipschitz function on the square is differentiable almost everywhere in the interior, belongs to $H^1(Q)$, and has coincident classical and weak gradients with $|\nabla u|\le\operatorname{Lip}_Qu$ almost everywhere. The boundary is a two-dimensional Lebesgue null set. Since $|Q|=1$, this gives $\|\nabla u\|_2\le1$.\end{enumerate}

\begin{enumerate}[start=3]\item \textbf{Neumann completeness and spectral form.} The explicit cosine tensor basis is a complete orthonormal basis of $L^2(Q)$. For $u\in H^1(Q)$, finite cosine projections converge in $H^1$ and the Dirichlet form is $\sum\lambda_k|u_k|^2$. Section~\ref{orig:II:3.4} computes the norms and limits; Section~\ref{app:analytic} supplies even reflection and the weak-gradient identification.\end{enumerate}

\begin{enumerate}[start=4]\item \textbf{Integration and limiting rules.} Tonelli handles nonnegative sums and integrals, Fubini handles absolutely integrable expressions, and dominated convergence handles pointwise limits with integrable envelopes. At every interchange we give the nonnegativity, finite spectrum, integral bound, or summable tail required. These rules are not applied to uncontrolled spectral pairings with singular measures.\end{enumerate}

\begin{enumerate}[start=5]\item \textbf{Fixed-time Gaussian law and folded coupling.} We use $B_t\sim N(0,2tI_2)$, the folded law $Y_t(x)$, and synchronous coupling. This section computes Gaussian normalization and the mean radius; only a single-time law is needed.\end{enumerate}

\subsection{Conventions and estimates for the subsequent proof}\label{orig:II:8}

Throughout, the following measures, metrics, and cutoff variables are fixed. Let
\begin{equation*}
Q=[0,1]^2,\qquad \mu=dx,\qquad |Q|=1,\qquad d_Q(x,y)=|x-y|.
\end{equation*}
For indices \(k\in\mathcal I:=\mathbb N_0^2\setminus\{0\}\), set
\(s(k)=\#\{j:k_j>0\}\), and define the cosine basis and its eigenvalues by
\begin{equation*}
\varphi_k=2^{s(k)/2}\cos(\pi k_1x_1)\cos(\pi k_2x_2),
\qquad
\lambda_k=\pi^2|k|^2.
\end{equation*}
We then define the admissible class and the associated functional
\begin{equation*}
\mathcal K=\Bigl\{u\in C(Q):\operatorname{Lip}_Qu\le1,\ \int_Qu=0\Bigr\},
\qquad
\mathcal S(f)=\sup_{u\in\mathcal K}\langle f,u\rangle.
\end{equation*}
The corresponding flux operator is
\begin{equation*}
J_N(f)=\nabla(-\Delta_N)^{-1}f,
\qquad
-\operatorname{div}J_N(f)=f
\quad\text{in the weak no-flux sense}.
\end{equation*}
The spectral counting function, spectral sum, heat semigroup, heat kernel, and Brownian motion are denoted respectively by
\begin{equation*}
N_D(a)=\#\{\text{positive real eigenmodes with }\lambda\le a\},\qquad
m_D(L)=\sum_{0<\lambda\le L}\lambda^{-1},
\end{equation*}
\begin{equation*}
P_t^N=e^{t\Delta_N},\qquad
G_t(z)=(4\pi t)^{-1/2}e^{-z^2/(4t)},\qquad
B_t\sim N(0,2tI_2).
\end{equation*}

The following relations, whose required calculations have been expanded above, will be used repeatedly. First, the BL distance is represented by \(\mathcal S\), and functions in \(\mathcal K\) satisfy uniform and gradient bounds:
\begin{equation*}
d_{\mathrm{BL}}(\nu,\eta)=W_1^Q(\nu,\eta)=\mathcal S(\nu-\eta),
\end{equation*}
\begin{equation*}
\|u\|_\infty\le\sqrt2,\qquad
\|\nabla u\|_2\le1
\qquad(u\in\mathcal K).
\end{equation*}
Next, the basis is orthogonal in the Dirichlet form and has a pointwise gradient bound:
\begin{equation*}
\int_Q\nabla\varphi_k\cdot\nabla\varphi_l
=\lambda_k\boldsymbol1_{\{k=l\}},\qquad
|\nabla\varphi_k|^2\le4\lambda_k.
\end{equation*}
Moreover, \(\mathcal S\) is controlled by the negative Sobolev norm and is Lipschitz with respect to that norm:
\begin{equation*}
\mathcal S(f)\le\|J_N(f)\|_2=\|f\|_{H_N^{-1}},
\qquad
|\mathcal S(f)-\mathcal S(h)|\le\|J_N(f-h)\|_2.
\end{equation*}
The spectral counting function and spectral sum obey the following asymptotics and bounds:
\begin{equation*}
N_D(a)=\frac a{4\pi}+O(\sqrt a+1),\qquad
N_D(a)\le Ca,
\end{equation*}
\begin{equation*}
m_D(L)=\frac{\log L}{4\pi}+O(1),\qquad
\sum_{\lambda>0}\lambda e^{-2u\lambda}\le Cu^{-2}.
\end{equation*}
Finally, the heat semigroup preserves the admissible class, contracts \(\mathcal S\), and satisfies the following \(W_1\) estimate:
\begin{equation*}
P_t^N\mathcal K\subseteq\mathcal K,\qquad
\mathcal S(P_t^Nf)\le\mathcal S(f),
\end{equation*}
\begin{equation*}
W_1^Q(\mu_n,P_t^N\mu_n)\le\sqrt{\pi t}
\qquad(0<t\le1).
\end{equation*}

Spectral sums and traces apply to every sufficiently large real \(L\). Periodic physical cutoffs are converted by \(\Lambda=\sqrt L\). Periodic flux bounds are used only for periodic tests. Negative Sobolev control is used only for the actual \(L^2\) sources described in Section~\ref{orig:II:4} and for simultaneous \(L^2\) convergence of source and flux. No additional abstract duality or identification of singular measures is invoked.

\section{Periodic Gaussian energy and the linear-growth endpoint}\label{sec:periodic}

There are two different endpoint tasks. Feasible finite-exponent fluxes readily give an upper bound, but a lower bound requires an exactly admissible Lipschitz potential. The nonlinear dual field is a gradient with increasingly high moment control. Pointwise clipping enforces a magnitude bound separately at each point, but does not guarantee that the vectors are derivatives of one periodic potential. We therefore modify its potential using gradient truncation, and estimate the resulting pairing error. Slow diagonalization first puts the finite-exponent theorem on the required endpoint sequence. That theorem has no convergence rate uniform near exponent one. We keep each exponent fixed until its cutoff threshold is reached, and only then move it closer to one.

The fluxes used for support-function estimates in Sections~\ref{sec:preliminaries} and~\ref{sec:comparison} are positive gradients satisfying $-\operatorname{div}J_D(f)=f$. To match the cited constraint $\operatorname{div}H=f$, in this section we write the flux as $J(f)=-\nabla(-\Delta)^{-1}f$. Thus its $J(f)$ is the negative of the periodic flux $J_T(f)$ in Section~\ref{sec:preliminaries}. Integration by parts always follows the relevant convention, and the dual test class is symmetric under sign change.

We prove that expected periodic Gaussian energy on the unit flat torus satisfies

\begin{equation}
\lim_{\Lambda\to\infty}
\frac{\mathbb E\mathcal E_1(\xi_\Lambda)}{\sqrt{m_\Lambda}}
=\kappa_1,
\qquad 0<\kappa_1\le\frac{\sqrt\pi}{2}.
\tag{P.1}\label{eq:original-P-1}
\end{equation}

The limit is taken as $\Lambda\to\infty$ over all real wave-number cutoffs. We give a detailed account of the passage from the finite-$q$ theory for $q>1$ to the endpoint. A primary external input is the fixed version of \cite{AGGT}: specifically, its finite-$q$ Theorem~2.2, its variational identification of the constant, and its scale projection interface. We also use the local gradient truncation of \cite{Muller} and standard analytic, probabilistic, and measurable-selection results, stated explicitly where used. We do not reprove the scale limit, the recovery sequences, or the nonlinear PDE analysis within the finite-$q$ theory, nor do we claim a self-contained reproduction of the entire cited work.

To avoid ambiguity, the Fourier random field uses probability space $(\Omega_g,\mathcal A,\mathbb P_g)$ and expectation $\mathbb E_g$, while the scale variational constant uses a separate probability space $(\Omega_s,\mathcal F,\mathbb P_s)$ and expectation $\mathbb E_s$. They need not be coupled. In the interface to the main proof, $\mathbb E$ denotes $\mathbb E_g$.

\subsection{The periodic model, energy, and linear flux}\label{orig:III:1}

Let

\begin{equation*}
\mathbb T^2=\mathbb R^2/\mathbb Z^2,
\qquad d_T(x,y)=\min_{z\in\mathbb Z^2}|x-y-z|.
\end{equation*}

The area measure $dx$ has mass one. The physical Fourier frequencies are $k\in2\pi\mathbb Z^2$; use the convention

\begin{equation*}
\widehat f(k)=\int_{\mathbb T^2}f(x)e^{-ik\cdot x}\,dx,
\qquad f(x)=\sum_k\widehat f(k)e^{ik\cdot x}.
\end{equation*}

Choose one representative of each nonzero frequency pair $\{k,-k\}$. The real and imaginary parts of $z_k$ are independent centered real Gaussians, each of variance $1/2$; different representatives are independent. Set $z_{-k}=\overline{z_k}$. All cutoffs use this same coefficient family. For real $\Lambda\ge2\pi$ define

\begin{equation}
\xi_\Lambda(x)=\sum_{0<|k|\le\Lambda}z_ke^{ik\cdot x},
\qquad m_\Lambda=\sum_{0<|k|\le\Lambda}|k|^{-2}.
\tag{P.2}\label{eq:original-P-2}
\end{equation}

The finite sum $\xi_\Lambda$ is real, smooth, and spatially mean zero, with $m_\Lambda>0$. The sum $m_\Lambda$ includes both $k$ and $-k$, so no extra factor of two is inserted.

For a smooth real mean-zero source $g$ and $q\ge1$ define

\begin{equation}
\mathcal E_q(g)=\inf\left\{
\int_{\mathbb T^2}|H|^q\,dx:
H\in L^q(\mathbb T^2;\mathbb R^2),\ \operatorname{div}H=g
\right\}.
\tag{P.3}\label{eq:original-P-3}
\end{equation}

The divergence constraint is distributional. The quantity $\mathcal E_q$ is the $q$-power energy, not its $q$th root. Constant vectors are admissible divergence-free perturbations.

Define the mean-zero Poisson potential $(-\Delta)^{-1}g$ and flux

\begin{equation*}
J(g)=-\nabla(-\Delta)^{-1}g.
\end{equation*}

Since $\Delta(-\Delta)^{-1}g=-g$, we have $\operatorname{div}J(g)=g$. For the cutoff field set

\begin{equation}
J_\Lambda=J(\xi_\Lambda),
\qquad Z_\Lambda=m_\Lambda^{-1/2}J_\Lambda.
\tag{P.4}\label{eq:original-P-4}
\end{equation}

Its Fourier coefficients are $\widehat J_\Lambda(k)=-ikz_k/|k|^2$. Parseval and $\mathbb E_g|z_k|^2=1$ give

\begin{equation*}
\begin{aligned}
\mathbb E_g\int|J_\Lambda|^2\,dx
&=\sum_{0<|k|\le\Lambda}
\mathbb E_g\left|\frac{-ikz_k}{|k|^2}\right|^2\\
&=\sum_{0<|k|\le\Lambda}|k|^{-2}=m_\Lambda.
\end{aligned}
\end{equation*}

Consequently

\begin{equation}
\operatorname{div}Z_\Lambda=\frac{\xi_\Lambda}{\sqrt{m_\Lambda}},
\qquad \mathbb E_g\int|Z_\Lambda|^2\,dx=1.
\tag{P.5}\label{eq:original-P-5}
\end{equation}

The second exact identity controls the error caused by truncating the dual field.

We can also check logarithmic trace normalization directly. Write $N(s)=\#\{k\in2\pi\mathbb Z^2:0<|k|^2\le s\}$. Enclose the integer disk of radius $R=\sqrt s/(2\pi)$ between unions of unit cells. Only cells at bounded distance from the circle contribute an error; the boundary strip has area $O(R+1)$. Hence

\begin{equation*}
N(s)=\frac{s}{4\pi}+O(\sqrt s+1).
\end{equation*}

Fix $0<a<4\pi^2$, so $N(a)=0$. Stieltjes integration by parts yields

\begin{equation*}
\begin{aligned}
m_\Lambda
&=\int_a^{\Lambda^2}\frac1s\,dN(s)
=\frac{N(\Lambda^2)}{\Lambda^2}
+\int_a^{\Lambda^2}\frac{N(s)}{s^2}\,ds\\
&=\frac{\log\Lambda}{2\pi}+O(1).
\end{aligned}
\end{equation*}

The error integral is summable at infinity because $(\sqrt s+1)/s^2=s^{-3/2}+s^{-2}$. This estimate also covers every real cutoff, without choosing a spectral subsequence.

\subsection{\texorpdfstring{Finite-$q$ optimal fluxes and their measurability}{Finite-exponent optimal fluxes and measurability}}\label{orig:III:2}

Fix $q>1$ and let $p=q/(q-1)$. Let $S_q$ be the closed divergence-free subspace of $L^q(\mathbb T^2;\mathbb R^2)$. It is closed: if $V_j\to V$ in $L^q$, then for every smooth periodic test $\phi$, $\int V_j\cdot\nabla\phi\to\int V\cdot\nabla\phi$. The admissible flux class is $J(g)+S_q$.

This class is nonempty. A minimizing sequence has bounded $L^q$ norm. Reflexivity of $L^q$ gives a weakly convergent subsequence, the distributional divergence constraint survives the weak limit, and weak lower semicontinuity of the norm gives a minimizer. Strict convexity of $|H|^q$ makes the average of distinct minimizers have smaller energy; thus the minimizer is unique. Denote it by $H_q(g)$.

Let $A_q(z)=|z|^{q-2}z$ and define $A_q(0)=0$. For any $V\in S_q$, first variation along the admissible curve $H_q(g)+tV$ gives

\begin{equation*}
\int A_q(H_q(g))\cdot V\,dx=0.
\end{equation*}

Here $A_q(H_q(g))\in L^p$ because $(q-1)p=q$. We use the standard periodic Hodge dual decomposition: the annihilator of $S_q$ in $L^p$ is exactly the space of mean-zero periodic gradients. Therefore a periodic $u\in W^{1,p}$ exists with

\begin{equation}
A_q(H_q(g))=-\nabla u,
\qquad H_q(g)=-|\nabla u|^{p-2}\nabla u.
\tag{P.6}\label{eq:original-P-6}
\end{equation}

The negative sign matches the source equation $-\operatorname{div}(|\nabla u|^{p-2}\nabla u)=g$. We may impose $\int u=0$. This finite-$q$ variational statement corresponds to AGGT v1 Lemma~2.1.

The differentiation under the integral has an integrable majorant: for
$H=H_q(g)$ and $|t|\le1$, the absolute derivative of $|H+tV|^q$ is at most
$q(|H|+|V|)^{q-1}|V|$. H\"older with exponents $p$ and $q$ makes this
majorant integrable, so dominated convergence justifies the first variation.

Here is the annihilator argument used above. Let
$\mathcal G_p=\{\nabla v:v\in W^{1,p}(\mathbb T^2),\ \int v=0\}$.
If a sequence of its gradients is Cauchy in $L^p$, periodic Poincar\'e makes
the normalized potentials Cauchy in $W^{1,p}$; their limit has the limiting
gradient. Thus $\mathcal G_p$ is closed. A vector $V\in L^q$ annihilates
$\mathcal G_p$ exactly when its distributional divergence is zero: one
direction follows by testing smooth periodic functions, and the other by
their density in $W^{1,p}$ and H\"older. Hence $\mathcal G_p^\perp=S_q$.
The Hahn--Banach double-annihilator identity for this closed linear subspace,
using $(L^p)^*=L^q$, now gives $S_q^\perp=\mathcal G_p$. In particular,
constant divergence-free vectors are included in $S_q$, so annihilation
also forces the mean of $A_q(H_q(g))$ to vanish; no harmonic constant is lost.

Strong measurability is also required; selecting a minimizer separately for each sample is insufficient. We prove continuity of $J\mapsto H_q(J)$, where $H_q(J)$ is the minimum-norm representative of the affine class $J+S_q$. If $J_j\to J$ in $L^q$, set $H_j=H_q(J_j)$ and $H=H_q(J)$. Since $J_j$ itself is feasible, $\|H_j\|_q\le\|J_j\|_q$, so $(H_j)$ is bounded. Every weak cluster point $H_*$ lies in $J+S_q$. Conversely, $H+J_j-J$ is feasible for $J_j+S_q$, hence

\begin{equation*}
\limsup_j\|H_j\|_q^q
\le\lim_j\|H+J_j-J\|_q^q=\|H\|_q^q.
\end{equation*}

For any weakly convergent subsequence $H_{j_\ell}\rightharpoonup H_*$, weak lower semicontinuity and minimality of $H$ give

\begin{equation*}
\|H\|_q^q\le\|H_*\|_q^q
\le\liminf_\ell\|H_{j_\ell}\|_q^q
\le\limsup_j\|H_j\|_q^q\le\|H\|_q^q.
\end{equation*}

The cluster point must therefore be the unique minimizer $H$, and the norms converge too. Uniform convexity of $L^q$ upgrades weak convergence and norm convergence to strong convergence, so $H_j\to H$ in $L^q$. The cutoff flux $J_\Lambda$ is a continuous $L^q$-valued function of finitely many Gaussian coefficients. Thus

\begin{equation*}
H_{q,\Lambda}:=H_q(\xi_\Lambda)
\end{equation*}

is a strongly measurable $L^q$-valued random variable. This supplies the subsequent random dual potential.

\subsection{Scale Gaussian space, martingale representation, and projections}\label{orig:III:3}

We define every object in the constant's variational problem. This space differs from the preceding Fourier probability space.

On $(\Omega_s,\mathcal F,\mathbb P_s)$ take independent real Gaussian white noises $w_1,w_2$ on $[0,\infty)\times[0,\pi]$ with control measure $ds\,d\theta/\pi$. Let $(\mathcal F_t)$ be their completed natural filtration, and set

\begin{equation*}
L_1^r=L^r(\Omega_s,\mathcal F_1,\mathbb P_s;\mathbb R^2).
\end{equation*}

The Gaussian-filtration martingale representation theorem gives, for every $F\in L_1^2$, a unique square-integrable predictable vector integrand $\Phi_F^a(s,\theta)$ such that

\begin{equation}
F=\mathbb E_sF+
\sum_{a=1}^2\int_0^1\int_0^\pi
\Phi_F^a(s,\theta)\,w_a(ds,d\theta).
\tag{P.7}\label{eq:original-P-7}
\end{equation}

For a general $L^2$ variable we represent the martingale \eqref{eq:original-P-7}; Malliavin differentiability is not assumed. Endpoint positivity uses only explicit stochastic integrals.

Write

\begin{equation*}
e_\theta=(\cos\theta,\sin\theta),
\qquad e_\theta^\perp=(-\sin\theta,\cos\theta),
\qquad P_\theta=e_\theta^\perp\otimes e_\theta^\perp.
\end{equation*}

The actual scale projection $\mathsf P$ is defined by

\begin{equation}
\mathsf PF=\mathbb E_sF+
\sum_{a=1}^2\int_0^1\int_0^\pi
P_\theta\Phi_F^a(s,\theta)\,w_a(ds,d\theta),
\qquad \mathsf Q=\operatorname{Id}-\mathsf P.
\tag{P.8}\label{eq:original-P-8}
\end{equation}

It preserves constants and takes the component orthogonal to $e_\theta$ in each angular integrand. It is not the periodic Leray projection acting directly on the spatial variable $x$.

The full input from AGGT v1 Lemma~2.4 is this: $\mathsf P,\mathsf Q$ are complementary orthogonal projections on $L_1^2$; for each $1<r<\infty$ they extend uniquely to bounded complementary projections on $L_1^r$, compatibly on intersections. If $r'=r/(r-1)$, then

\begin{equation}
\mathbb E_s[(\mathsf PF)\cdot V]
=\mathbb E_s[F\cdot(\mathsf PV)],
\qquad F\in L_1^r,\quad V\in L_1^{r'}.
\tag{P.9}\label{eq:original-P-9}
\end{equation}

In $L^2$, orthogonality follows from It\^o isometry and orthogonality of the angular matrices $P_\theta$. Bounded extension to general $L^r$ uses martingale-transform estimates. We cite this extension interface; boundedness of $\mathsf P$ on $L^1$ is not assumed.

Define the continuous Gaussian martingale

\begin{equation}
G_t=\int_0^t\int_0^\pi e_\theta\,w_1(ds,d\theta),
\qquad G=G_1.
\tag{P.10}\label{eq:original-P-10}
\end{equation}

Using $\int_0^\pi\cos^2\theta\,d\theta/\pi=\int_0^\pi\sin^2\theta\,d\theta/\pi=1/2$ and $\int_0^\pi\sin\theta\cos\theta\,d\theta=0$ gives

\begin{equation}
\mathbb E_s[G_t\otimes G_t]=\frac t2\operatorname{Id},
\qquad \mathsf PG=0.
\tag{P.11}\label{eq:original-P-11}
\end{equation}

The second identity uses $\mathbb E_sG=0$ and $P_\theta e_\theta=0$. The two-dimensional Gaussian density of $G$ is $\pi^{-1}e^{-|z|^2}$, so $G\in L_1^r$ for every finite $r$, and

\begin{equation}
\mathbb E_s|G|=2\int_0^\infty r^2e^{-r^2}\,dr
=\int_0^\infty t^{1/2}e^{-t}\,dt
=\Gamma(3/2)=\frac{\sqrt\pi}{2}.
\tag{P.12}\label{eq:original-P-12}
\end{equation}

The last identity uses $\Gamma(3/2)=\frac12\Gamma(1/2)$ obtained by integration by parts and the classical Gaussian integral $\Gamma(1/2)=\sqrt\pi$.

\subsection{\texorpdfstring{The finite-$q$ Gaussian input}{The finite-exponent Gaussian input}}\label{orig:III:4}

\label{original-anchor:imported-finite-q}

\textbf{External theorem (AGGT v1, Theorem~2.2 and the constant identification in Lemma~2.5).} For each fixed $q\in(1,\infty)$ let $H_{q,\Lambda}$ be the unique optimal flux for \eqref{eq:original-P-3} with source $g=\xi_\Lambda$, and set

\begin{equation}
\kappa_q=\inf_{V\in\operatorname{Ran}_{L_1^q}\mathsf P}
\mathbb E_s|G+V|^q.
\tag{P.13}\label{eq:original-P-13}
\end{equation}

Then $\kappa_q\in(0,\infty)$, and for every real $\Lambda\to\infty$,

\begin{equation}
m_\Lambda^{-q/2}\mathbb E_g\int_{\mathbb T^2}|H_{q,\Lambda}|^q\,dx
\longrightarrow\kappa_q.
\tag{P.14}\label{eq:original-P-14}
\end{equation}

Thus $U_{q,\Lambda}=H_{q,\Lambda}/\sqrt{m_\Lambda}$ satisfies $\mathbb E_g\int|U_{q,\Lambda}|^q\to\kappa_q$. The theorem supplies no rate uniform in $q\downarrow1$. Our diagonal argument handles this explicitly.

By the trace asymptotics of Section~\ref{orig:III:1}, the theorem equivalently reads $\mathbb E_g\mathcal E_q(\xi_\Lambda)\sim\kappa_q(\log\Lambda/(2\pi))^{q/2}$. For $q=2$ write an admissible flux as $J_\Lambda+V$, with $\operatorname{div}V=0$. Periodic integration by parts gives $\int J_\Lambda\cdot V=0$ and hence $\int|J_\Lambda+V|^2=\int|J_\Lambda|^2+\int|V|^2$. The optimum is $J_\Lambda$, with expected energy exactly $m_\Lambda$; therefore $\kappa_2=1$.

In \eqref{eq:original-P-13} the variable $V$ is the correction, while $G+V$ is the total vector. This avoids conflating them and fixes the notation throughout. The finite-$q$ theorem and its variational identification remain external research inputs; the endpoint argument does not replace their proofs.

\subsection{\texorpdfstring{Duality and positivity of $\kappa_1$, and continuity of $\kappa_q$}{Duality, positivity, and exponent continuity of the constants}}\label{orig:III:5}

\label{original-anchor:constant-continuity}

Define the closed linear subspace of $L_1^1$

\begin{equation}
\mathcal P_1=
\overline{\bigcup_{r>1}\operatorname{Ran}_{L_1^r}\mathsf P}^{L_1^1},
\qquad
\kappa_1=\inf_{V\in\mathcal P_1}\mathbb E_s|G+V|.
\tag{P.15}\label{eq:original-P-15}
\end{equation}

The union is linear: for $V_i\in\operatorname{Ran}_{L_1^{r_i}}\mathsf P$ take $r=\min(r_1,r_2)>1$. Probability-space embeddings give $V_i\in L_1^r$, and compatibility gives $\mathsf PV_i=V_i$, so sums and scalar multiples remain in the range. We do not interpret $\mathcal P_1$ as the range of an unproved $L^1$ projection.

The Hahn--Banach distance formula for a closed linear subspace $M$ of a Banach space $X$ is

\begin{equation*}
\operatorname{dist}(x,M)
=\sup\{\ell(x):\ell\in X^*,\ \|\ell\|\le1,\ \ell|_M=0\}.
\end{equation*}

Apply it with $X=L_1^1$, $x=-G$, and $M=\mathcal P_1$. Symmetry of the class of linear functionals changes $-G$ to $G$. Since the dual of $L_1^1$ is $L_1^\infty$, only the annihilator must be identified.

If $F\in L_1^\infty$ and $\mathsf PF=0$, then for $V\in\operatorname{Ran}_{L_1^r}\mathsf P$ the facts $F\in L_1^{r'}$ and \eqref{eq:original-P-9} give

\begin{equation*}
\mathbb E_s[F\cdot V]
=\mathbb E_s[F\cdot\mathsf PV]
=\mathbb E_s[(\mathsf PF)\cdot V]=0.
\end{equation*}

Closure in $L^1$ and boundedness of $F$ extend the identity to each $V\in\mathcal P_1$. Conversely, if $F$ annihilates $\mathcal P_1$, then $F\in L_1^2$ and $\mathsf PF\in\operatorname{Ran}_{L_1^2}\mathsf P\subset\mathcal P_1$. Orthogonality in $L^2$ gives

\begin{equation*}
0=\mathbb E_s[F\cdot\mathsf PF]=\mathbb E_s|\mathsf PF|^2,
\end{equation*}

Hence $\mathsf PF=0$. Therefore

\begin{equation}
\kappa_1=
\sup\{\mathbb E_s[G\cdot F]:\ F\in L_1^\infty,\ \mathsf PF=0,\ |F|\le1\}.
\tag{P.16}\label{eq:original-P-16}
\end{equation}

This supremum is attained: Banach--Alaoglu makes the $L^\infty$ unit ball weak-star compact; the annihilator, defined by all conditions $\mathbb E_s[F\cdot V]=0$, is weak-star closed; and the objective is weak-star continuous since $G\in L^1$. The endpoint lower bound needs only \eqref{eq:original-P-16}, without selecting a maximizing random variable.

To prove positivity, set

\begin{equation*}
\tau=\inf\{t\ge0:|G_t|=1\}\wedge1,
\qquad F_0=G_\tau.
\end{equation*}

Continuity of $G_t$ and $G_0=0$ give $\tau>0$ almost surely. If the unit circle is hit before time one, continuity gives $|G_\tau|=1$; otherwise $|G_1|\le1$. Thus $|F_0|\le1$. The stopped-integral representation is

\begin{equation*}
F_0=\int_0^1\int_0^\pi
\boldsymbol1_{s\le\tau}e_\theta\,w_1(ds,d\theta).
\end{equation*}

The integrand is predictable and square integrable, and its stochastic integral has mean zero. Since $P_\theta e_\theta=0$, both the constant and integral terms of \eqref{eq:original-P-8} vanish, so $\mathsf PF_0=0$. It\^o isometry and its bilinear form yield

\begin{equation*}
\begin{aligned}
\mathbb E_s[G\cdot F_0]
&=\mathbb E_s\int_0^1\int_0^\pi
\boldsymbol1_{s\le\tau}|e_\theta|^2\,\frac{d\theta}{\pi}\,ds\\
&=\mathbb E_s\tau
=\mathbb E_s|F_0|^2>0.
\end{aligned}
\end{equation*}

Thus \eqref{eq:original-P-16} implies $\kappa_1\ge\mathbb E_s\tau>0$. Choosing $V=0$ in \eqref{eq:original-P-15} and using \eqref{eq:original-P-12} gives

\begin{equation}
0<\kappa_1\le\Gamma(3/2)=\frac{\sqrt\pi}{2}.
\tag{P.17}\label{eq:original-P-17}
\end{equation}

We now prove $\kappa_q\to\kappa_1$. For each $V\in\operatorname{Ran}_{L_1^q}\mathsf P\subset\mathcal P_1$, H\"older's inequality on the probability space gives

\begin{equation*}
\mathbb E_s|G+V|^q
\ge\bigl(\mathbb E_s|G+V|\bigr)^q
\ge\kappa_1^q.
\end{equation*}

Taking infima yields $\kappa_q\ge\kappa_1^q$, hence $\liminf_{q\downarrow1}\kappa_q\ge\kappa_1$.

For the reverse bound fix $\eta>0$. Choose $V_0$ in the closed space $\mathcal P_1$ with $\mathbb E_s|G+V_0|\le\kappa_1+\eta/2$. By the definition of closure choose some $r>1$ and $V\in\operatorname{Ran}_{L_1^r}\mathsf P$ with $\mathbb E_s|V-V_0|\le\eta/2$. The triangle inequality gives $\mathbb E_s|G+V|\le\kappa_1+\eta$. For $1<q<r$, projection compatibility makes this $V$ feasible in \eqref{eq:original-P-13}, and

\begin{equation*}
|G+V|^q\le1+|G+V|^r.
\end{equation*}

The right side is integrable, so dominated convergence gives

\begin{equation*}
\limsup_{q\downarrow1}\kappa_q
\le\lim_{q\downarrow1}\mathbb E_s|G+V|^q
=\mathbb E_s|G+V|\le\kappa_1+\eta.
\end{equation*}

Finally let $\eta\downarrow0$ to obtain

\begin{equation}
\lim_{q\downarrow1}\kappa_q=\kappa_1.
\tag{P.18}\label{eq:original-P-18}
\end{equation}

The exponent $r$ of the near-optimal correction may depend on $\eta$; it need not be fixed at every precision.

\subsection{Periodic Beckmann duality and normalized endpoint energy}\label{orig:III:6}

Classical periodic Beckmann duality states, for a smooth real mean-zero source $g$,

\begin{equation}
\mathcal E_1(g)=\sup\left\{\int_{\mathbb T^2}g\phi\,dx:
\phi\in W^{1,\infty}(\mathbb T^2),\ |\nabla\phi|\le1\text{ a.e.}\right\}.
\tag{P.19}\label{eq:original-P-19}
\end{equation}

This interface applies to the unit flat torus, whose diameter is $\sqrt2/2$. To pass from probability measures to a general smooth mean-zero source, set $a=\int g_+\,dx=\int g_-\,dx$. If $a=0$ then $g=0$ and both flux energy and dual vanish. If $a>0$, set $\nu=g_+dx/a$ and $\eta=g_-dx/a$; then $gdx=a(\nu-\eta)$. Scaling the flux constraint and pairing in both directions extends the probability version to $g$. AGGT v1 Lemma~2.15 uses this formulation too. The $W^{1,\infty}$ gradient bound equals unit Lipschitz continuity for the torus quotient Euclidean metric: integration along shortest geodesics gives one implication, and local Euclidean Rademacher gives the other.

To justify the representative in this equivalence, one must not integrate an
almost-everywhere weak gradient along an arbitrary fixed line. Instead take
periodic nonnegative mollifications $\phi_\epsilon$ of $\phi$. Convolution
preserves $|\nabla\phi_\epsilon|\le1$. For these smooth functions, integration
along a shortest lifted segment proves the Lipschitz inequality for every
pair of points. After subtracting their common mean, they are uniformly
bounded and equicontinuous. Arzel\`a--Ascoli gives a uniformly convergent
subsequence; adding the mean back and using convergence of mollification
in $L^1$ identifies a representative of $\phi$. The Lipschitz inequality passes to
the limit for all pairs. On the convex square the same argument works first
on each smaller interior square; their compatible representatives extend
continuously to $Q$. This also justifies the gradient-to-Lipschitz step in
the boundary modification below.

Allowing vector Radon measure fluxes in the classical statement gives the same $L^1$ infimum as \eqref{eq:original-P-3}. Indeed, for any measure flux $H$ satisfying $\operatorname{div}H=g$, let $P_t^{\mathrm{heat}}$ be the periodic heat semigroup. The smooth flux $P_t^{\mathrm{heat}}H$ has divergence $P_t^{\mathrm{heat}}g$ and satisfies $\|P_t^{\mathrm{heat}}H\|_1\le|H|(\mathbb T^2)$. Add

\begin{equation*}
R_t=-\nabla(-\Delta)^{-1}(g-P_t^{\mathrm{heat}}g)
\end{equation*}

to restore divergence $g$. Since the first positive eigenvalue is $4\pi^2$,

\begin{equation*}
\|R_t\|_1\le\|R_t\|_2
\le\frac1{2\pi}\|g-P_t^{\mathrm{heat}}g\|_2\to0.
\end{equation*}

Thus the $L^1$-flux infimum is at most the measure-flux infimum; the reverse follows from inclusion. They coincide, without asserting existence of an $L^1$ minimizer.

By $\operatorname{div}J(g)=g$ the correct integration-by-parts sign is $\int g\phi=-\int J(g)\cdot\nabla\phi$. Symmetry under $\phi\mapsto-\phi$ shows that \eqref{eq:original-P-19} also equals the positive-pairing supremum. Hence

\begin{equation}
\frac{\mathcal E_1(\xi_\Lambda)}{\sqrt{m_\Lambda}}
=\sup_{\phi\in W^{1,\infty},\ |\nabla\phi|\le1}
\int Z_\Lambda\cdot\nabla\phi\,dx.
\tag{P.20}\label{eq:original-P-20}
\end{equation}

Set $e_{1,\Lambda}=\mathbb E_g\mathcal E_1(\xi_\Lambda)/\sqrt{m_\Lambda}$. It is finite because $J_\Lambda$ is an admissible flux. It is measurable because the mean-zero periodic Lipschitz tests form a compact set in the uniform norm, their support functional is continuous on $L^1$ sources, and the cutoff field depends continuously on finitely many Gaussian coefficients.

\subsection{Diagonalization for every real cutoff and the upper bound}\label{orig:III:7}

By \eqref{eq:original-P-18} choose $1<q_j<2$ so that $q_j$ strictly decreases to one and $|\kappa_{q_j}-\kappa_1|\le1/j$. For each fixed $q_j$, the limit \eqref{eq:original-P-14} provides a threshold above which every real $\Lambda$ satisfies

\begin{equation*}
\left|\mathbb E_g\int|U_{q_j,\Lambda}|^{q_j}\,dx-\kappa_{q_j}\right|
\le1/j.
\end{equation*}

Enlarge thresholds recursively to obtain strictly increasing $\Lambda_j\to\infty$. For $\Lambda_j\le\Lambda<\Lambda_{j+1}$ define

\begin{equation*}
q_\Lambda=q_j,\qquad p_\Lambda=\frac{q_\Lambda}{q_\Lambda-1},
\qquad U_\Lambda=\frac{H_{q_\Lambda,\Lambda}}{\sqrt{m_\Lambda}},
\qquad e_\Lambda=\mathbb E_g\int|U_\Lambda|^{q_\Lambda}\,dx.
\end{equation*}

On this entire interval $|e_\Lambda-\kappa_1|\le2/j$. Thus

\begin{equation}
q_\Lambda\to1,\qquad p_\Lambda\to\infty,
\qquad e_\Lambda\to\kappa_1.
\tag{P.21}\label{eq:original-P-21}
\end{equation}

No convergence uniform in $q$ is required in \eqref{eq:original-P-14}, and the two limits have not been interchanged.

The flux $U_\Lambda$ is feasible for $\xi_\Lambda/\sqrt{m_\Lambda}$. The homogeneity $\mathcal E_1(ag)=a\mathcal E_1(g)$ for $a>0$ follows by scaling admissible fluxes in both directions through $H\leftrightarrow aH$. Therefore

\begin{equation*}
\begin{aligned}
e_{1,\Lambda}
&\le\mathbb E_g\int|U_\Lambda|\,dx\\
&\le\left(\mathbb E_g\int|U_\Lambda|^{q_\Lambda}\,dx\right)^{1/q_\Lambda}
=e_\Lambda^{1/q_\Lambda}.
\end{aligned}
\end{equation*}

Here H\"older is applied to the mass-one product measure $\mathbb P_g\otimes dx$. Since $\kappa_1>0$ and $\log e_\Lambda\to\log\kappa_1$, we have $e_\Lambda^{1/q_\Lambda}\to\kappa_1$. Consequently

\begin{equation}
\limsup_{\Lambda\to\infty}e_{1,\Lambda}\le\kappa_1.
\tag{P.22}\label{eq:original-P-22}
\end{equation}

\subsection{The nonlinear dual field, exact pairing, and higher moments}\label{orig:III:8}

Define

\begin{equation*}
F_\Lambda=A_{q_\Lambda}(U_\Lambda)
=|U_\Lambda|^{q_\Lambda-2}U_\Lambda.
\end{equation*}

By \eqref{eq:original-P-6} and $A_q(cH)=c^{q-1}A_q(H)$ there is a periodic potential with

\begin{equation}
F_\Lambda=-m_\Lambda^{-(q_\Lambda-1)/2}\nabla u_{q_\Lambda,\Lambda}.
\tag{P.23}\label{eq:original-P-23}
\end{equation}

Thus $F_\Lambda$ is a spatially mean-zero periodic gradient in $L^{p_\Lambda}$. Its normalization exponent is $(q_\Lambda-1)/2$, not $1/2$.

Pointwise calculation gives

\begin{equation*}
U_\Lambda\cdot F_\Lambda=|U_\Lambda|^{q_\Lambda},
\qquad |F_\Lambda|^{p_\Lambda}=|U_\Lambda|^{(q_\Lambda-1)p_\Lambda}
=|U_\Lambda|^{q_\Lambda}.
\end{equation*}

Since $\operatorname{div}(U_\Lambda-Z_\Lambda)=0$, its pairing with every smooth periodic gradient vanishes. Approximate the potential of \eqref{eq:original-P-23} by smooth periodic potentials in $W^{1,p_\Lambda}$; H\"older passes the identity to the limit. Samplewise,

\begin{equation*}
\int Z_\Lambda\cdot F_\Lambda\,dx
=\int U_\Lambda\cdot F_\Lambda\,dx.
\end{equation*}

It follows that

\begin{equation}
\mathbb E_g\int Z_\Lambda\cdot F_\Lambda\,dx=e_\Lambda,
\qquad \mathbb E_g\int|F_\Lambda|^{p_\Lambda}\,dx=e_\Lambda.
\tag{P.24}\label{eq:original-P-24}
\end{equation}

Take an arbitrary deterministic sequence $\Lambda_n\to\infty$ and write $F_n=F_{\Lambda_n}$, $p_n=p_{\Lambda_n}$, and $e_n=e_{\Lambda_n}$. Fix $r>2$ and discard a finite prefix so that every $p_n>r$. On the product probability measure,

\begin{equation}
\mathbb E_g\int|F_n|^r\,dx
\le\left(\mathbb E_g\int|F_n|^{p_n}\,dx\right)^{r/p_n}
=e_n^{r/p_n}\to1.
\tag{P.25}\label{eq:original-P-25}
\end{equation}

These $r$th moments are uniformly bounded. Fix $\varepsilon>0$. On $|F_n|\le1+\varepsilon$ we have $(|F_n|-1)_+^r\le\varepsilon^r$; on the complement, since $r-p_n<0$,

\begin{equation*}
(|F_n|-1)_+^r\le|F_n|^r
=|F_n|^{p_n}|F_n|^{r-p_n}
\le(1+\varepsilon)^{r-p_n}|F_n|^{p_n}.
\end{equation*}

Integration yields

\begin{equation*}
\mathbb E_g\int\operatorname{dist}(F_n,\overline B_1)^r\,dx
\le\varepsilon^r+(1+\varepsilon)^{r-p_n}e_n.
\end{equation*}

First let $n\to\infty$ so the second term vanishes, then let $\varepsilon\downarrow0$ to obtain

\begin{equation}
\mathbb E_g\int\operatorname{dist}(F_n,\overline B_1)^r\,dx\to0.
\tag{P.26}\label{eq:original-P-26}
\end{equation}

Here $\overline B_1$ is the closed unit ball of $\mathbb R^2$. We still cannot use $F_n$ directly as a test in \eqref{eq:original-P-20}: \eqref{eq:original-P-26} is not an exact samplewise unit-gradient bound. Truncation must preserve the gradient structure.

First establish strong measurability of the potentials. For fixed $q\in(1,2)$ let $\alpha=q-1\in(0,1)$. The map $z\mapsto |z|^{\alpha-1}z$ satisfies

\begin{equation*}
|A_q(x)-A_q(y)|\le C_q|x-y|^\alpha.
\end{equation*}

There are two cases. If $|x-y|\ge\max(|x|,|y|)/2$, use $|A_q(x)|=|x|^\alpha$ directly. Otherwise the segment from the longer vector to the other stays away from zero, the derivative operator norm is at most $|z|^{\alpha-1}$ along it, and integrating the derivative gives the same bound. Since $\alpha p=q$,

\begin{equation*}
\|A_q(U)-A_q(V)\|_p\le C_q\|U-V\|_q^{q-1}.
\end{equation*}

Thus $A_q:L^q\to L^p$ is continuous. Strong measurability of the flux from Section~\ref{orig:III:2} gives strong measurability of $F_n$ in $L^{p_n}$, hence also in $L^r$.

For a mean-zero periodic gradient $F\in L^r$, its mean-zero potential $h$ satisfies the periodic Poincar\'e inequality

\begin{equation*}
\|h\|_{W^{1,r}}\le C_r\|F\|_r.
\end{equation*}

Recovery of the mean-zero potential from its gradient is therefore bounded and linear. The gradient space is closed too: if $F_j\to F$ in $L^r$, this bound makes the corresponding mean-zero potentials Cauchy in $W^{1,r}$; the limiting gradient is $F$. Choosing $\nabla h_n=F_n$ and $\int h_n=0$ makes $h_n$ a strongly measurable $W^{1,r}$-valued random variable.

\subsection{M\"uller's local truncation theorem and its deterministic periodic form}\label{orig:III:9}

\label{original-anchor:periodic-truncation}

\textbf{Cited theorem (M\"uller's local gradient truncation).} Let $O\subset\mathbb R^d$ be open, $K\subset\mathbb R^{m\times d}$ compact and convex, and $u_j\in W^{1,1}_{\mathrm{loc}}(O;\mathbb R^m)$. If $u_j\to u_0$ in $L^1_{\mathrm{loc}}$ and $\operatorname{dist}(Du_j,K)\to0$ in $L^1_{\mathrm{loc}}$, then there are increasing open sets $U_j\Subset O$ and functions $v_j\in W^{1,1}_{\mathrm{loc}}(O;\mathbb R^m)$ with

\begin{equation}
v_j=u_0\text{ on }O\setminus U_j,
\quad |\{u_j\ne v_j\}\cap U_j|\to0,
\quad \|\operatorname{dist}(Dv_j,K)\|_{L^\infty(O)}\to0.
\tag{P.27}\label{eq:original-P-27}
\end{equation}

If $|O|<\infty$, then $|O\setminus U_j|\to0$, so the modification set $|\{u_j\ne v_j\}|\to0$ is small on all of $O$. The original output space is $W^{1,1}_{\mathrm{loc}}$. Boundedness of $K$ and the global distance bound in \eqref{eq:original-P-27} imply that, for sufficiently large $j$, $\|Dv_j\|_\infty<\infty$, and hence $v_j$ admits a locally Lipschitz representative. The relevant truncation input is
\cite[Theorem~4]{Muller}; its local form is cited in
\cite[Lemma~2.18]{AGGT}. We use this foundational truncation theorem without reproducing its entire local construction.

We now prove the deterministic periodic consequence: for every $\eta>0$ there is $\delta(\eta)>0$ such that, whenever periodic $h\in W^{1,1}(\mathbb T^2)$ satisfies

\begin{equation*}
\int\operatorname{dist}(\nabla h,\overline B_1)\,dx\le\delta(\eta)
\end{equation*}

there is a periodic $\psi\in W^{1,\infty}(\mathbb T^2)$ satisfying

\begin{equation}
|\nabla\psi|\le1+\eta\text{ a.e.},
\qquad |\{h\ne\psi\}|\le\eta.
\tag{P.28}\label{eq:original-P-28}
\end{equation}

Otherwise, for some fixed $\eta_0>0$ one can choose periodic $u_j$ with $\int\operatorname{dist}(\nabla u_j,\overline B_1)\to0$ but with no modification satisfying \eqref{eq:original-P-28}. Subtracting the mean of $u_j$ leaves this contradiction property intact, since adding it back to any modification gives a modification of the original function.

Set $a_j=\Pi_{\overline B_1}(\nabla u_j)$ and $e_j=\nabla u_j-a_j$. The ball is closed and convex, so its nearest-point projection is defined, $|a_j|\le1$, and $\|e_j\|_1=\int\operatorname{dist}(\nabla u_j,\overline B_1)\to0$. Hence $\|\nabla u_j\|_1\le1+o(1)$; mean-zero normalization and periodic Poincar\'e give a uniform $W^{1,1}$ bound.

Rellich compactness yields a subsequence with $u_j\to u_0$ in $L^1(\mathbb T^2)$. Weak-star compactness in $L^\infty$ yields $a_j\rightharpoonup^*a$. Against smooth periodic tests, convergence in $L^1$ and the identity $e_j\to0$ imply $\nabla u_0=a$ distributionally. The closed convex unit ball is preserved under weak-star convergence, so $|a|\le1$ almost everywhere. Thus $u_0$ has a periodic unit Lipschitz representative.

Apply \eqref{eq:original-P-27} with $O=(0,1)^2$, $d=2$, $m=1$, and $K=\overline B_1$. Global convergence gives the required local convergence. We obtain $v_j=u_0$ in a boundary neighborhood, $\|\operatorname{dist}(\nabla v_j,\overline B_1)\|_\infty\to0$, and, by the finite-area clause, $|\{u_j\ne v_j\}|\to0$.

Opposite boundary values of $u_0$ match, so the opposite traces of $v_j$ match too. Periodic extension has no seam jump; matching normal derivatives is unnecessary. The global gradient bound and bounded $u_0$ near the boundary make $v_j$ a global $W^{1,\infty}$ function, whose periodic extension has the same gradient bound. Thus sufficiently large $j$ satisfies \eqref{eq:original-P-28}, contradicting the assumed failure and proving the consequence.

The function preserved near the boundary is the limit $u_0$, not the original $u_j$ or zero. This suffices for periodicity. Smallness of the modification set on the whole fundamental square is supplied exactly by the finite-area clause.

\subsection{\texorpdfstring{Random measurable truncation and $L^2$ convergence of gradients}{Random measurable truncation and L2 gradient convergence}}\label{orig:III:10}

\label{original-anchor:random-truncation}

We prove the random periodic form used in AGGT v1 Lemma~2.18. Fix $r>2$ and assume $h_n:\Omega_g\to W^{1,r}(\mathbb T^2)$ is strongly measurable with

\begin{equation}
\mathbb E_g\int\operatorname{dist}(\nabla h_n,\overline B_1)^r\,dx\to0,
\qquad C_r:=\sup_n\mathbb E_g\int|\nabla h_n|^r\,dx<\infty.
\tag{P.29}\label{eq:original-P-29}
\end{equation}

First make the selection in \eqref{eq:original-P-28} measurable. Sobolev embedding in dimension two says that for $r>2$ the separable Banach space $W^{1,r}(\mathbb T^2)$ embeds continuously in $C(\mathbb T^2)$. We therefore define equality sets using continuous representatives.

For fixed $\eta>0$ define a relation on the product of two $W^{1,r}$ spaces:

\begin{equation*}
\Gamma_\eta=\{(h,\psi):\|\nabla\psi\|_\infty\le1+\eta,
\ |\{h\ne\psi\}|\le\eta\}.
\end{equation*}

The first constraint is closed in $W^{1,r}$: if $\psi_j\to\psi$ in $W^{1,r}$, a subsequence of gradients converges almost everywhere and the bound passes to the limit. The second constraint is Borel, since

\begin{equation*}
|\{h\ne\psi\}|=\lim_{m\to\infty}\int\min\{1,m|h-\psi|\}\,dx,
\end{equation*}

The right side is a pointwise limit of continuous functions. The function $D(h)=\int\operatorname{dist}(\nabla h,\overline B_1)$ is continuous because distance is unit Lipschitz and $|D(h)-D(h')|\le\|\nabla h-\nabla h'\|_1$. Thus $E_\eta=\{h:D(h)\le\delta(\eta)\}$ is Borel, and by \eqref{eq:original-P-28} the relation has a nonempty section over every $h\in E_\eta$.

The Jankov--von Neumann selection theorem says that for Polish spaces $X,Y$ and a Borel relation $A\subset X\times Y$ there is a universally measurable selection $s$ on the first-coordinate projection satisfying $(x,s(x))\in A$. See A.~S.~Kechris~\cite{Kechris}, \emph{Classical Descriptive Set Theory} (1995), Theorem~18.1; AGGT v1 Lemma~2.18 uses this theorem. A Borel selection is not asserted. More precisely, Theorem~18.1 gives
a $\sigma(\Sigma^1_1)$-measurable uniformization of an analytic relation.
The Borel relation here is analytic. Analytic sets are universally measurable~\cite[Corollary~7.42.1]{BertsekasShreve},
and universally measurable sets form a sigma-algebra, so
$\sigma(\Sigma^1_1)$ is contained in the universal completion of the Borel
sigma-algebra. This is the bridge from the original theorem to the formulation
used here.

Apply it to $\Gamma_\eta$ restricted to $E_\eta$. Any strongly measurable $W^{1,r}$-valued random variable has a Borel probability law. A universally measurable selection is measurable for the completion of this law, so composition is measurable on the completed probability space. Separability gives strong measurability. If the original Fourier space is incomplete, complete it and choose an almost-everywhere equal version measurable for the original sigma-algebra. All subsequent conclusions are almost-everywhere statements and are unaffected.

Set

\begin{equation*}
D_n(\omega)=\int\operatorname{dist}(\nabla h_n(\omega),\overline B_1)\,dx,
\qquad b_n=\mathbb E_gD_n.
\end{equation*}

H\"older on the product probability space gives $b_n\le(\mathbb E_g\int\operatorname{dist}(\nabla h_n,\overline B_1)^r)^{1/r}\to0$. For $k\ge1$ set $\eta_k=1/k$ and $\delta_k=\delta(\eta_k)>0$. Choose strictly increasing integers $N_k$ so that for every $n\ge N_k$, $b_n\le\delta_k/k$.

For $N_k\le n<N_{k+1}$ put $\varepsilon_n=1/k$. On the good event $D_n\le\delta_k$ use the strongly measurable truncation selection; on the bad event set $\psi_n=0$. Finitely many initial indices may also be set to zero, with $\varepsilon_n=1$. This gives a nonincreasing deterministic sequence $\varepsilon_n\to0$ and strongly measurable periodic potentials satisfying $|\nabla\psi_n|\le1+\varepsilon_n$.

Markov's inequality yields

\begin{equation*}
\mathbb P_g(D_n>\delta_k)\le\frac{b_n}{\delta_k}\le\frac1k.
\end{equation*}

The modification set has area at most $1/k$ on the good event and at most the torus area one on the bad event. Writing $A_n=\{(\omega,x):\psi_n(\omega,x)\ne h_n(\omega,x)\}$ gives

\begin{equation}
\rho_n:=(\mathbb P_g\otimes dx)(A_n)\le\frac2k\to0.
\tag{P.30}\label{eq:original-P-30}
\end{equation}

By locality of Sobolev gradients, two Sobolev functions equal on a measurable set have equal weak gradients there almost everywhere. Apply this samplewise and then use Fubini: on product space, the gradient difference is supported in $A_n$. Hence

\begin{equation}
\begin{aligned}
\mathbb E_g\int|\nabla\psi_n-\nabla h_n|^2\,dx
&\le2\mathbb E_g\int_{A_n}|\nabla h_n|^2\,dx
+2(1+\varepsilon_n)^2\rho_n\\
&\le2C_r^{2/r}\rho_n^{1-2/r}
+2(1+\varepsilon_n)^2\rho_n\to0.
\end{aligned}
\tag{P.31}\label{eq:original-P-31}
\end{equation}

The second step uses H\"older exponents $r/2$ and $r/(r-2)$; $r>2$ ensures $1-2/r>0$. This shows how a small modification set and a uniform higher moment exclude $L^2$ concentration of gradients.

Let $\widehat\psi_n=\psi_n/(1+\varepsilon_n)$. It satisfies exactly $|\nabla\widehat\psi_n|\le1$, and

\begin{equation*}
|\nabla\widehat\psi_n-\nabla\psi_n|
=\frac{\varepsilon_n}{1+\varepsilon_n}|\nabla\psi_n|
\le\varepsilon_n.
\end{equation*}

Since product measure has mass one, the triangle inequality and \eqref{eq:original-P-31} give

\begin{equation}
\left(\mathbb E_g\int|\nabla\widehat\psi_n-\nabla h_n|^2\,dx\right)^{1/2}
\le\left(\mathbb E_g\int|\nabla\psi_n-\nabla h_n|^2\,dx\right)^{1/2}
+\varepsilon_n\to0.
\tag{P.32}\label{eq:original-P-32}
\end{equation}

This global normalization error is retained explicitly, rather than counted as an error only on the modification set.

\subsection{The final lower bound and the all-real-parameter endpoint theorem}\label{orig:III:11}

\label{original-anchor:periodic-endpoint-theorem}

By \eqref{eq:original-P-25}--\eqref{eq:original-P-26}, the potential $h_n$ of Section~\ref{orig:III:8} satisfies \eqref{eq:original-P-29}. Section~\ref{orig:III:10} gives a periodic unit Lipschitz potential $\widehat\psi_n$. Write $\widehat F_n=\nabla\widehat\psi_n$; then

\begin{equation*}
|\widehat F_n|\le1,
\qquad \mathbb E_g\int|\widehat F_n-F_n|^2\,dx\to0.
\end{equation*}

By \eqref{eq:original-P-5} and Cauchy--Schwarz on product space,

\begin{equation*}
\begin{aligned}
\mathbb E_g\left|\int Z_{\Lambda_n}\cdot(\widehat F_n-F_n)\,dx\right|
&\le\left(\mathbb E_g\int|Z_{\Lambda_n}|^2\,dx\right)^{1/2}
\left(\mathbb E_g\int|\widehat F_n-F_n|^2\,dx\right)^{1/2}\\
&\to0.
\end{aligned}
\end{equation*}

Samplewise, $\widehat\psi_n$ is admissible in \eqref{eq:original-P-20}, so

\begin{equation*}
\begin{aligned}
e_{1,\Lambda_n}
&\ge\mathbb E_g\int Z_{\Lambda_n}\cdot\widehat F_n\,dx\\
&=\mathbb E_g\int Z_{\Lambda_n}\cdot F_n\,dx+o(1)
=e_n+o(1)\to\kappa_1.
\end{aligned}
\end{equation*}

The random potential need not be independent of the noise: samplewise admissibility suffices to compare with the supremum. The $L^2$ error estimate likewise uses no independence.

The initial $\Lambda_n\to\infty$ was arbitrary. If the lower limit over all real parameters were below $\kappa_1$, some $\eta>0$ and sequence $\Lambda_n\to\infty$ would satisfy $e_{1,\Lambda_n}\le\kappa_1-\eta$, contradicting the preceding conclusion. Thus $\liminf_{\Lambda\to\infty}e_{1,\Lambda}\ge\kappa_1$. Combined with \eqref{eq:original-P-22} this yields \eqref{eq:original-P-1}; positivity and the upper bound follow from \eqref{eq:original-P-17}. This is the endpoint conclusion of AGGT v1 Theorem~2.19, with the finite-$q$-to-endpoint passage expanded here.

\subsection{Interface to the main proof, citation scope, and notation checks}\label{orig:III:12}

\label{original-anchor:main-interface}

The periodic Lipschitz energy used in the main proof is

\begin{equation*}
T(f)=\sup\left\{\int_{\mathbb T^2}uf\,dx:
\operatorname{Lip}_{\mathbb T^2}u\le1,\ \int u\,dx=0\right\}.
\end{equation*}

For mean-zero $f$, subtracting the test mean leaves the pairing unchanged; thus \eqref{eq:original-P-19} gives $T(f)=\mathcal E_1(f)$. If the main proof uses eigenvalue cutoff $L$, the actual periodic eigenvalues are $|k|^2$, so

\begin{equation*}
\Lambda=\sqrt L,
\qquad \xi_T^L=\xi_{\sqrt L},
\qquad m_T(L)=m_{\sqrt L},
\qquad F_T(L)=\mathbb E_gT(\xi_T^L).
\end{equation*}

The independent standard real Fourier coefficients $A_k,B_k$ correspond to our complex coefficients by $z_k=(A_k-iB_k)/\sqrt2$, because

\begin{equation*}
z_ke^{ik\cdot x}+\overline{z_k}e^{-ik\cdot x}
=\sqrt2A_k\cos(k\cdot x)+\sqrt2B_k\sin(k\cdot x).
\end{equation*}

The two representations therefore have the same law, with no additional $\sqrt2$. By \eqref{eq:original-P-1} the required output is

\begin{equation}
\boxed{\frac{F_T(L)}{\sqrt{m_T(L)}}\longrightarrow\kappa_1,
\qquad 0<\kappa_1\le\frac{\sqrt\pi}{2}.}
\tag{P.33}\label{eq:original-P-33}
\end{equation}

Only the periodic Gaussian mean limit is established here. The square
comparison, the empirical replacement, and the almost-sure upgrade are
separate steps. In particular, \eqref{eq:original-P-33} is not by itself
an almost-sure statement for one empirical sample.

For transparency, the cited statements used below are exactly the
following: \cite[Theorem~2.2]{AGGT} and its variational identification of
the constant; the bounded extensions and duality for the scale projection
in \cite[Lemma~2.4]{AGGT}; the local gradient truncation in
\cite[Theorem~4]{Muller}; classical periodic Beckmann duality; the
Gaussian martingale representation and It\^o isometry; Hahn--Banach
distance and Banach--Alaoglu; periodic Hodge decomposition, Poincar\'e,
Rellich compactness, Sobolev embedding, and gradient locality; and
Jankov--von Neumann selection. Their relevant statements and
applicability are specified where they are used, but their foundational
proofs are not presented as new research conclusions. Standard Sobolev
tools follow \cite{AdamsFournier}; gradient locality follows
\cite{EvansGariepy}; white noise and Wiener integration follow
\cite{Janson}. The martingale representation used is
\cite[eq.~(2.7)]{AGGT}, reproduced here as \eqref{eq:original-P-7}.

The cited truncation theorem \cite[Theorem~4]{Muller} outputs
$W^{1,1}_{\mathrm{loc}}$. The global gradient-distance bound above then
supplies the required locally Lipschitz representative, while preserving
the hypotheses and the exterior-boundary clause of that theorem.

Finally, \eqref{eq:original-P-33} serves as the periodic input for the
square Gaussian comparison. It gives
\[
F_T(L)/\sqrt{m_T(L)}\to\kappa_1,
\]
and the endpoint passage has been derived from the fixed
finite-exponent theory.

\section{Comparison of square and periodic Gaussian widths}\label{sec:comparison}

Changing from a torus to a square changes both the source law and the admissible test functions. We compare the sources using a common white noise, and compare the tests by localization followed by a boundary modification. The latter has three costs: localization, global rescaling, and modification in a boundary strip. All three are retained in the parameter choice. The comparison proves that the boundary contributes only a lower-order error.

\begin{proposition}[Boundary comparison and the Neumann limit]\label{prop:Neumann}
For every sufficiently large real $L$ there are $C_0,C_1<\infty$ independent of $L$ such that
\begin{equation}
F_T(L)-C_0\le F_N(L)\le F_T(L)+C_0+C_1m_N(L)^{1/3}.
\end{equation}
Consequently $F_N(L)/\sqrt{m_N(L)}\to\kappa_1$.
\end{proposition}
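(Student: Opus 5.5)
The plan is to place the Neumann white noise on $Q$ and the periodic white noise on $\mathbb T^2$ on a common probability space, and then compare the two support functionals through a third object: the Lipschitz dual of a single field against the two test classes. Recall $F_N(L)=\mathbb E\,\mathcal S(\xi_N^L)$ with $\xi_N^L=\sum_{0<\lambda_k\le L}G_k\varphi_k$, and $F_T(L)=\mathbb E\,T(\xi_T^L)$. Both are suprema of linear pairings over unit Lipschitz tests, so each inequality reduces to two tasks: transferring the source from one geometry to the other, and transferring near-optimal tests.

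\emph{Lower bound $F_T(L)-C_0\le F_N(L)$.} Every periodic unit Lipschitz test restricts to a unit Lipschitz test on $Q$, because $d_T\le|\cdot|$. Hence $T(g)\le\mathcal S(g|_Q)$ for any mean-zero $g$ on $Q$. It therefore suffices to couple $\xi_T^L$ with a Neumann-type field whose $H_N^{-1}$ distance to it has bounded expectation, and then use \eqref{eq:original-P18}. To build the coupling, take one white noise $W$ on $Q$. Define $\xi_N^L$ as its Neumann spectral projection and $\xi_T^L$ as its periodic spectral projection, noting that $L^2(Q)=L^2(\mathbb T^2)$ as Hilbert spaces, so each projection of white noise has the correct law. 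Then estimate $\mathbb E\|J_N(\xi_T^L-\xi_N^L)\|_2^2$, or, if that is not $O(1)$, first heat-smooth both fields at time $t\sim 1/L$, paying the cost through \eqref{eq:original-P39} and \eqref{eq:original-P41}.

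The cleaner route uses the Dirichlet-form orthogonality: $\mathcal S(f)\le\|f\|_{H_N^{-1}}$, and $\mathbb E\|\Pi^N_{\le L}W-\Pi^T_{\le L}W\|_{H_N^{-1}}^2$ is a trace of the form $\operatorname{tr}\bigl((\Pi^N-\Pi^T)(-\Delta_N)^{-1}(\Pi^N-\Pi^T)\bigr)$. The two spectral cutoffs agree at leading order, by \eqref{eq:original-P21}--\eqref{eq:original-P22}, up to the mismatch of low modes and a shell near $L$. I would aim to bound this trace by $O(1)$ using the counting asymptotics and \eqref{eq:original-P26}. If that trace turns out to be only logarithmically small rather than $O(1)$, I would instead compare each field with the common heat-smoothed white noise $P_{1/L}W$ in each geometry. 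The cost of this comparison is a difference of inverse-spectrum traces, which is $O(1)$ by \eqref{eq:original-P25}.

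\emph{Upper bound.} Take a near-optimal square test $u\in\mathcal K$ for $\xi_N^L$ and convert it into a periodic test. The plan is to localize to the interior square $[\delta,1-\delta]^2$ and modify $u$ in a boundary strip of width $\delta$ so that it becomes constant (equal to its boundary average) near $\partial Q$. This produces a periodic Lipschitz function with constant at most $1+C/\delta$ that agrees with $u$ off the strip. Rather than dilating, I would interpolate linearly toward a fixed constant over the strip. Rescaling by $(1+C\delta)^{-1}$ restores admissibility, at the cost of $C\delta\,\mathcal S(\xi_N^L)\le C\delta\sqrt{m_N}$ by \eqref{eq:original-P17}. The strip modification costs $\mathbb E\int_{\text{strip}}|\nabla(u-\tilde u)||J_N|$, which is roughly $\sqrt\delta\cdot$ (the energy of $J_N$ in the strip). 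I would bound the strip flux energy by a localized trace of order $\delta\,m_N(L)+C$, using the pointwise bound \eqref{eq:original-P8} to spread the energy evenly. Crude Cauchy--Schwarz control of the strip pairing then gives a term of order $\delta^{-1/2}\cdot$ (something). Optimizing $\delta$ against $\delta\sqrt{m_N}$ and the strip term should produce the exponent $m_N(L)^{1/3}$, for example via $\delta=m_N^{-1/6}$. Finally, the source transfer from the lower-bound step adds $C_0$.

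\emph{Main obstacle and conclusion.} The hardest step is the strip estimate. There the periodic flux of the coupled field must be paired with a nonperiodic modification, and \eqref{eq:original-P19} explicitly does not apply. I would first try to work entirely with the Neumann flux on $Q$, for which \eqref{eq:original-P15} has no boundary term, and transfer the source only at the end. Once both inequalities hold, divide by $\sqrt{m_N(L)}$. The term $m_N^{1/3}/\sqrt{m_N}$ tends to $0$, and $m_T/m_N\to1$ by \eqref{eq:original-P26}. Together with \eqref{eq:original-P-33}, this gives $F_N(L)/\sqrt{m_N(L)}\to\kappa_1$.
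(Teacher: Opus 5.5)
Your upper bound fails at its central step. A test $u\in\mathcal K$ can oscillate by order one along $\partial Q$ (up to $\sqrt2$). Any modification that makes it constant across a strip of width $\delta$ therefore has gradient of order $\operatorname{osc}(u)/\delta$ there. You record this yourself as a Lipschitz constant $1+C/\delta$. The admissible rescaling is then $(1+C/\delta)^{-1}\approx\delta/C$, not $(1+C\delta)^{-1}$, and it discards essentially all of $\mathcal S(\xi_N^L)$. Equivalently, your Cauchy--Schwarz strip estimate gives $\delta^{-1/2}\cdot(\delta m_N)^{1/2}=\sqrt{m_N}$, which is of leading order for every $\delta$. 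So no samplewise repair of a near-optimal test can work this way.

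The missing idea is a reduction, in expectation only, to tests of small oscillation. Cover $\mathcal K$ by $N_\varepsilon$ uniform balls $A_i$ of radius $\varepsilon$, with $\log N_\varepsilon\le C\varepsilon^{-2}$. Each $H_L(A_i)=\sup_{u\in A_i}\langle\xi_N^L,u\rangle$ is a $\sqrt2$-Lipschitz function of the finitely many Gaussian coefficients. Gaussian concentration for a maximum of $N_\varepsilon$ such variables then gives $F_N(L)\le\max_i\E H_L(A_i)+C\varepsilon^{-1}$. Since $\xi_N^L\overset{\mathrm{law}}{=}-\xi_N^L$, one has $\E H_L(A_i)=\E H_L\bigl((A_i-A_i)/2\bigr)$. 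The set $(A_i-A_i)/2$ consists of mean-zero unit-Lipschitz tests of oscillation at most $2\varepsilon$. Only for these does $w=s\chi_\delta\bigl(u-\tfrac12(\max_Qu+\min_Qu)\bigr)$, with $s=(1+\varepsilon/\delta)^{-1}$, lie in the zero-boundary class $\mathcal K_0$, which periodizes trivially. The expected cost is $(\varepsilon/\delta+8\delta)\sqrt{m_N}$. The exponent $1/3$ comes from balancing the three losses $\varepsilon^{-1}$, $(\varepsilon/\delta)\sqrt{m_N}$ and $\delta\sqrt{m_N}$ at $\varepsilon=m_N^{-1/3}$, $\delta=m_N^{-1/6}$. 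Your two-term balance cannot produce it.

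The source transfer, needed in both directions, is also unproved. The sharp Neumann and periodic cutoffs project onto different eigenbases: odd-index cosines are not periodic, and periodic sines have Neumann expansions spread over all frequencies. So \eqref{eq:original-P21}--\eqref{eq:original-P22} and $m_T-m_N=O(1)$ say nothing about $\E\|\Pi^N_{\le L}W-\Pi^T_{\le L}W\|_{H_N^{-1}}^2$. That quantity involves the overlap sums $\sum_{\lambda_k\le L}\lambda_k^{-1}\|\Pi^T_{\le L}\varphi_k\|_2^2$ and $\sum_j\|\psi_j\|_{H_N^{-1}}^2$, which counting does not control. There is also no common field $P_{1/L}W$: $P^N_tW$ and $P^T_tW$ differ in a boundary layer of width $\sqrt t$, and estimating that difference is the real content of the comparison.

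The paper defines $\eta_D(t,x)=W(p_D(t,x,\cdot)-1)$ from one noise and uses the method of images to get $\|p_N(t,x,\cdot)-p_T(t,x,\cdot)\|_2\le Ct^{-1/2}\bigl[e^{-d(x_1)^2/(8t)}+e^{-d(x_2)^2/(8t)}\bigr]$. Hence $\E\|\eta_N(t)-\eta_T(t)\|_1\le C$ for $0<t\le1$. It then uses only $L^1$-stability of $\mathcal S$ and $\mathcal Z$, whose tests are uniformly bounded. The sharp-versus-heat errors are bounded by $\sum_jc_j(L)^2/\lambda_j\le C$ using each geometry's own flux, with the periodic flux used only for $\mathcal T$. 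Your inclusion $\mathcal T\le\mathcal S$ matches the paper's lower bound. An $H_N^{-1}$ transfer might be provable, e.g.\ via $\|\psi_j\|_{H_N^{-1}}\le C\lambda_j^{-1/2}$ from the trace inequality plus a heat-kernel difference estimate. As written, however, it is asserted with a justification that cannot establish it.
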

We prove every comparison step, including test localization, boundary modification, common-noise heat sources, and the errors of the two spectral cutoffs.

We start from the established square Neumann and periodic spectra, flux bounds, and spectral counting. The parameter $L$ is always an eigenvalue cutoff, and each estimate applies to every sufficiently large real $L$.

\subsection{Common white noise and internally independent coefficients}

Identify $L^2(Q,dx)$ and $L^2(\mathbb T^2,dx)$ as functions on the fundamental square; its null boundary leaves the Hilbert space unchanged. On this real Hilbert space take an isonormal Gaussian process $W$, a linear map satisfying

\begin{equation*}
\mathbb EW(a)=0,\qquad
\mathbb E[W(a)W(b)]=\langle a,b\rangle_{L^2(Q)}.
\end{equation*}

Explicitly, choose any real orthonormal basis $(e_j)$ and independent standard real Gaussian variables $(G_j)$, and define

\begin{equation*}
W(a)=\sum_jG_j\langle a,e_j\rangle.
\end{equation*}

The series converges in the $L^2$ norm of random variables because a tail's expected square is $\sum_{j>J}|\langle a,e_j\rangle|^2\to0$. Parseval yields the stated covariance.

On the Neumann basis set $g_k=W(\varphi_k)$. These variables are jointly Gaussian with covariance $\delta_{kl}$, hence independent, each with law $N(0,1)$. The same argument in the periodic real basis gives independent standard real coefficients. The two families may be correlated with each other; only each model's prescribed internal law matters. This constructs all coefficients at once and preserves joint laws across every cutoff within each model.

Let

\begin{equation*}
\xi_N^L=\sum_{0<\lambda_k\le L}g_k\varphi_k,
\qquad
m_N(L)=\sum_{0<\lambda_k\le L}\lambda_k^{-1}.
\end{equation*}

Similarly, the periodic field $\xi_T^L$ retains all real eigenbasis modes with positive eigenvalue at most $L$; denote its inverse Laplacian trace by $m_T(L)$.

\subsection{Three test classes, finiteness, and measurability}

Define

\begin{equation*}
\begin{aligned}
\mathcal K&=\{u\in C(Q):\operatorname{Lip}_Q(u)\le1,\ \int_Q u=0\},\\
\mathcal K_T&=\{u\in C(\mathbb T^2):\operatorname{Lip}_{\mathbb T^2}(u)\le1,\ \int_Q u=0\},\\
\mathcal K_0&=\{w\in C(Q):\operatorname{Lip}_Q(w)\le1,\ w|_{\partial Q}=0\}.
\end{aligned}
\end{equation*}

There is no mean-zero constraint in $\mathcal K_0$. For a mean-zero source $f$ write

\begin{equation*}
\mathcal S(f)=\sup_{u\in\mathcal K}\langle f,u\rangle,
\quad
\mathcal T(f)=\sup_{u\in\mathcal K_T}\langle f,u\rangle,
\quad
\mathcal Z(f)=\sup_{w\in\mathcal K_0}\langle f,w\rangle.
\end{equation*}

For $u\in\mathcal K$, mean-zero normalization and area one of $Q$ give

\begin{equation*}
|u(x)|=\left|\int_Q(u(x)-u(y))\,dy\right|
\le\int_Q|x-y|\,dy\le\sqrt2.
\end{equation*}

The periodic class has the same bound, since quotient distance is at most square Euclidean distance. For $w\in\mathcal K_0$ choose a nearest boundary point $y$; then

\begin{equation*}
|w(x)|=|w(x)-w(y)|\le\operatorname{dist}(x,\partial Q)\le\frac12.
\end{equation*}

All three classes are uniformly bounded, equicontinuous, and closed in the uniform norm, hence compact by Arzel\`a--Ascoli. At finite cutoff, the pairing is a continuous function of finite Gaussian coefficients, maximized over a compact class. Its supremum is finite and continuous.

For any test class $A$ and $L^1$ source $f,h$, each test satisfies

\begin{equation*}
\langle f,u\rangle\le\langle h,u\rangle
+\|u\|_\infty\|f-h\|_1.
\end{equation*}

Take the supremum and interchange $f,h$ to obtain

\begin{equation*}
\left|\sup_{u\in A}\langle f,u\rangle-
\sup_{u\in A}\langle h,u\rangle\right|
\le\left(\sup_{u\in A}\|u\|_\infty\right)\|f-h\|_1.
\end{equation*}

All three support functions are continuous in the $L^1$ source norm. The heat sources' $L^1$ measurability follows from the mean-square spectral expansions below, giving measurable support functions. Alternatively, a countable dense subset of the compact test class has the same supremum under continuous pairing; no random maximizing test need be selected.

\subsection{Inclusions of zero-boundary, periodic, and square tests}

For $w\in\mathcal K_0$ extend it to $\mathbb R^2$ by integer translates. Values on adjacent seams are zero, so the extension is continuous. Grid lines split any finite straight segment into finitely many pieces. Within a square, variation is at most length; on grid lines the function is zero. Summing gives

\begin{equation*}
|\widetilde w(x)-\widetilde w(y)|\le|x-y|.
\end{equation*}

The extension is $\mathbb Z^2$-periodic, so for each $z\in\mathbb Z^2$,

\begin{equation*}
|\widetilde w(x)-\widetilde w(y)|
=|\widetilde w(x)-\widetilde w(y+z)|\le|x-y-z|.
\end{equation*}

Minimizing over $z$ gives the unit Lipschitz bound for quotient distance. Subtracting the spatial mean places the test in $\mathcal K_T$, while $\int f=0$ preserves its pairing. Thus $\mathcal Z(f)\le\mathcal T(f)$.

Conversely, $d_T(x,y)\le|x-y|$, so a periodic test restricted to $Q$ belongs to $\mathcal K$. Hence

\begin{equation*}
\mathcal Z(f)\le\mathcal T(f)\le\mathcal S(f).
\end{equation*}

This is only inclusion of test classes; it does not pair a periodic flux with arbitrary square tests by boundary-free integration by parts.

\subsection{Finite-dimensional Gaussian exponential moments and finite suprema}

We first prove the concentration estimate used here. Let $G\sim N(0,I_d)$ and let $X:\mathbb R^d\to\mathbb R$ be $R$-Lipschitz. We claim

\begin{equation*}
\mathbb E e^{a(X(G)-\mathbb EX(G))}\le e^{a^2R^2/2}
\qquad(a\in\mathbb R).
\end{equation*}

Take standard Brownian motion $B_t$ with covariance $tI_d$. The Euclidean heat flow here has generator $\Delta/2$ and is denoted $P_t^{\mathrm B}$; distinguish it from the spatial-source generator $\Delta$. Set

\begin{equation*}
v(t,x)=P_{1-t}^{\mathrm B}X(x),\qquad 0\le t<1.
\end{equation*}

Heat convolution preserves the $R$ Lipschitz constant: using the same Gaussian displacement $Z$,

\begin{equation*}
|\mathbb EX(x+Z)-\mathbb EX(y+Z)|\le R|x-y|.
\end{equation*}

At positive time heat convolution is smooth, so $|\nabla_xv|\le R$ and $\partial_tv+\frac12\Delta v=0$. Brownian independent increments give

\begin{equation*}
M_t=v(t,B_t)=\mathbb E[X(B_1)\mid\mathcal F_t].
\end{equation*}

The drift in It\^o's formula cancels, yielding

\begin{equation*}
M_t-M_0=\int_0^t\nabla v(s,B_s)\cdot dB_s,
\qquad
\langle M\rangle_t=\int_0^t|\nabla v(s,B_s)|^2ds\le R^2t.
\end{equation*}

The bounded integrand satisfies the exponential-martingale integrability condition. Hence

\begin{equation*}
\mathbb E\exp\left(a(M_t-M_0)-\frac{a^2}2\langle M\rangle_t\right)=1.
\end{equation*}

Thus $\mathbb Ee^{a(M_t-M_0)}\le e^{a^2R^2t/2}$. Lipschitz growth makes $X(B_1)$ integrable; as $t\uparrow1$ the conditional-expectation martingale converges to $X(B_1)$ and $M_0=\mathbb EX(G)$. Fatou for the nonnegative exponentials yields the endpoint estimate. The inputs are It\^o's formula and the exponential-martingale theorem for bounded integrands; $X$ need not itself be differentiable.

Now suppose $X_1,\ldots,X_N$ are $R$-Lipschitz functions of the same Gaussian vector. For $a>0$,

\begin{equation*}
\begin{aligned}
\exp\left(a\max_i(X_i-\mathbb EX_i)\right)
&\le\sum_i\exp\left(a(X_i-\mathbb EX_i)\right),\\
\mathbb E\max_i(X_i-\mathbb EX_i)
&\le\frac1a\log\mathbb E e^{a\max_i(X_i-\mathbb EX_i)}\\
&\le\frac{\log N}{a}+\frac{aR^2}2.
\end{aligned}
\end{equation*}

The first line is the deterministic finite-sum bound, the second is Jensen, and the third uses the proved exponential moment. Set $a=\sqrt{2\log N}/R$ to obtain

\begin{equation*}
\mathbb E\max_iX_i\le\max_i\mathbb EX_i+R\sqrt{2\log N}.
\end{equation*}

The cases $N=1$ or $R=0$ are immediate. No independence between distinct $X_i(G)$ is used.

\subsection{Construction of a uniform grid cover}

Take $0<\varepsilon<1$ and set $M=\lceil16/\varepsilon\rceil$ and $h=1/M$. At the $(M+1)^2$ grid vertices round function values to the nearest integer multiple of $h$, using a fixed tie rule.

At the root vertex values lie in $[-\sqrt2,\sqrt2]$, so there are at most $C/h$ rounded values. Choose a nearest-neighbor spanning tree of the grid. Adjacent original values differ by at most $h$; each rounding contributes at most $h/2$, so rounded differences are at most $2h$. Given the parent's value, the child has the five relative possibilities $-2h,-h,0,h,2h$. The number of nonempty rounding classes is at most

\begin{equation*}
N_\varepsilon\le\frac Ch\,5^{(M+1)^2-1}.
\end{equation*}

Since $M\le17/\varepsilon$,

\begin{equation*}
\log N_\varepsilon\le\log(C/h)+((M+1)^2-1)\log5
\le C\varepsilon^{-2}.
\end{equation*}

Functions $u,v$ in the same class differ at vertices by at most $h$. Every $x\in Q$ is within $\sqrt2h$ of a vertex $x_0$. Thus

\begin{equation*}
|u(x)-v(x)|\le|u(x)-u(x_0)|+|u(x_0)-v(x_0)|+|v(x_0)-v(x)|
\le(1+2\sqrt2)h<\varepsilon.
\end{equation*}

In each nonempty class choose a representative $u_i$ actually belonging to $\mathcal K$. The sets

\begin{equation*}
A_i=\mathcal K\cap\overline B_\infty(u_i,\varepsilon)
\end{equation*}

are nonempty, compact, and cover $\mathcal K$. These are original tests rather than arbitrary grid interpolants, so interpolation cannot violate the Lipschitz constraint.

\subsection{Small-amplitude localization of the expected Gaussian width}

Write

\begin{equation*}
H_L(A)=\sup_{u\in A}\langle\xi_N^L,u\rangle,
\qquad h_L(A)=\mathbb EH_L(A).
\end{equation*}

For nonempty compact $A$, linear pairing gives samplewise

\begin{equation*}
\begin{aligned}
H_L((A-A)/2)
&=\frac12\sup_{u,v\in A}
\left(\langle\xi_N^L,u\rangle-\langle\xi_N^L,v\rangle\right)\\
&=\frac12\left(H_L(A)+H_L(-A)\right).
\end{aligned}
\end{equation*}

The variables $\xi_N^L$ and $-\xi_N^L$ have the same law, so $h_L(-A)=h_L(A)$ and consequently $h_L((A-A)/2)=h_L(A)$.

For $u,v\in A_i$, the quantity $a=(u-v)/2$ satisfies

\begin{equation*}
\int_Qa=0,\qquad\operatorname{Lip}(a)\le1,
\qquad\|a\|_\infty\le\varepsilon,
\qquad\operatorname{osc}(a)\le2\varepsilon.
\end{equation*}

Define

\begin{equation*}
\mathcal K_\varepsilon=\{u\in\mathcal K:\operatorname{osc}(u)\le2\varepsilon\},
\qquad F_\varepsilon(L)=\mathbb EH_L(\mathcal K_\varepsilon).
\end{equation*}

Then $h_L(A_i)=h_L((A_i-A_i)/2)\le F_\varepsilon(L)$.

View the finite cutoff coefficients as a Euclidean vector $g$. For $g,g'$,

\begin{equation*}
|H_L(A_i;g)-H_L(A_i;g')|
\le|g-g'|\sup_{u\in A_i}
\left(\sum_{\lambda_k\le L}|\langle u,\varphi_k\rangle|^2\right)^{1/2}
\le\sqrt2|g-g'|.
\end{equation*}

The last step uses Bessel and $\|u\|_2\le\|u\|_\infty\le\sqrt2$. Apply the preceding finite-supremum estimate to $H_L(A_i)$ and use $H_L(\mathcal K)=\max_iH_L(A_i)$ to obtain

\begin{equation*}
\begin{aligned}
F_N(L)&:=\mathbb E\mathcal S(\xi_N^L)
\le F_\varepsilon(L)+\sqrt2\sqrt{2\log N_\varepsilon}
\le F_\varepsilon(L)+C\varepsilon^{-1},\\
0&\le F_N(L)-F_\varepsilon(L)\le C\varepsilon^{-1}.
\end{aligned}
\end{equation*}

This estimates expected width; it does not assert that a samplewise optimal test can be replaced without loss by a small-amplitude test.

\subsection{All errors in the zero-boundary modification}

Write $F_{0,N}(L)=\mathbb E\mathcal Z(\xi_N^L)$, where $\mathcal Z$ is the zero-boundary support function already defined.

Let $m=m_N(L)$ and $J_L=\nabla(-\Delta_N)^{-1}\xi_N^L$. The flux bound and gradient orthogonality give

\begin{equation*}
\mathbb E\|J_L\|_2^2=m,
\qquad F_N(L)\le\mathbb E\|J_L\|_2\le\sqrt m.
\end{equation*}

The explicit cosine formula for $\varphi_k$ gives pointwise $|\nabla\varphi_k|^2\le4\lambda_k$, hence

\begin{equation*}
\mathbb E|J_L(x)|^2
=\sum_{\lambda_k\le L}\frac{|\nabla\varphi_k(x)|^2}{\lambda_k^2}
\le4m,
\qquad \mathbb E|J_L(x)|\le2\sqrt m.
\end{equation*}

Take $0<\delta<1/2$ and define

\begin{equation*}
B_\delta=\{x:\operatorname{dist}(x,\partial Q)<\delta\},
\qquad\chi_\delta(x)=\min\{1,\operatorname{dist}(x,\partial Q)/\delta\}.
\end{equation*}

We have $|B_\delta|=1-(1-2\delta)^2\le4\delta$, $\chi_\delta=1$ on the complement, and $|\nabla\chi_\delta|\le1/\delta$ almost everywhere.

For $u\in\mathcal K_\varepsilon$ set

\begin{equation*}
v=u-\frac{\max_Q u+\min_Q u}{2},
\qquad s=\frac1{1+\varepsilon/\delta},
\qquad w=s\chi_\delta v.
\end{equation*}

Then $\|v\|_\infty\le\varepsilon$ and

\begin{equation*}
|\nabla w|\le s\left(|\nabla v|+|v||\nabla\chi_\delta|\right)
\le s(1+\varepsilon/\delta)=1.
\end{equation*}

On the convex square this gradient bound gives a unit Lipschitz continuous representative. Since $w$ vanishes on the boundary, $w\in\mathcal K_0$.

Expand the gradient difference term by term:

\begin{equation*}
\nabla(v-w)=(1-s)\nabla u
+s(1-\chi_\delta)\nabla u-sv\nabla\chi_\delta.
\end{equation*}

The source is mean zero, so subtracting constants leaves pairings unchanged. Neumann flux integration by parts bounds the first term by $(1-s)\mathcal S(\xi_N^L)$. The remaining two terms contribute only on $B_\delta$, with total absolute value at most

\begin{equation*}
s(1+\varepsilon/\delta)\int_{B_\delta}|J_L|=
\int_{B_\delta}|J_L|.
\end{equation*}

Thus samplewise, simultaneously for all $u$,

\begin{equation*}
|\langle\xi_N^L,u-w\rangle|
\le(1-s)\mathcal S(\xi_N^L)+\int_{B_\delta}|J_L|.
\end{equation*}

The first term is a global rescaling error and cannot be restricted to the boundary strip. Since $\langle\xi_N^L,w\rangle\le\mathcal Z(\xi_N^L)$, taking the supremum over $u$ and expectation gives

\begin{equation*}
\begin{aligned}
F_\varepsilon(L)-F_{0,N}(L)
&\le(1-s)F_N(L)+\int_{B_\delta}\mathbb E|J_L(x)|dx\\
&\le\frac\varepsilon\delta\sqrt m+2|B_\delta|\sqrt m\\
&\le\left(\frac\varepsilon\delta+8\delta\right)\sqrt m,
\end{aligned}
\end{equation*}

We used Tonelli and $1-s=\varepsilon/(\delta+\varepsilon)\le\varepsilon/\delta$ here.

Combine this with small-amplitude localization. For $m>64$ choose

\begin{equation*}
\varepsilon=m^{-1/3},\qquad\delta=m^{-1/6}.
\end{equation*}

The three losses are respectively

\begin{equation*}
\varepsilon^{-1}=m^{1/3},\quad
\frac\varepsilon\delta\sqrt m=m^{1/3},\quad
\delta\sqrt m=m^{1/3}.
\end{equation*}

Subtracting the mean of $w\in\mathcal K_0$ places it in $\mathcal K$. Finally,

\begin{equation*}
0\le F_N(L)-F_{0,N}(L)\le C m_N(L)^{1/3}.
\end{equation*}

\subsection{Definition of heat sources and their zero mean}

For $D=N,T$ take the spatial heat kernel $p_D$ with generator $\Delta$ and define

\begin{equation*}
\eta_D(t,x)=W(p_D(t,x,\cdot)-1).
\end{equation*}

If $\psi_j$ is the model's positive-eigenvalue real basis, then

\begin{equation*}
\eta_D(t)=\sum_{\lambda_j>0}e^{-t\lambda_j}W(\psi_j)\psi_j.
\end{equation*}

Spectral counting $N_D(a)\le Ca$ and exponential decay make every finite-order Sobolev second moment summable for fixed $t>0$. Finite spectral projections converge in the corresponding mean-square Sobolev spaces; Sobolev embedding gives a smooth measurable version. The constant mode has been removed. Alternatively, mass one and self-adjointness give directly

\begin{equation*}
\int_Q\eta_D(t,x)dx
=W\left(\int_Qp_D(t,x,\cdot)dx-1\right)=0.
\end{equation*}

The Bochner integral passes through the continuous linear isometry $W:L^2(Q)\to L^2(\mathbb P)$, justifying this interchange with the random integral.

\subsection{One-dimensional heat-kernel images and uniform tail sums}

Set $G_t(z)=(4\pi t)^{-1/2}e^{-z^2/(4t)}$. The one-dimensional Neumann and periodic kernels are

\begin{equation*}
a_t(x,y)=\sum_{j\in\mathbb Z}[G_t(x-y+2j)+G_t(x+y+2j)],
\qquad
b_t(x,y)=\sum_{j\in\mathbb Z}G_t(x-y+j).
\end{equation*}

The first comes from Gaussian displacement followed by even, two-periodic folding; the second from reducing displacement modulo one. Both kernels have mass one, with cosine and Fourier heat spectra respectively. At fixed $t>0$ the exponential image tails are summable, justifying these series and the $L^2$ estimates below.

If an image term $z=z(y)$ satisfies $|z'|=1$ and $|z(y)|\ge D$, then

\begin{equation*}
\begin{aligned}
\int_0^1G_t(z(y))^2dy
&=(4\pi t)^{-1}\int_0^1e^{-z(y)^2/(2t)}dy\\
&\le(4\pi t)^{-1}e^{-D^2/(4t)}
\int_{\mathbb R}e^{-z^2/(4t)}dz\\
&\le Ct^{-1/2}e^{-D^2/(4t)}.
\end{aligned}
\end{equation*}

Its $L^2_y$ norm is therefore at most $Ct^{-1/4}e^{-D^2/(8t)}$.

All direct even translations in $a_t-b_t$ cancel exactly, leaving

\begin{equation*}
a_t-b_t=\sum_{j\in\mathbb Z}G_t(x+y+2j)
-\sum_{j\in2\mathbb Z+1}G_t(x-y+j).
\end{equation*}

For $x,y\in[0,1]$, the four remaining image branches have the following distances, where $r\in\mathbb N_0$:

\begin{longtable}{p{0.3033\linewidth}p{0.3033\linewidth}p{0.3033\linewidth}}
\toprule
Image term & Range as $y$ varies & Nearest distance to zero\\
$x+y+2r$ & $[x+2r,x+1+2r]$ & $x+2r$\\
$x+y-2r-2$ & $[x-2r-2,x-2r-1]$ & $1-x+2r$\\
$x-y+2r+1$ & $[x+2r,x+2r+1]$ & $x+2r$\\
$x-y-2r-1$ & $[x-2r-2,x-2r-1]$ & $1-x+2r$\\\bottomrule
\end{longtable}

Since $(a+2r)^2\ge a^2+4r^2$, for $0<t\le1$,

\begin{equation*}
\sum_{r\ge0}e^{-(a+2r)^2/(8t)}
\le e^{-a^2/(8t)}\sum_{r\ge0}e^{-r^2/(2t)}
\le C e^{-a^2/(8t)}.
\end{equation*}

Apply the $L^2$ triangle inequality to finite sums, then pass to the infinite series using summability of norm tails. This gives

\begin{equation*}
\|a_t(x,\cdot)-b_t(x,\cdot)\|_2
\le Ct^{-1/4}[e^{-x^2/(8t)}+e^{-(1-x)^2/(8t)}]
\le Ct^{-1/4}e^{-d(x)^2/(8t)},
\end{equation*}

Here $d(x)=\min(x,1-x)$, and the final $C$ absorbs both terms.

For each kernel, the nearest direct image has norm at most $Ct^{-1/4}$; other direct even translations are at distance at least $2|j|-1$ from zero. Reflections and odd translations obey the table bounds. Summing separately yields

\begin{equation*}
\|a_t(x,\cdot)\|_2+\|b_t(x,\cdot)\|_2\le Ct^{-1/4}.
\end{equation*}

The endpoints $x=0,1$ are included. The proof does not require remaining images to be at strictly positive distance from zero.

\subsection{Two-dimensional corners and the expected source error}

The two-dimensional kernels are coordinate products. Writing $a_i=a_t(x_i,y_i)$ and $b_i=b_t(x_i,y_i)$ gives the exact identity

\begin{equation*}
a_1a_2-b_1b_2=(a_1-b_1)a_2+b_1(a_2-b_2).
\end{equation*}

The $L^2$ norm of a tensor product is the product of the factor norms. Thus

\begin{equation*}
\|p_N(t,x,\cdot)-p_T(t,x,\cdot)\|_2
\le Ct^{-1/2}\left[e^{-d(x_1)^2/(8t)}+e^{-d(x_2)^2/(8t)}\right].
\end{equation*}

Expanding $a_i=b_i+\delta_i$ shows that the corner term $\delta_1\delta_2$ is already included in $\delta_1a_2$; no additional term is omitted.

Common white noise cancels the constant mode exactly, and isometry gives

\begin{equation*}
\mathbb E|\eta_N(t,x)-\eta_T(t,x)|^2
=\|p_N(t,x,\cdot)-p_T(t,x,\cdot)\|_2^2.
\end{equation*}

Its mean absolute value is therefore bounded by this row norm. Tonelli and

\begin{equation*}
\int_0^1e^{-d(x)^2/(8t)}dx
=2\int_0^{1/2}e^{-r^2/(8t)}dr\le C\sqrt t,
\end{equation*}

give

\begin{equation*}
\mathbb E\|\eta_N(t)-\eta_T(t)\|_1
\le Ct^{-1/2}(2C\sqrt t)\le C,
\qquad0<t\le1.
\end{equation*}

This proves $\sup_t\mathbb E\|\eta_N-\eta_T\|_1\le C$ without needing a bound for $\mathbb E\sup_t$.

\subsection{All spectral errors between sharp and heat cutoffs}

For either model define $c_j(L)=\boldsymbol1_{\{\lambda_j\le L\}}-e^{-\lambda_j/L}$. On low frequencies use $1-e^{-a}\le a$:

\begin{equation*}
\sum_{\lambda_j\le L}\frac{c_j(L)^2}{\lambda_j}
\le L^{-2}\sum_{\lambda_j\le L}\lambda_j
\le\frac{N_D(L)}L\le C.
\end{equation*}

On high frequencies use half-open bands $(2^rL,2^{r+1}L]$, with $r\ge0$. Their contribution is at most

\begin{equation*}
\frac{e^{-2^{r+1}}}{2^rL}N_D(2^{r+1}L)
\le C e^{-2^{r+1}}.
\end{equation*}

Summing over $r$ gives

\begin{equation*}
\sum_{\lambda_j>0}\frac{c_j(L)^2}{\lambda_j}\le C.
\end{equation*}

A spectral point equal to $L$ belongs to the low-frequency part; every other positive eigenvalue lies in exactly one high-frequency band. Thus this estimate covers all sufficiently large real $L$ directly.

Let $h_D^L=\xi_D^L-\eta_D(1/L)$. Gradient orthogonality and standard Gaussian variance give

\begin{equation*}
\mathbb E\|J_D(h_D^L)\|_2^2
=\sum_{\lambda_j>0}\frac{c_j(L)^2}{\lambda_j}\le C.
\end{equation*}

For fixed $L$, the source and flux heat tails converge in their mean-square $L^2$ spaces. Uniform $\|u\|_2$ bounds control source pairings for every test, and $\|\nabla u\|_2\le1$ controls flux pairings. Finite-spectral integration by parts and support-function estimates therefore pass to the infinite heat expansion. Neither random optimizers nor a subsequence restriction of the final parameter limit is needed.

On the Neumann geometry the flux bound applies to $\mathcal S$ and $\mathcal Z$; on the periodic geometry only to $\mathcal T$. Hence

\begin{equation*}
\begin{aligned}
|F_N(L)-\mathbb E\mathcal S(\eta_N(1/L))|&\le C,\\
|F_{0,N}(L)-\mathbb E\mathcal Z(\eta_N(1/L))|&\le C,\\
|F_T(L)-\mathbb E\mathcal T(\eta_T(1/L))|&\le C.
\end{aligned}
\end{equation*}

\subsection{Combining the two comparison directions line by line}

Source stability in $L^1$ for the support functions and the proved heat-source error give

\begin{equation*}
|\mathbb E\mathcal S(\eta_N(t))-\mathbb E\mathcal S(\eta_T(t))|\le C,
\qquad
|\mathbb E\mathcal Z(\eta_N(t))-\mathbb E\mathcal Z(\eta_T(t))|\le C.
\end{equation*}

Set $t=1/L$. The first direction uses, in order, periodic cutoff stability, test inclusion, the source error, and Neumann cutoff stability:

\begin{equation*}
\begin{aligned}
F_T(L)&\le\mathbb E\mathcal T(\eta_T(t))+C\\
&\le\mathbb E\mathcal S(\eta_T(t))+C\\
&\le\mathbb E\mathcal S(\eta_N(t))+C\\
&\le F_N(L)+C.
\end{aligned}
\end{equation*}

The second direction first uses the zero-boundary loss and then source comparison:

\begin{equation*}
\begin{aligned}
F_N(L)&\le F_{0,N}(L)+Cm_N(L)^{1/3}\\
&\le\mathbb E\mathcal Z(\eta_N(t))+Cm_N(L)^{1/3}+C\\
&\le\mathbb E\mathcal Z(\eta_T(t))+Cm_N(L)^{1/3}+C\\
&\le\mathbb E\mathcal T(\eta_T(t))+Cm_N(L)^{1/3}+C\\
&\le F_T(L)+Cm_N(L)^{1/3}+C.
\end{aligned}
\end{equation*}

Since $m_N(L)\to\infty$, the final fixed constant can be absorbed. This proves

\begin{equation*}
-C\le F_N(L)-F_T(L)\le Cm_N(L)^{1/3}.
\end{equation*}

No periodic limit theorem has yet been used in this comparison.

\subsection{Real and complex coefficients and the precise periodic-limit interface}

For a nonzero physical frequency pair $\{k,-k\}$ the real orthonormal basis is $\sqrt2\cos(k\cdot x)$ and $\sqrt2\sin(k\cdot x)$. With standard real coefficients $A_k,B_k$ set

\begin{equation*}
z_k=\frac{A_k-iB_k}{\sqrt2},\qquad z_{-k}=\overline{z_k}.
\end{equation*}

Termwise expansion gives

\begin{equation*}
z_ke^{ik\cdot x}+\overline{z_k}e^{-ik\cdot x}
=\sqrt2A_k\cos(k\cdot x)+\sqrt2B_k\sin(k\cdot x).
\end{equation*}

The complex coefficient's real and imaginary parts each have variance $1/2$, as in the periodic endpoint theorem. Here $k\in2\pi\mathbb Z^2$ is a physical frequency with eigenvalue $|k|^2$, so the external wave-number cutoff is

\begin{equation*}
\Lambda=\sqrt L,\qquad m_\Lambda=m_T(L),\qquad
\xi_\Lambda\overset{\mathrm{law}}=\xi_T^L.
\end{equation*}

Section~\ref{sec:periodic} proved $F_T(L)/\sqrt{m_T(L)}\to\kappa_1$. Both spectral traces are $\log L/(4\pi)+O(1)$, hence $m_T(L)/m_N(L)\to1$. The full denominator conversion is

\begin{equation*}
\frac{F_N(L)}{\sqrt{m_N(L)}}
=\frac{F_T(L)}{\sqrt{m_T(L)}}
\sqrt{\frac{m_T(L)}{m_N(L)}}
+\frac{F_N(L)-F_T(L)}{\sqrt{m_N(L)}}.
\end{equation*}

The last term has absolute value $O(m_N(L)^{-1/6})\to0$. Therefore, for every real $L\to\infty$,

\begin{equation*}
\frac{F_N(L)}{\sqrt{m_N(L)}}\longrightarrow\kappa_1\in(0,\infty).
\end{equation*}

This gives a comparison rate between finite-cutoff widths, not a convergence rate for the periodic endpoint itself.

\section{Empirical mean reduction and the almost-sure limit for all sample sizes}\label{sec:empirical}

We first identify the mean and then prove the pathwise limit. Gaussian replacement takes place in flux coordinates, whose dimension grows with the spectral cutoff. The coarse cutoff leaves room for Gaussian replacement, while the two-scale heat argument keeps the transport cost of reaching that cutoff below the leading scale. The resulting mean asymptotic does not itself give almost-sure convergence: the concentration bound is summable on geometric subsequences, but not over all integers for small deviations. A martingale decomposition and the monotone compensator fill this gap.

\begin{proposition}[Empirical mean reduction and the probability upgrade]\label{prop:empirical}
For each fixed real $B>2$ take $L_n=n/(\log n)^B$. The actual empirical distance satisfies the introductory error bound~\eqref{eq:intro-reduction}, and its normalized mean converges to $c_*$. Along the same infinite iid sequence, the normalized distance converges almost surely to this constant over all integers.
\end{proposition}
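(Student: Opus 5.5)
The plan splits into the mean reduction \eqref{eq:intro-reduction} and the almost-sure upgrade.

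\textbf{Mean reduction.} Set $W_n=\mathcal S(\mu_n-\mu)$ by \eqref{eq:original-P5}, and use a two-scale heat argument with $t_0=1/L_n$. By \eqref{eq:original-P39} and \eqref{eq:original-P41},
\[
W_n\ge\mathcal S(P^N_{t_0}(\mu_n-\mu))
\quad\text{and}\quad
W_n\le\mathcal S(P^N_{t_0}(\mu_n-\mu))+\sqrt{\pi t_0}.
\]
The displacement error is $\sqrt{\pi t_0}=\sqrt{\pi}(\log n)^{B/2}/\sqrt n$, which is too large at the target scale. So I would not stop at $t_0$. I would instead telescope in heat time, over a dyadic family $t_j=2^{-j}$ from a tiny time $t_{\min}\approx n^{-1}\log n$ (where the $\sqrt{\pi t}$ cost is $O(\sqrt{\log n/n})\cdot o(1)$ after choosing constants, or simply $O(1/\sqrt n)\cdot$ slowly growing) up to $t_0$. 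On each band, the source $P^N_{t_j}f-P^N_{t_{j+1}}f$ with $f=\mu_n-\mu$ has $H^{-1}_N$ norm controlled via \eqref{eq:original-P17}. Its expected square is
\[
n^{-1}\sum_\lambda \lambda^{-1}\bigl(e^{-t_{j+1}\lambda}-e^{-t_j\lambda}\bigr)^2,
\]
since $\mathbb E|\langle\mu_n-\mu,\varphi_k\rangle|^2=1/n$, and this is $O(1/n)$ per band. Summing the $\sqrt{}$ of these over $\asymp\log\log n$ bands between $n^{-1}$-ish and $L_n^{-1}=(\log n)^B/n$ gives the $\log\log n/\sqrt n$ term.

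This reduces matters to $\sqrt n\,\mathbb E\mathcal S(P^N_{t_0}(\mu_n-\mu))$. Its spectral coefficients $\sqrt n\langle\mu_n-\mu,\varphi_k\rangle e^{-t_0\lambda_k}$ are normalized sums of iid bounded mean-zero vectors. Only modes with $\lambda\lesssim L_n\log n$ matter; the tail is negligible by \eqref{eq:original-P27}, so the number of modes is about $d\asymp L_n\log n$. I would then apply Zhai's quantitative CLT in flux coordinates: the vector $Y=(\lambda_k^{-1/2}e^{-t_0\lambda_k}\sqrt n\langle\mu_n-\mu,\varphi_k\rangle)_k$ is a sum of iid vectors with $|{\rm summand}|\le\beta/\sqrt n$, where $\beta^2\lesssim\sum_k 4e^{-2t_0\lambda_k}\lesssim L_n$ by \eqref{eq:original-P8}. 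Zhai gives $W_2(Y,\text{Gaussian})\lesssim\sqrt d\,\beta\log n/\sqrt n$. The support function is $1$-Lipschitz in the flux $L^2$ norm by \eqref{eq:original-P18}, so the expected error is at most this $W_2$ bound, namely $\sqrt{L_n\log n\cdot L_n}\log n/\sqrt n\sim(\log n)^{(3-B)/2}\sqrt n\cdot(\ldots)$. The calibration $L_n=n/(\log n)^B$ is exactly what turns this into the $(\log n)^{(3-B)/2}$ term. One caveat: the Gaussian limit has covariance $I-\mathbf 1\mathbf 1^\top$-type corrections from centering, but these vanish on mean-zero modes.

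The Gaussian replacement yields $\mathbb E\mathcal S(\eta_N(t_0))$, which differs from $F_N(L_n)$ by $O(1)$ by the sharp/heat cutoff estimate of Section~\ref{sec:comparison}. Proposition~\ref{prop:Neumann} and \eqref{eq:original-P25} then give
\[
F_N(L_n)=\kappa_1\sqrt{\log L_n/(4\pi)}(1+o(1))=\kappa_1\sqrt{\log n/(4\pi)}(1+o(1)),
\]
so the normalized mean converges to $c_*=\kappa_1/(2\sqrt\pi)$. \textbf{The main obstacle is here:} bookkeeping the dimension in Zhai's bound against the heat cost so that every error is $o(\sqrt{\log n})$. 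That requires $B>2$ (indeed $B>3$ for the displayed rate to vanish; for $2<B\le3$ I would expect the paper's exponent bookkeeping to differ slightly, so I would recheck the dimension count first).

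\textbf{Almost-sure upgrade.} Let $S_n=nW_n=\sup_{u\in\mathcal K}\sum_{i\le n}(u(X_i)-\int u)$, a supremum of martingales, hence a submartingale. Its increments are bounded by $\sup|u|\le\sqrt2$ plus the mean, so they are at most $2\sqrt2$. Doob-decompose $S_n=M_n+A_n$ with $A_n$ nondecreasing and predictable, and with the martingale increments bounded by $4\sqrt2$. Maximal Azuma gives
\[
\mathbb P\Bigl(\max_{k\le n}|M_k|>x\Bigr)\le 2e^{-x^2/(64n)}.
\]
Alternatively, bounded differences (McDiarmid) on $W_n$ give $\mathbb P(|W_n-\mathbb EW_n|>\epsilon\sqrt{\log n/n})\le 2e^{-c\epsilon^2\log n}$, which is summable only along geometric subsequences $n_j=\lfloor(1+\theta)^j\rfloor$. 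Borel--Cantelli then gives convergence along $n_j$. For $n_j\le n<n_{j+1}$, I would interpolate using $S_n=M_n+A_n$. Monotonicity of $A$ sandwiches $A_n$ between $A_{n_j}$ and $A_{n_{j+1}}$. The maximal Azuma bound on $\max_{n\le n_{j+1}}|M_n|\le C\sqrt{n_{j+1}\log n_{j+1}}$ needs a more careful handling, since this is the same order as the signal. I would instead control the oscillation $\max_{n_j\le n\le n_{j+1}}|M_n-M_{n_j}|$, which by maximal Azuma is $\lesssim\sqrt{\theta n_j\log n_j}$, i.e.\ $O(\sqrt\theta)$ in normalized units. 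Letting $\theta\downarrow0$ along a countable sequence finishes the almost-sure limit along all integers.
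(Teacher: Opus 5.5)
\textbf{Gap in the Gaussian replacement.} Your dyadic heat telescoping from $1/n$ up to $t_0=1/L_n$ works. It is a discrete version of the paper's flux integral: each band costs $O(n^{-1/2})$, and there are $O(\log\log n)$ bands. The Zhai step, however, does not give the rate you claim.

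In flux coordinates the summand attached to $X_i$ is $(\lambda_k^{-1/2}e^{-t_0\lambda_k}\varphi_k(X_i))_k$. Hence $\beta^2\le 4\sum_k\lambda_k^{-1}e^{-2t_0\lambda_k}\le 4m_N(L_n)+C=O(\log n)$. Your bound $\beta^2\lesssim\sum_k e^{-2t_0\lambda_k}\lesssim L_n$ drops the weight $\lambda_k^{-1}$; it is the source norm, not the flux norm. With your numbers, $\sqrt d\,\beta\log n/\sqrt n\asymp\sqrt n(\log n)^{3/2-B}$, which diverges, so the asserted $(\log n)^{(3-B)/2}$ does not follow from your computation.

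Even with $\beta$ corrected, keeping all modes with $\lambda\lesssim L_n\log n$ gives $d\asymp L_n\log n$. The error is then $(\log n)^{2-B/2}$, which is $o(\sqrt{\log n})$ only when $B>3$. So neither \eqref{eq:intro-reduction} nor the mean limit for $2<B\le 3$ follows.

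The missing step is to cut sharply at $L_n$ \emph{before} Gaussian replacement. By \eqref{eq:original-5-1}--\eqref{eq:original-5-2}, $\mathbb E\|J_N(P^N_{1/L}Z_n-\Pi_LZ_n)\|_2^2=\sum_k a_L(\lambda_k)^2/\lambda_k\le C$, so this cut costs only $O(1)$. The relevant tail estimate is $\sum_{\lambda>L}\lambda^{-1}e^{-2\lambda/L}\le C$, not \eqref{eq:original-P27}. After the cut, $d=N_N(L_n)\le CL_n$ and $\beta=2\sqrt{m_N(L_n)}$. Zhai then gives $C\sqrt{L_nm_N(L_n)}\log n/\sqrt n\le C_B(\log n)^{(3-B)/2}$.

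Your suspicion that $B>3$ is needed comes from asking this unnormalized term to vanish. What is actually required is $(\log n)^{(3-B)/2}/\sqrt{\log n}=(\log n)^{1-B/2}\to0$, which is exactly $B>2$. Separately, the lower heat time must be $1/n$, as in the paper. A choice $t_{\min}\approx n^{-1}\log n$ costs $\sqrt{\pi}\,r_n$, which is of leading order, not negligible.

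\textbf{Almost-sure part.} This part works, but your worry that the martingale is of the same order as the signal is unfounded. With your increment bound, maximal Azuma at threshold $\varepsilon\sqrt{N\log N}$ has probability $2N^{-\varepsilon^2/64}$. This is summable along $N=2^j$ for every $\varepsilon>0$, so $M_n=o(\sqrt{n\log n})$ almost surely, and the paper then simply sandwiches $A_n$.

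Your block-oscillation route also closes. Monotonicity of $A$ gives $S_{n_j}-\mathrm{osc}_j\le S_n\le S_{n_{j+1}}+2\,\mathrm{osc}_j$, where $\mathrm{osc}_j=\max_{n_j\le n\le n_{j+1}}|M_n-M_{n_j}|=o(h_{n_j})$ almost surely by the same Azuma bound applied over the block. It is simply more work than needed.
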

We first prove the mean reduction, then independently prove the conditional interface from a positive finite mean limit to an all-integer almost-sure limit, and finally instantiate it.

There are two tasks, both strictly on $Q=[0,1]^2$ with Euclidean distance and $\mu=dx$. First reduce the normalized mean of empirical $W_1$ to the truncated Neumann Gaussian energy. Second, assuming a positive finite normalized mean limit, prove the same almost-sure limit for the actual distance at every integer sample size of the original iid sequence. Section~\ref{sec:comparison} established the required square--periodic Gaussian comparison; here we derive the empirical reduction before inserting that Gaussian conclusion.

\subsection{Objects, conventions, and foundational results}\label{orig:V:1}

We use compact-space KR and $d_{BL}=W_1$ with the same Lipschitz dual, empirical source, and normalization. The original probability space, metric, and sample sequence remain fixed.

Let $Q=[0,1]^2$ have Euclidean distance and let $\mu=dx$ be area measure of mass one. The sequence $X_1,X_2,\ldots$ is a fixed infinite iid sequence with law $\mu$. Define

\begin{equation}
\mu_n=\frac1n\sum_{i=1}^n\delta_{X_i},\qquad
W_n=W_1^Q(\mu_n,\mu),\qquad
r_n=\sqrt{\frac{\log n}{n}},\qquad
\Xi_n=\frac{W_n}{r_n},\quad n\ge2.
\tag{1.1}\label{eq:original-1-1}
\end{equation}

All logarithms are natural. The centered empirical source is

\begin{equation}
Z_n=\sqrt n(\mu_n-\mu)
=\frac1{\sqrt n}\sum_{i=1}^n(\delta_{X_i}-\mu).
\tag{1.2}\label{eq:original-1-2}
\end{equation}

The symbol $Z_n$ denotes a signed measure, not the martingale below or a scalar standardized fluctuation.

For a mean-zero signed measure, or a source with the Sobolev regularity specified below, define the Lipschitz dual functional

\begin{equation}
\mathcal S(f)=\sup_{u\in K}\langle f,u\rangle,\qquad
K=\left\{u\in C(Q):\operatorname{Lip}_Q(u)\le1,\ \int_Q u\,dx=0\right\}.
\tag{1.3}\label{eq:original-1-3}
\end{equation}

Mean normalization leaves pairings with zero-mass sources unchanged. The class $K$ is compact in the uniform norm because it is equicontinuous and

\begin{equation*}
|u(x)|=\left|\int_Q(u(x)-u(y))\,dy\right|\le\sqrt2.
\end{equation*}

Fix a countable dense subset of $K$. Every finite-measure pairing has the same supremum over this subset, so the random functionals used are measurable. For finite-dimensional sources, pairings are finite-dimensional linear functions and the supremum is a continuous finite-dimensional function.

We use compact-space KR, Rademacher, and Sobolev weak-gradient theory. Tonelli handles nonnegative series and integrals; dominated convergence handles controlled limits; conditional expectations use the tower property. The first Borel--Cantelli lemma says that if $\sum_j\Pr(E_j)<\infty$, then $\Pr(E_j\text{occurs infinitely often})=0$, without independence of the events. Conditional Hoeffding, bounded differences, and maximal Azuma are proved fully below; Zhai's quantitative CLT is stated precisely where used.

Since $\operatorname{diam}(Q)=\sqrt2<2$, subtracting the midrange from any unit Lipschitz function gives uniform norm at most $\sqrt2/2<1$. This leaves the pairing with a probability difference unchanged. Hence

\begin{equation}
d_{BL}(\nu,\eta)=\mathcal S(\nu-\eta)=W_1^Q(\nu,\eta).
\tag{1.4}\label{eq:original-1-4}
\end{equation}

\subsection{Neumann spectrum, flux, and counting estimates}\label{orig:V:2}

We compute the cosine eigenfunctions, flux orthogonality, dimension, and $H^{-1}$ dual bound, obtaining derivative traces by lattice counting. The flux sign is uniformly $-\operatorname{div}J_N=f$; changing the overall sign does not affect norm bounds. The heat-to-sharp error is proved separately below.

For $k=(k_1,k_2)\in\mathbb N_0^2\setminus\{0\}$ set

\begin{equation}
s(k)=\#\{j:k_j>0\},\qquad
\varphi_k(x)=2^{s(k)/2}\cos(\pi k_1x_1)\cos(\pi k_2x_2),\qquad
\lambda_k=\pi^2|k|^2.
\tag{2.1}\label{eq:original-2-1}
\end{equation}

The constant mode is $\varphi_0=1$. One-dimensional cosine orthogonality and product integration give zero means for nonzero $\varphi_k$ and

\begin{equation}
\int_Q\varphi_k\varphi_l\,dx=\delta_{kl},\qquad
|\varphi_k(x)|\le2.
\tag{2.2}\label{eq:original-2-2}
\end{equation}

They satisfy $-\Delta\varphi_k=\lambda_k\varphi_k$ and the Neumann condition on every side. Integration by parts gives

\begin{equation}
\int_Q\nabla\varphi_k\cdot\nabla\varphi_l\,dx
=\lambda_l\int_Q\varphi_k\varphi_l\,dx
=\lambda_k\delta_{kl}.
\tag{2.3}\label{eq:original-2-3}
\end{equation}

Fix a real cutoff $L$ and set

\begin{equation}
\Pi_L f=\sum_{0<\lambda_k\le L}\langle f,\varphi_k\rangle\varphi_k,
\qquad m_N(L)=\sum_{0<\lambda_k\le L}\frac1{\lambda_k},
\qquad \xi_N^L=\sum_{0<\lambda_k\le L}g_k\varphi_k,
\tag{2.4}\label{eq:original-2-4}
\end{equation}

Here $g_k$ are independent standard real Gaussians. The subscript $N$ means Neumann, not sample size. Define

\begin{equation}
F_N(L)=\mathbb E\mathcal S(\xi_N^L).
\tag{2.5}\label{eq:original-2-5}
\end{equation}

Use the positive-gradient convention:

\begin{equation}
J_N(f)=\nabla(-\Delta_N)^{-1}f
=\sum_{k\ne0}\frac{\langle f,\varphi_k\rangle}{\lambda_k}\nabla\varphi_k,
\qquad -\operatorname{div}J_N(f)=f.
\tag{2.6}\label{eq:original-2-6}
\end{equation}

By \eqref{eq:original-2-3},

\begin{equation}
\|J_N(f)\|_2^2=\sum_{k\ne0}\frac{|\langle f,\varphi_k\rangle|^2}{\lambda_k}
=:\|f\|_{H_N^{-1}}^2.
\tag{2.7}\label{eq:original-2-7}
\end{equation}

For a finite-spectral source the normal flux vanishes on the boundary, so

\begin{equation*}
\langle f,u\rangle=\int_Q J_N(f)\cdot\nabla u\,dx.
\end{equation*}

For the mean-zero $L^2$ sources used here, finite spectral approximation gives $L^2$ convergence of both source and flux. Section~\ref{sec:preliminaries}, equations \eqref{eq:original-P15} and \eqref{eq:original-P18}, proved passage of the pairings to the limit. A test $u\in K$ belongs to $H^1$; from $|\nabla u|\le1$ almost everywhere and $|Q|=1$ we obtain $\|\nabla u\|_2\le1$. Thus

\begin{equation}
\mathcal S(f)\le\|J_N(f)\|_2,
\qquad |\mathcal S(f)-\mathcal S(h)|\le\|J_N(f-h)\|_2.
\tag{2.8}\label{eq:original-2-8}
\end{equation}

The second inequality follows from subadditivity and symmetry of $\mathcal S$: $|\mathcal S(f)-\mathcal S(h)|\le\mathcal S(f-h)$. We do not pair an undefined arbitrary distribution with a Lipschitz function.

Let $N_N(a)$ count modes in $0<\lambda_k\le a$. It counts nonnegative-quadrant lattice points in the disk of radius $R=\sqrt a/\pi$. Only a fixed-width boundary annulus and the two axes differ from the area comparison. The annulus has area $O(R+1)$ and the axis count is $O(R+1)$, so

\begin{equation}
N_N(a)=\frac14\pi R^2+O(R+1)
=\frac a{4\pi}+O(\sqrt a+1),\qquad N_N(a)\le Ca.
\tag{2.9}\label{eq:original-2-9}
\end{equation}

The last bound holds for every $a>0$: below the first positive eigenvalue $\pi^2$ its left side is zero, and enlarging the constant covers the rest.

Take $a_0\in(0,\pi^2)$. Stieltjes integration by parts gives

\begin{equation*}
m_N(L)=\int_{(a_0,L]}\frac1a\,dN_N(a)
=\frac{N_N(L)}L+\int_{a_0}^L\frac{N_N(a)}{a^2}\,da.
\end{equation*}

Insert \eqref{eq:original-2-9}. The main integral is $(4\pi)^{-1}\log L$, and the error at infinity is controlled by $a^{-3/2}+a^{-2}$. Thus

\begin{equation}
m_N(L)=\frac{\log L}{4\pi}+O(1).
\tag{2.10}\label{eq:original-2-10}
\end{equation}

We also need, uniformly in $0<u\le1$,

\begin{equation}
\sum_{k\ne0}\lambda_k e^{-2u\lambda_k}\le Cu^{-2}.
\tag{2.11}\label{eq:original-2-11}
\end{equation}

Split the spectrum into bands $[j/u,(j+1)/u)$. Band $j$ contains at most $C(j+1)/u$ points, and each term is at most $((j+1)/u)e^{-2j}$. The total is therefore at most

\begin{equation*}
\frac C{u^2}\sum_{j\ge0}(j+1)^2e^{-2j}<\infty\cdot u^{-2}.
\end{equation*}

\subsection{Empirical spectral covariance: a termwise calculation}\label{orig:V:3}

Write $a_{n,k}=\langle Z_n,\varphi_k\rangle$. By $\mu\varphi_k=0$,

\begin{equation}
a_{n,k}=\frac1{\sqrt n}\sum_{i=1}^n\varphi_k(X_i).
\tag{3.1}\label{eq:original-3-1}
\end{equation}

Its expectation is

\begin{equation*}
\mathbb E a_{n,k}=\frac1{\sqrt n}\sum_{i=1}^n\int_Q\varphi_k\,d\mu=0.
\end{equation*}

The covariance of two coefficients is

\begin{equation*}
\mathbb E[a_{n,k}a_{n,l}]
=\frac1n\sum_{i,j=1}^n\mathbb E[\varphi_k(X_i)\varphi_l(X_j)].
\end{equation*}

For $i\ne j$ independence gives $(\mu\varphi_k)(\mu\varphi_l)=0$; for $i=j$ the term is $\mu(\varphi_k\varphi_l)=\delta_{kl}$. There are $n$ diagonal terms, so

\begin{equation}
\mathbb E[a_{n,k}a_{n,l}]=\delta_{kl},\qquad
\mathbb E|a_{n,k}|^2=1.
\tag{3.2}\label{eq:original-3-2}
\end{equation}

Empirical coefficients at different frequencies are generally dependent. Multiplier second-moment calculations need only \eqref{eq:original-3-2}, not independence. Correspondingly, for $b_{n,k}=\langle\mu_n-\mu,\varphi_k\rangle=a_{n,k}/\sqrt n$,

\begin{equation}
\mathbb E|b_{n,k}|^2=\frac1n.
\tag{3.3}\label{eq:original-3-3}
\end{equation}

\subsection{Two-scale heat smoothing: fine displacement and the integral remainder}\label{orig:V:4}

Fix $B>2$ and set

\begin{equation}
t_n=\frac{(\log n)^B}{n},\qquad L_n=t_n^{-1}=\frac n{(\log n)^B}.
\tag{4.1}\label{eq:original-4-1}
\end{equation}

The Neumann heat semigroup is $P_t^N=e^{t\Delta_N}$ with multiplier $e^{-t\lambda_k}$. Thus the integer-index cutoff is $|k|\le\sqrt L/\pi$, not $|k|\le\sqrt L$.

\subsubsection{Heat contraction and particle displacement}\label{orig:V:4.1}

Let $\rho:\mathbb R\to[0,1]$ be the even, two-periodic triangular folding map. It is globally $1$-Lipschitz and the identity on $[0,1]$. If $B_t$ has covariance $2tI_2$, then

\begin{equation}
Y_t(x)=(\rho(x_1+B_t^1),\rho(x_2+B_t^2))
\tag{4.2}\label{eq:original-4-2}
\end{equation}

has the single-time reflected heat law on $P_t^N$, as checked by Gaussian images or cosine expansion. Under the same Gaussian increment,

\begin{equation*}
|Y_t(x)-Y_t(y)|\le|x-y|.
\end{equation*}

Thus a unit Lipschitz $u$ satisfies $|P_t^Nu(x)-P_t^Nu(y)|\le|x-y|$. The symmetric heat kernel has row mass one, so $P_t^N$ is self-adjoint and $P_t^N\mu=\mu$. Hence

\begin{equation}
\mathcal S(P_t^Nf)
=\sup_{u\in K}\langle f,P_t^Nu\rangle\le\mathcal S(f).
\tag{4.3}\label{eq:original-4-3}
\end{equation}

If mean normalization of $P_t^Nu$ needs adjustment, subtracting a constant preserves the pairing; in fact its mean is already zero.

Since $\rho(x_j)=x_j$,

\begin{equation*}
|Y_t(x)-x|\le|B_t|.
\end{equation*}

The two-dimensional Gaussian radius has density $(r/(2t))e^{-r^2/(4t)}$. Therefore

\begin{equation}
\mathbb E|B_t|=\int_0^\infty\frac{r^2}{2t}e^{-r^2/(4t)}\,dr
=4\sqrt t\int_0^\infty v^2e^{-v^2}\,dv
=\sqrt{\pi t}.
\tag{4.4}\label{eq:original-4-4}
\end{equation}

We used $r=2\sqrt t\,v$ and $\int_0^\infty v^2e^{-v^2}dv=\sqrt\pi/4$, the latter following from integration by parts and Gaussian integration. Conditional on the empirical sample, let $x$ have law $\mu_n$ and use independent heat displacement. Then $(x,Y_t(x))$ couples $(\mu_n,P_t^N\mu_n)$, giving

\begin{equation}
W_1^Q(\mu_n,P_t^N\mu_n)\le\sqrt{\pi t},\qquad 0<t\le1.
\tag{4.5}\label{eq:original-4-5}
\end{equation}

\subsubsection{Flux integration over the intermediate time interval}\label{orig:V:4.2}

Fix $0<s<t\le1$ and set

\begin{equation}
g_u=(-\Delta_N)P_u^N(\mu_n-\mu)
=\sum_{k\ne0}\lambda_ke^{-u\lambda_k}b_{n,k}\varphi_k.
\tag{4.6}\label{eq:original-4-6}
\end{equation}

By \eqref{eq:original-2-7}, \eqref{eq:original-3-3}, and Tonelli,

\begin{equation}
\mathbb E\|g_u\|_{H_N^{-1}}^2
=\mathbb E\sum_{k\ne0}\lambda_ke^{-2u\lambda_k}|b_{n,k}|^2
=\frac1n\sum_{k\ne0}\lambda_ke^{-2u\lambda_k}
\le\frac C{nu^2}.
\tag{4.7}\label{eq:original-4-7}
\end{equation}

Interchanging nonnegative terms needs no prior integrability; the finite bound on the right then proves integrability.

First integrate at finite cutoff:

\begin{equation*}
\int_s^t\lambda_ke^{-u\lambda_k}\,du=e^{-s\lambda_k}-e^{-t\lambda_k}.
\end{equation*}

Thus $\Pi_M(P_s^N\mu_n-P_t^N\mu_n)=\int_s^t\Pi_Mg_u\,du$ at finite spectrum. To let $M\to\infty$, note that for fixed sample size $|b_{n,k}|\le2$. On the closed interval $[s,t]$ the flux tail satisfies

\begin{equation*}
\sup_{u\in[s,t]}\sum_{\lambda_k>M}\lambda_ke^{-2u\lambda_k}|b_{n,k}|^2
\le4\sum_{\lambda_k>M}\lambda_ke^{-2s\lambda_k}\longrightarrow0.
\end{equation*}

We must also control the source's $L^2$ tail to stay within the integration-by-parts limiting scope of Section~\ref{sec:preliminaries}. Spectral orthogonality and the same coefficient bound give

\begin{equation*}
\sup_{u\in[s,t]}\|g_u-\Pi_Mg_u\|_2^2
\le4\sum_{\lambda_k>M}\lambda_k^2e^{-2s\lambda_k}
\longrightarrow0.
\end{equation*}

For fixed $s>0$ both tails are summable: the first is integer order one in Section~\ref{sec:preliminaries}, equation \eqref{eq:original-P28}, and the second is integer order two. Source and flux thus converge uniformly in their respective $L^2$ spaces, allowing passage of each test pairing. In particular, finite projections of $g_u$ converge uniformly in $H_N^{-1}$, as do their integrals in this norm. Hence

\begin{equation}
P_s^N\mu_n-P_t^N\mu_n=\int_s^t g_u\,du\quad\text{in }H_N^{-1}.
\tag{4.8}\label{eq:original-4-8}
\end{equation}

For every $u\in K$ the pairing is linear and continuous; alternatively use the triangle inequality for the $L^2$ flux integral directly. By \eqref{eq:original-2-8},

\begin{equation*}
W_1^Q(P_s^N\mu_n,P_t^N\mu_n)
\le\int_s^t\|g_u\|_{H_N^{-1}}\,du.
\end{equation*}

Tonelli, Cauchy--Schwarz, and \eqref{eq:original-4-7} give

\begin{equation}
\mathbb E W_1^Q(P_s^N\mu_n,P_t^N\mu_n)
\le\int_s^t\left(\mathbb E\|g_u\|_{H_N^{-1}}^2\right)^{1/2}\,du
\le\frac C{\sqrt n}\int_s^t\frac{du}{u}
=\frac C{\sqrt n}\log\frac ts.
\tag{4.9}\label{eq:original-4-9}
\end{equation}

\subsubsection{Choosing scales and comparison with empirical transport}\label{orig:V:4.3}

The triangle inequality, \eqref{eq:original-4-5}, and \eqref{eq:original-4-9} give

\begin{equation}
\mathbb E W_1^Q(\mu_n,P_t^N\mu_n)
\le\sqrt{\pi s}+\frac C{\sqrt n}\log\frac ts.
\tag{4.10}\label{eq:original-4-10}
\end{equation}

For sufficiently large $n$ choose $s=1/n$ and $t=t_n$. Then $t_n>s$ and $t_n\le1$, and

\begin{equation*}
\sqrt{\pi s}=\frac{\sqrt\pi}{\sqrt n},\qquad
\log(t_n/s)=\log((\log n)^B)=B\log\log n.
\end{equation*}

Thus

\begin{equation}
\mathbb E W_1^Q(\mu_n,P_{t_n}^N\mu_n)
\le\frac{C_B(1+\log\log n)}{\sqrt n}=o(r_n),
\tag{4.11}\label{eq:original-4-11}
\end{equation}

because $(1+\log\log n)/\sqrt{\log n}\to0$. Applying \eqref{eq:original-4-5} directly to $t=t_n$ would give ratio $\sqrt\pi(\log n)^{(B-1)/2}$, which does not vanish. This explains the need for two scales.

Samplewise, \eqref{eq:original-4-3} and subadditivity give

\begin{equation*}
0\le W_n-\mathcal S(P_{t_n}^N(\mu_n-\mu))
\le W_1^Q(\mu_n,P_{t_n}^N\mu_n).
\end{equation*}

Using $\mu_n-\mu=n^{-1/2}Z_n$ and positive homogeneity, then taking expectation, yields

\begin{equation}
\mathbb E W_n=\frac1{\sqrt n}\mathbb E\mathcal S(P_{t_n}^NZ_n)+\varepsilon_n^{(s)},
\quad 0\le\varepsilon_n^{(s)}\le\frac{C_B(1+\log\log n)}{\sqrt n}.
\tag{4.12}\label{eq:original-4-12}
\end{equation}

\subsection{A bounded error between heat smoothing and spectral truncation}\label{orig:V:5}

The precise conclusion needed is the mean-energy error $O(1)=o(\sqrt{m_N(L_n)})$. A bounded absolute error suffices to vanish after leading-order normalization; the absolute error itself need not tend to zero.

For sufficiently large real $L$ set $t=1/L$ and write

\begin{equation*}
H_{n,L}=P_{1/L}^NZ_n-\Pi_LZ_n,
\qquad a_L(\lambda)=e^{-\lambda/L}-\mathbf1_{\lambda\le L}.
\end{equation*}

Gradient orthogonality and \eqref{eq:original-3-2} give

\begin{equation}
\mathbb E\|J_N(H_{n,L})\|_2^2
=\sum_{k\ne0}\frac{a_L(\lambda_k)^2}{\lambda_k}.
\tag{5.1}\label{eq:original-5-1}
\end{equation}

First prove the identity at finite spectrum, then apply monotone convergence/Tonelli to nonnegative series. For fixed $L$ the heat tails are exponentially summable.

Split the right side into three parts. For $\lambda\le L/2$ we have $1-e^{-\lambda/L}\le\lambda/L$, so

\begin{equation*}
\sum_{\lambda_k\le L/2}\frac{a_L(\lambda_k)^2}{\lambda_k}
\le L^{-2}\sum_{\lambda_k\le L/2}\lambda_k
\le L^{-2}\frac L2 N_N(L/2)\le C.
\end{equation*}

For $L/2<\lambda\le L$ we have $|a_L|\le1$, so

\begin{equation*}
\sum_{L/2<\lambda_k\le L}\frac{a_L(\lambda_k)^2}{\lambda_k}
\le\frac2L N_N(L)\le C.
\end{equation*}

For $\lambda>L$ the multiplier is $e^{-\lambda/L}$. Splitting into bands $2^jL<\lambda\le2^{j+1}L$ gives

\begin{equation*}
\sum_{\lambda_k>L}\frac{e^{-2\lambda_k/L}}{\lambda_k}
\le\sum_{j\ge0}\frac{e^{-2^{j+1}}}{2^jL}N_N(2^{j+1}L)
\le2C\sum_{j\ge0}e^{-2^{j+1}}\le C'.
\end{equation*}

Thus \eqref{eq:original-5-1} has a uniform constant bound for all $n,L$. Apply \eqref{eq:original-2-8} and Cauchy--Schwarz:

\begin{equation}
\left|\mathbb E\mathcal S(P_{1/L}^NZ_n)-\mathbb E\mathcal S(\Pi_LZ_n)\right|
\le\mathbb E\|J_N(H_{n,L})\|_2
\le\left(\mathbb E\|J_N(H_{n,L})\|_2^2\right)^{1/2}\le C.
\tag{5.2}\label{eq:original-5-2}
\end{equation}

The conclusion is $O(1)$, not $o(1)$. At $L=L_n$, the required statement is $O(1)/\sqrt{\log n}\to0$, which has been proved.

\subsection{Zhai's theorem and Gaussian replacement in growing dimension}\label{orig:V:6}

\subsubsection{Precise statement of the external theorem}\label{orig:V:6.1}

We use the quantitative central limit theorem of
\cite[Theorem~1.1]{Zhai}.

\textbf{Theorem.} Let $Y_1,\ldots,Y_n$ be independent random vectors in $\mathbb R^d$, each centered with common covariance matrix $\Sigma$, and satisfying $|Y_i|\le\beta$ almost surely. Let $S=n^{-1/2}\sum_iY_i$ and let $G$ be a centered Gaussian with covariance $\Sigma$. Then

\begin{equation}
W_2(\mathcal L(S),\mathcal L(G))
\le C\frac{\sqrt d\,\beta\log n}{\sqrt n},\qquad n\ge2.
\tag{6.1}\label{eq:original-6-1}
\end{equation}

Here $C$ is universal and independent of $n,d,\beta,\Sigma$, and $W_2$ is Euclidean quadratic Wasserstein distance. The theorem's original bound is $5\sqrt d\,\beta(1+\log n)/\sqrt n$; for $n\ge2$ absorb $1+\log n$ into a universal constant times $\log n$. Identity covariance is unnecessary, and no $\Sigma^{-1}$, condition number, or uniform smallest-eigenvalue bound appears. The same bound applies separately for each $n$, so row parameters $d,\beta,\Sigma$ may vary.

\subsubsection{The finite-dimensional flux Hilbert space and the hypotheses}\label{orig:V:6.2}

Let

\begin{equation*}
\mathcal H_L=\operatorname{span}\{\nabla\varphi_k:0<\lambda_k\le L\}
\subset L^2(Q;\mathbb R^2),\qquad
e_k=\frac{\nabla\varphi_k}{\sqrt{\lambda_k}}.
\end{equation*}

By \eqref{eq:original-2-3}, the family $\{e_k\}$ is orthonormal, so the real dimension is

\begin{equation}
d_L=N_N(L)\le CL.
\tag{6.2}\label{eq:original-6-2}
\end{equation}

For $x\in Q$ define the truncated atomic flux

\begin{equation}
V_x=J_N(\Pi_L(\delta_x-\mu))
=\sum_{0<\lambda_k\le L}\frac{\varphi_k(x)}{\sqrt{\lambda_k}}e_k.
\tag{6.3}\label{eq:original-6-3}
\end{equation}

Since each $\mu\varphi_k=0$, we have $\mathbb EV_X=0$. The covariance of its basis coordinates is

\begin{equation*}
\mathbb E\left[\frac{\varphi_k(X)}{\sqrt{\lambda_k}}
\frac{\varphi_l(X)}{\sqrt{\lambda_l}}\right]
=\frac{\delta_{kl}}{\sqrt{\lambda_k\lambda_l}},
\end{equation*}

Therefore

\begin{equation}
\Sigma_L=\operatorname{diag}(\lambda_k^{-1}:0<\lambda_k\le L).
\tag{6.4}\label{eq:original-6-4}
\end{equation}

The flux norm at each $x$ satisfies

\begin{equation}
\|V_x\|_2^2=\sum_{0<\lambda_k\le L}\frac{\varphi_k(x)^2}{\lambda_k}
\le4m_N(L),\qquad \beta_L=2\sqrt{m_N(L)}.
\tag{6.5}\label{eq:original-6-5}
\end{equation}

Moreover,

\begin{equation}
J_N(\Pi_LZ_n)=\frac1{\sqrt n}\sum_{i=1}^nV_{X_i},\qquad
J_N(\xi_N^L)=\sum_{0<\lambda_k\le L}\frac{g_k}{\sqrt{\lambda_k}}e_k.
\tag{6.6}\label{eq:original-6-6}
\end{equation}

The second is the centered Gaussian in $\mathcal H_L$ with covariance $\Sigma_L$. The first has independent summands because each depends only on its corresponding $X_i$. Identify $\mathcal H_L$ isometrically with $\mathbb R^{d_L}$ through basis coordinates. All hypotheses of \eqref{eq:original-6-1} have been checked, giving

\begin{equation}
W_2\bigl(\mathcal L(J_N(\Pi_LZ_n)),\mathcal L(J_N(\xi_N^L))\bigr)
\le C\sqrt{L m_N(L)}\frac{\log n}{\sqrt n}.
\tag{6.7}\label{eq:original-6-7}
\end{equation}

\subsubsection{From a flux coupling estimate to a mean-energy comparison}\label{orig:V:6.3}

The linear map from truncated sources to fluxes is bijective onto $\mathcal H_L$: each coordinate $f_k$ maps to $f_k/\sqrt{\lambda_k}$ and every $\lambda_k>0$. Define

\begin{equation*}
\Phi_L(J_N(f))=\mathcal S(f).
\end{equation*}

By \eqref{eq:original-2-8} the functional $\Phi_L$ is $1$-Lipschitz on $\mathcal H_L$. For any coupling $(U,V)$ of the two flux laws,

\begin{equation*}
|\mathbb E\Phi_L(U)-\mathbb E\Phi_L(V)|
\le\mathbb E|\Phi_L(U)-\Phi_L(V)|
\le\mathbb E\|U-V\|_2
\le(\mathbb E\|U-V\|_2^2)^{1/2}.
\end{equation*}

Taking the infimum over couplings gives the following, without requiring a common optimal coupling for every $n$:

\begin{equation}
|\mathbb E\mathcal S(\Pi_LZ_n)-F_N(L)|
\le C\sqrt{L m_N(L)}\frac{\log n}{\sqrt n}.
\tag{6.8}\label{eq:original-6-8}
\end{equation}

\subsubsection{\texorpdfstring{The error when dimension grows with $n$}{The error in growing dimension}}\label{orig:V:6.4}

Take $L=L_n=n/(\log n)^B$. By \eqref{eq:original-2-10} we have $m_N(L_n)=O(\log n)$, so

\begin{equation}
\sqrt{L_n m_N(L_n)}\frac{\log n}{\sqrt n}
\le C_B\frac{\sqrt n\,(\log n)^{(1-B)/2}\log n}{\sqrt n}
=C_B(\log n)^{(3-B)/2}.
\tag{6.9}\label{eq:original-6-9}
\end{equation}

Dividing by the required scale $\sqrt{\log n}$ gives

\begin{equation}
C_B(\log n)^{1-B/2}\longrightarrow0\quad(B>2).
\tag{6.10}\label{eq:original-6-10}
\end{equation}

For $2<B\le3$ the unnormalized bound need not vanish; this does not prevent \eqref{eq:original-6-10}. For each $n$ apply the same quantitative theorem to one row of finite-dimensional vectors, which is the valid triangular-array application. Rows need not be independent and couplings need not be compatible. This is only a mean comparison, not a deduction of empirical almost-sure convergence directly from the CLT.

\subsection{Combining errors and checking the constant}\label{orig:V:7}

Combining \eqref{eq:original-4-12}, \eqref{eq:original-5-2}, and \eqref{eq:original-6-8} gives

\begin{equation}
\left|\mathbb EW_n-\frac1{\sqrt n}F_N(L_n)\right|
\le\frac{C_B}{\sqrt n}\left[1+\log\log n+(\log n)^{(3-B)/2}\right].
\tag{7.1}\label{eq:original-7-1}
\end{equation}

Divide by $r_n=\sqrt{\log n}/\sqrt n$. The three errors are

\begin{equation*}
\frac1{\sqrt{\log n}}\to0,\qquad
\frac{\log\log n}{\sqrt{\log n}}\to0,\qquad
(\log n)^{1-B/2}\to0.
\end{equation*}

Hence

\begin{equation}
\mathbb E\Xi_n=\frac{F_N(L_n)}{\sqrt{\log n}}+o(1).
\tag{7.2}\label{eq:original-7-2}
\end{equation}

Also,

\begin{equation*}
\log L_n=\log n-B\log\log n,
\quad m_N(L_n)=\frac{\log n-B\log\log n}{4\pi}+O(1),
\end{equation*}

therefore

\begin{equation}
\frac{m_N(L_n)}{\log n}
=\frac1{4\pi}-\frac B{4\pi}\frac{\log\log n}{\log n}
+O((\log n)^{-1})\longrightarrow\frac1{4\pi}.
\tag{7.3}\label{eq:original-7-3}
\end{equation}

Furthermore, \eqref{eq:original-2-8}, Gaussian second moments, and Jensen give

\begin{equation}
0\le F_N(L)\le\mathbb E\|J_N(\xi_N^L)\|_2
\le\left(\mathbb E\|J_N(\xi_N^L)\|_2^2\right)^{1/2}
=\sqrt{m_N(L)}.
\tag{7.4}\label{eq:original-7-4}
\end{equation}

Thus the normalization-factor error times $F_N/\sqrt{m_N}\le1$ still vanishes. By \eqref{eq:original-7-2}--\eqref{eq:original-7-4},

\begin{equation}
\boxed{\mathbb E\Xi_n
=\frac1{2\sqrt\pi}\frac{F_N(L_n)}{\sqrt{m_N(L_n)}}+o(1).}
\tag{7.5}\label{eq:original-7-5}
\end{equation}

This is the reduction of empirical means to Neumann width. Section~\ref{sec:comparison} established

\begin{equation}
\frac{F_N(L)}{\sqrt{m_N(L)}}\longrightarrow\kappa_1\in(0,\infty)
\quad\text{as all real }L\to\infty,
\tag{7.6}\label{eq:original-7-6}
\end{equation}

Inserting this limit into \eqref{eq:original-7-5} gives

\begin{equation}
\mathbb E\Xi_n\longrightarrow c_*=\frac{\kappa_1}{2\sqrt\pi}.
\tag{7.7}\label{eq:original-7-7}
\end{equation}

The dual-constant and stopped-martingale arguments in Section~\ref{sec:periodic} give $0<\kappa_1\le\sqrt\pi/2$; Section~\ref{sec:comparison} transfers the periodic limit to the square, so $0<c_*\le1/4$. The single-sample $Z_n$ has identity spectral covariance. An unnormalized difference of two independent empirical samples would have twice the covariance and cannot use the same constant normalization.

\subsection{Conditional Hoeffding moments via variance under exponential tilting}\label{orig:V:8}

Let a real $Y$ have mean zero and take values in a bounded interval of length $c$. Define $K(s)=\log\mathbb Ee^{sY}$. Boundedness permits differentiation under the integral:

\begin{equation*}
K'(s)=\frac{\mathbb E[Ye^{sY}]}{\mathbb Ee^{sY}},\qquad
K''(s)=\frac{\mathbb E[Y^2e^{sY}]}{\mathbb Ee^{sY}}-(K'(s))^2.
\end{equation*}

The quantity $K''(s)$ is variance under the exponentially tilted law. If $m$ is the interval midpoint, then under any supported law

\begin{equation*}
\operatorname{Var}(Y)=\inf_a\mathbb E(Y-a)^2
\le\mathbb E(Y-m)^2\le c^2/4.
\end{equation*}

Thus $K''(s)\le c^2/4$ for every real $s$. Since $K(0)=0$ and $K'(0)=\mathbb EY=0$, the integral Taylor formula gives

\begin{equation*}
K(s)=s^2\int_0^1(1-v)K''(vs)\,dv\le s^2c^2/8.
\end{equation*}

Consequently

\begin{equation}
\mathbb Ee^{sY}\le e^{s^2c^2/8}.
\tag{8.1}\label{eq:original-8-1}
\end{equation}

If a variable is centered conditional on a sigma-algebra and its conditional support lies in an interval of length at most $c$, the same calculation under each conditional law gives the conditional form of \eqref{eq:original-8-1}. The interval may shift with the conditioning value; only its length enters the bound.

\subsection{Bounded differences: two-sided tails from the Doob martingale}\label{orig:V:9}

Let $F=F(Y_1,\ldots,Y_n)$ be bounded and measurable with independent inputs. Replacing coordinate $i$ changes $F$ by at most $c_i$. Our empirical distance is bounded and satisfies these conditions. Define

\begin{equation*}
T_i=\mathbb E[F\mid Y_1,\ldots,Y_i],\quad T_0=\mathbb EF,
\qquad D_i=T_i-T_{i-1}.
\end{equation*}

Fix the first $i-1$ inputs. As a function of $Y_i$, the quantity $T_i$ integrates out the remaining independent coordinates. For each fixed remaining tuple, replacing $Y_i$ changes it by at most $c_i$, and integration preserves this bound. Hence $D_i$ has conditional range length at most $c_i$ and conditional mean zero.

By \eqref{eq:original-8-1},

\begin{equation*}
\mathbb E[e^{sD_i}\mid Y_1,\ldots,Y_{i-1}]\le e^{s^2c_i^2/8}.
\end{equation*}

Repeated application of the tower property gives

\begin{equation*}
\mathbb Ee^{s(F-\mathbb EF)}
=\mathbb Ee^{s\sum_iD_i}\le\exp\left(\frac{s^2}8\sum_i c_i^2\right).
\end{equation*}

Write $A=\sum_i c_i^2>0$. For $s>0$, Markov gives upper-tail bound $\le e^{-st+s^2A/8}$. Its exponent is minimized at $s=4t/A$, with value $-2t^2/A$. Repeat for $-F$ to obtain

\begin{equation}
\Pr(|F-\mathbb EF|\ge t)\le2e^{-2t^2/A}.
\tag{9.1}\label{eq:original-9-1}
\end{equation}

If $A=0$, no input affects $F$ and the variable is deterministic; handle this case separately.

\subsection{Proof of maximal Azuma}\label{orig:V:10}

Let the real martingale $M_0=0$ have increments $D_k=M_k-M_{k-1}$ with $|D_k|\le a$ almost surely. Their conditional support lies in $[-a,a]$ with range length $2a$. By \eqref{eq:original-8-1},

\begin{equation*}
\mathbb E[e^{sD_k}\mid\mathcal F_{k-1}]\le e^{s^2a^2/2}.
\end{equation*}

Thus

\begin{equation*}
L_k=\exp(sM_k-s^2a^2k/2)
\end{equation*}

is a nonnegative supermartingale with $L_0=1$. Fix terminal time $N$ and let $\tau$ be the first time $M_k\ge t$, or $N$ if none occurs. This is bounded stopping. Iterating the conditional supermartingale inequality, or bounded optional stopping, gives $\mathbb EL_\tau\le1$.

On the crossing event $M_\tau\ge t$ and $\tau\le N$, so

\begin{equation*}
L_\tau\ge e^{st-s^2a^2N/2}.
\end{equation*}

The upper crossing probability is at most $e^{-st+s^2a^2N/2}$. Choose $s=t/(Na^2)$ and repeat the argument for $-M$ to obtain

\begin{equation}
\Pr\left(\max_{k\le N}|M_k|\ge t\right)
\le2\exp\left(-\frac{t^2}{2Na^2}\right).
\tag{10.1}\label{eq:original-10-1}
\end{equation}

If $a=0$ the martingale is identically zero. Increment independence was never required; zero conditional mean and conditional boundedness suffice.

\subsection{The empirical-distance submartingale and Doob compensator}\label{orig:V:11}

This subsection and the next three consider only the $Q=[0,1]^2$, Euclidean metric, $\mu=dx$, and same infinite iid sequence fixed in Section~\ref{orig:V:1}. Fix $D=\operatorname{diam}(Q)=\sqrt2$, keeping $W_n=W_1^Q(\mu_n,\mu)$, $r_n=\sqrt{\log n/n}$, and $\Xi_n=W_n/r_n$. Assume a known positive finite mean limit

\begin{equation}
\mathbb E\Xi_n\longrightarrow C\in(0,\infty).
\tag{11.1}\label{eq:original-11-1}
\end{equation}

Set $h_n=\sqrt{n\log n}$. We prove almost-sure convergence conditional on this positive finite mean limit. Its existence comes from Section~\ref{sec:comparison} and subsection~\ref{orig:V:7} here and cannot be inferred backwards from the conditional conclusion.

For zero-mass signed measures on $Q$, the KR norm is the supremum of pairings over the compact class $K$ of Section~\ref{orig:V:1}. Its fixed countable dense subset ensures measurability. Set

\begin{equation}
S_n=nW_n=\left\|\sum_{i=1}^n(\delta_{X_i}-\mu)\right\|_{\rm KR},\quad S_0=0,
\qquad d_n=\mathbb ES_n,
\quad\mathcal F_n=\sigma(X_1,\ldots,X_n).
\tag{11.2}\label{eq:original-11-2}
\end{equation}

For each fixed test $u$, the new atomic pairing $u(X_n)-\mu u$ has zero conditional mean. Conditional expectation of the supremum is at least each conditional expected pairing. Taking the supremum over countably many tests gives the submartingale inequality

\begin{equation}
\mathbb E[S_n\mid\mathcal F_{n-1}]\ge S_{n-1}.
\tag{11.3}\label{eq:original-11-3}
\end{equation}

The unique coupling of $\delta_x$ and $\mu$ is $\delta_x\otimes\mu$, so

\begin{equation*}
\|\delta_x-\mu\|_{\rm KR}=W_1^Q(\delta_x,\mu)=\int_Q|x-y|\,dy\le D=\sqrt2.
\end{equation*}

The reverse norm triangle inequality gives

\begin{equation}
|S_n-S_{n-1}|\le\|\delta_{X_n}-\mu\|_{\rm KR}\le D.
\tag{11.4}\label{eq:original-11-4}
\end{equation}

Define

\begin{equation}
B_n=\mathbb E[S_n-S_{n-1}\mid\mathcal F_{n-1}],\qquad
A_n=\sum_{i=1}^nB_i,\qquad M_n=S_n-A_n.
\tag{11.5}\label{eq:original-11-5}
\end{equation}

Equations \eqref{eq:original-11-3}--\eqref{eq:original-11-4} give $0\le B_n\le D$, so $A_n$ is pathwise nondecreasing. The tower property yields

\begin{equation*}
\mathbb EA_n=\sum_i\mathbb E(S_i-S_{i-1})=\mathbb ES_n=d_n.
\end{equation*}

Also,

\begin{equation*}
M_n-M_{n-1}=S_n-S_{n-1}-B_n
\end{equation*}

The increment is conditionally centered and bounded in absolute value by $2D=2\sqrt2$. Thus $M_n$ is a martingale with bounded increments. The submartingale property of $S_n$ is not samplewise monotonicity; only $A_n$ is sandwiched below.

\subsection{The martingale remainder for all sample sizes}\label{orig:V:12}

In \eqref{eq:original-10-1} take $a=2D=2\sqrt2$, $N=2^{j+1}$, and $t=\varepsilon\sqrt{2^jj}$ to obtain

\begin{equation}
\Pr\left(\max_{m\le2^{j+1}}|M_m|\ge\varepsilon\sqrt{2^jj}\right)
\le2\exp\left(-\frac{\varepsilon^2 2^jj}{2\cdot2^{j+1}\cdot4D^2}\right)
=2e^{-\varepsilon^2j/32}.
\tag{12.1}\label{eq:original-12-1}
\end{equation}

For each fixed $\varepsilon>0$ this is a summable geometric series in $j$. First Borel--Cantelli implies only finitely many occurrences, without event independence. Take $\varepsilon=1/m$ and intersect the countably many full-measure events. On the same event, the maximum of \eqref{eq:original-12-1} divided by $\sqrt{2^jj}$ tends to zero.

If $2^j\le n<2^{j+1}$, then

\begin{equation*}
h_n=\sqrt{n\log n}\ge\sqrt{2^j j\log2}.
\end{equation*}

Consequently

\begin{equation}
\frac{|M_n|}{h_n}
\le\frac1{\sqrt{\log2}}\frac{\max_{m\le2^{j+1}}|M_m|}{\sqrt{2^jj}}
\longrightarrow0.
\tag{12.2}\label{eq:original-12-2}
\end{equation}

This directly covers every integer $n$, with neither a martingale LIL nor a conditional-variance growth assumption.

\subsection{Concentration on a fixed geometric subsequence and countable events}\label{orig:V:13}

Replacing $X_i\in Q$ by $X_i'\in Q$ couples the empirical laws by moving only mass $1/n$ at cost at most $D/n=\sqrt2/n$. The reverse triangle inequality for $W_1$ shows that $W_n$ changes by at most $\sqrt2/n$. In \eqref{eq:original-9-1} take each $c_i=\sqrt2/n$; then $\sum_i c_i^2=n(2/n^2)=2/n$, giving

\begin{equation*}
\Pr(|W_n-\mathbb EW_n|\ge t)
\le2\exp(-nt^2),\qquad D^2=2.
\end{equation*}

Set $t=\varepsilon r_n$. Since $nr_n^2=\log n$,

\begin{equation}
\Pr(|\Xi_n-\mathbb E\Xi_n|\ge\varepsilon)
\le2\exp(-\varepsilon^2\log n).
\tag{13.1}\label{eq:original-13-1}
\end{equation}

This probability need not be summable over all $n$, so it does not directly prove full-sequence almost-sure vanishing of the deviation.

Fix $b>1$ and set $n_k=\lfloor b^k\rfloor$. Eventually $n_k\ge b^k/2$, hence

\begin{equation*}
\Pr(|\Xi_{n_k}-\mathbb E\Xi_{n_k}|\ge\varepsilon)
\le2\exp[-\varepsilon^2(k\log b-\log2)].
\end{equation*}

For $k$ this is a summable geometric series. Apply Borel--Cantelli and intersect over $\varepsilon=1/m$ to obtain a full-measure event $\Omega_b$ on which

\begin{equation}
\frac{S_{n_k}-d_{n_k}}{h_{n_k}}
=\Xi_{n_k}-\mathbb E\Xi_{n_k}\longrightarrow0.
\tag{13.2}\label{eq:original-13-2}
\end{equation}

Only finitely many initial indices can repeat, which has no effect. Combining \eqref{eq:original-12-2}, \eqref{eq:original-11-1}, and $d_n/h_n=\mathbb E\Xi_n$ gives

\begin{equation}
\frac{A_{n_k}}{h_{n_k}}
=\frac{S_{n_k}-d_{n_k}}{h_{n_k}}+\frac{d_{n_k}}{h_{n_k}}-\frac{M_{n_k}}{h_{n_k}}
\longrightarrow C.
\tag{13.3}\label{eq:original-13-3}
\end{equation}

\subsection{\texorpdfstring{From geometric subsequences to all $n$: sandwiching the compensator}{From geometric subsequences to all sample sizes}}\label{orig:V:14}

We keep the same square, iid sequence, and $C\in(0,\infty)$, expand the sandwich for $A_n/h_n$, and finally take the common full-measure event for countably many geometric ratios.

When $n_k\le n<n_{k+1}$, both $A_n$ and $h_n$ are nonnegative and nondecreasing. Thus

\begin{equation}
\frac{A_{n_k}}{h_{n_{k+1}}}\le\frac{A_n}{h_n}\le\frac{A_{n_{k+1}}}{h_{n_k}}.
\tag{14.1}\label{eq:original-14-1}
\end{equation}

The relative floor error vanishes, so

\begin{equation*}
\frac{n_{k+1}}{n_k}\to b,\quad
\frac{\log n_{k+1}}{\log n_k}\to1,\quad
\frac{h_{n_{k+1}}}{h_{n_k}}\to\sqrt b.
\end{equation*}

By \eqref{eq:original-13-3} the left side of \eqref{eq:original-14-1} tends to $C/\sqrt b$ and the right side to $C\sqrt b$. For this fixed $b$, almost surely,

\begin{equation}
\frac C{\sqrt b}\le\liminf_n\frac{A_n}{h_n}
\le\limsup_n\frac{A_n}{h_n}\le C\sqrt b.
\tag{14.2}\label{eq:original-14-2}
\end{equation}

Now take only the countable sequence $b_j=1+1/j$. Intersect the full-measure events for each $\Omega_{b_j}$ and for \eqref{eq:original-12-2}. The intersection still has probability one, and \eqref{eq:original-14-2} holds there for all $j$ simultaneously. Let $j\to\infty$ to get

\begin{equation*}
\frac{A_n}{h_n}\longrightarrow C.
\end{equation*}

Finally, $S_n=A_n+M_n$ and \eqref{eq:original-12-2} give

\begin{equation}
\boxed{\Xi_n=\frac{S_n}{h_n}\longrightarrow C\quad\text{a.s.}}
\tag{14.3}\label{eq:original-14-3}
\end{equation}

This uses the same infinite iid sequence of Section~\ref{orig:V:1} and covers all integer sample sizes. Combining the preceding Gaussian limit with \eqref{eq:original-7-7} establishes the positive finite mean limit; apply this result with $C=\kappa_1/(2\sqrt\pi)$. The final combination follows in the last section.

\section{Final combination of the main theorem}\label{sec:conclusion}

We now combine the proved limits on their actual domains and with their actual normalizations. The constant comes from the product of the normalized Gaussian width and the square root of the Neumann trace coefficient. The probability argument then applies to the original sample sequence. No compatible Gaussian approximation couplings across sample sizes are required; the pathwise argument uses the original infinite sample sequence throughout.

\begin{proof}[Final combination for Theorem~\ref{thm:main}]

Apply the proved statements in order, retaining the original probability space, area law, and sample sequence, with no additional analytic-limit assumptions.

Section~\ref{sec:preliminaries} gives, for every probability pair in the original problem,

\begin{equation*}
d_{BL}(\nu,\eta)=W_1^Q(\nu,\eta).
\end{equation*}

Section~\ref{sec:periodic} proves the periodic endpoint limit and positive finiteness of $\kappa_1$. Section~\ref{sec:comparison} transfers it to the square by common white noise and real--complex coefficient calculations, yielding the limit for every real cutoff

\begin{equation*}
\frac{F_N(L)}{\sqrt{m_N(L)}}\longrightarrow\kappa_1.
\end{equation*}

In Section~\ref{sec:empirical} fix any $B>2$ and choose $L_n=n/(\log n)^B$. Then $L_n\to\infty$ and

\begin{equation*}
\frac{m_N(L_n)}{\log n}\longrightarrow\frac1{4\pi}.
\end{equation*}

Two-scale smoothing, spectral truncation, and Gaussian replacement in flux space give the combined error $o(\sqrt{\log n/n})$. Hence

\begin{equation*}
\sqrt{\frac n{\log n}}\,\mathbb EW_1^Q(\mu_n,\mu)
=\frac{F_N(L_n)}{\sqrt{m_N(L_n)}}
\sqrt{\frac{m_N(L_n)}{\log n}}+o(1)
\longrightarrow\frac{\kappa_1}{2\sqrt\pi}
=:c_*.
\end{equation*}

By $\kappa_1\in(0,\infty)$ the constant $c_*$ is deterministic, positive, and finite, meeting every hypothesis of the almost-sure upgrade in Section~\ref{sec:empirical}. Along the original iid sequence, the submartingale and nondecreasing compensator, maximal Azuma, and countable geometric subsequences yield

\begin{equation*}
\sqrt{\frac n{\log n}}\,W_1^Q(\mu_n,\mu)
\longrightarrow c_*\qquad\text{almost surely, along all integers }n.
\end{equation*}

For each $n$ use the metric identity of Section~\ref{sec:preliminaries} to obtain the requested conclusion

\begin{equation*}
\boxed{
\sqrt{\frac n{\log n}}\,d_{BL}(\mu_n,\mu)
\longrightarrow\frac{\kappa_1}{2\sqrt\pi}\in(0,\infty)
\quad\text{almost surely, along all integers }n.}
\end{equation*}

The original existence question therefore has an affirmative answer. The constant is identified rigorously through periodic Gaussian energy, without using numerical approximations from the literature to define or prove it.

\end{proof}
\section{Foundational interfaces, integration, and measurability details}\label{app:analytic}
This section expands the interfaces actually used above. Space norms, source coefficients, and probability laws are unchanged. The new estimates can replace the corresponding foundational inputs without changing the main constant.

\subsection{Compact-space KR: finite-grid strong duality and its limit}
Let $(M,d)$ be compact and $\nu,\eta$ be Borel probability measures. Fix $\delta>0$ and a finite $\delta$-net $\{x_1,\ldots,x_m\}$. Break ties by the smallest index and take the Borel partition $A_i\subset B(x_i,\delta)$. Set $p_i=\nu(A_i)$ and $q_i=\eta(A_i)$. The finite transport problem is
\begin{equation}
\min_{\substack{\pi_{ij}\ge0\\\sum_j\pi_{ij}=p_i\\\sum_i\pi_{ij}=q_j}}
\sum_{i,j}d(x_i,x_j)\pi_{ij}
=\max_{a_i+b_j\le d(x_i,x_j)}\left(\sum_i p_i a_i+\sum_jq_jb_j\right).
\end{equation}
This is finite linear-programming strong duality. The primal is nonempty since $\pi_{ij}=p_iq_j$ is feasible, and its feasible set is closed and bounded, so a minimum exists. Convex-cone separation and the Farkas alternative produce dual variables with the same value. This finite-dimensional separation is the finite-dimensional Hahn--Banach theorem, not infinite-dimensional transport duality.

For feasible $(a,b)$ put $u_i=\min_j(d(x_i,x_j)-b_j)$. Then $a_i\le u_i$, and choosing $j=i$ gives $u_i\le-b_i$. The triangle inequality yields $u_i\le d(x_i,x_l)+u_l$; interchange $i,l$ to obtain $|u_i-u_l|\le d(x_i,x_l)$. Hence
\begin{equation}
\sum_i p_i a_i+\sum_jq_jb_j\le\sum_i(p_i-q_i)u_i.
\end{equation}
Conversely, any unit Lipschitz grid function $u$ makes $(a_i,b_j)=(u_i,-u_j)$ feasible. The finite transport value therefore equals the grid Lipschitz dual. The function
$\widetilde u(x)=\min_i(u_i+d(x,x_i))$ is a unit Lipschitz extension with $\widetilde u(x_i)=u_i$.

Pushing an original coupling onto the grid by the partition changes cost by at most $2\delta$. Conversely, lift each $\pi_{ij}$ of a finite coupling $\pi$ to
$\pi_{ij}(\nu|_{A_i}/p_i)\otimes(\eta|_{A_j}/q_j)$. If $p_iq_j=0$ then necessarily $\pi_{ij}=0$, and omit that term. The resulting measure has the original marginals and adds at most $2\delta$ to cost. Original and grid transport values therefore differ by at most $2\delta$. For unit Lipschitz $u$ their pairings differ by at most $2\delta$, since each marginal integral error is at most $\delta$. Let $\delta\downarrow0$ to obtain KR equality. For a zero-mass finite signed measure, the positive and negative parts have equal mass; normalize by that mass, apply the probability version, and scale back. If the mass is zero, both parts vanish and the result is immediate.

\subsection{Cosine completeness and identification of ordinary weak gradients}
For $u\in H^1((0,1)^2)$ reflect evenly in both coordinates to $(-1,1)^2$ and extend two-periodically. Reflection has Jacobian of absolute value one and transforms weak derivatives by the reflection signs. Thus on a fundamental periodic cell,
\begin{equation}
\|\widetilde u\|_{L^2((-1,1)^2)}^2=4\|u\|_2^2,
\qquad\|\nabla\widetilde u\|_{L^2((-1,1)^2)}^2=4\|\nabla u\|_2^2.
\end{equation}
Verify this first for smooth functions, then use $H^1$ density of smooth functions on rectangles. Across reflection seams the traces coincide and the piecewise integration boundary terms cancel. Opposite cell traces also match, so the weak gradient has no additional seam measure.

The two-periodic complex Fourier basis is $e^{i\pi k\cdot x}$. Parseval for the function and each weak derivative gives derivative coefficients $i\pi k_j\widehat {\widetilde u}(k)$. Even symmetry excludes sine modes in both coordinates; the remaining normalized cosine tensor basis is exactly $\varphi_k$. Dividing cell integrals by four gives
\begin{equation}
\|u\|_2^2=\sum_{k\in\mathbb N_0^2}|u_k|^2,
\qquad\|\nabla u\|_2^2=\sum_{k\ne0}\pi^2|k|^2|u_k|^2.
\end{equation}
Both summable tails vanish, so finite cosine projections converge in $H^1$. This proves the spectral form of the ordinary Sobolev norm and shows that the coefficient-defined Poisson potential has its ordinary weak gradient, not a separately defined formal gradient. For every mean-zero $L^2$ source, $L^2$ continuity on both sides passes finite-spectral Poisson identities to all $H^1$ tests, yielding weak zero normal flux.

\subsection{Gaussian rotation concentration without Brownian differentiation}
\begin{lemma}\label{lem:rotation}
If $A\sim N(0,I_d)$ and $F:\mathbb R^d\to\mathbb R$ is $R$-Lipschitz, then for every $s\in\mathbb R$,
\begin{equation}\label{eq:rotation-mgf}
\E\exp\{s(F(A)-\E F(A))\}\le\exp\left(\frac{\pi^2R^2s^2}{8}\right).
\end{equation}
The constant is independent of dimension.
\end{lemma}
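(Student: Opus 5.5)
This is the Maurey--Pisier rotation argument, which yields exactly the constant $\pi^2/8$. I will first reduce to smooth $F$: convolve with a compactly supported mollifier (or a small Gaussian), which preserves the Lipschitz constant $R$ and gives $|\nabla F_\epsilon|\le R$ everywhere. Then $F_\epsilon\to F$ uniformly, so $F_\epsilon(A)-\E F_\epsilon(A)\to F(A)-\E F(A)$ uniformly, and the exponential moments converge by dominated convergence. The domination holds because $|F(A)|\le|F(0)|+R|A|$ and Gaussian exponential moments of $|A|$ are finite. It therefore suffices to treat $F\in C^1$ with $|\nabla F|\le R$.

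Next I will take an independent copy $A'$ of $A$ and use Jensen in $A'$ (convexity of $\exp$, with $\E F(A')=\E F(A)$):
\[
\E\exp\{s(F(A)-\E F(A))\}\le\E\exp\{s(F(A)-F(A'))\}.
\]
For $\theta\in[0,\pi/2]$ set $A_\theta=A\sin\theta+A'\cos\theta$ and $A_\theta'=A\cos\theta-A'\sin\theta$. Then $A_{\pi/2}=A$, $A_0=A'$, and $\frac{d}{d\theta}A_\theta=A_\theta'$, so
\[
F(A)-F(A')=\int_0^{\pi/2}\nabla F(A_\theta)\cdot A_\theta'\,d\theta .
\]
Writing the integral as $\frac{\pi}{2}$ times an average over $\theta$ and applying Jensen for the uniform probability measure $\frac{2}{\pi}d\theta$, then Fubini, gives
\[
\E e^{s(F(A)-F(A'))}\le\frac2\pi\int_0^{\pi/2}\E\exp\Bigl\{\frac{\pi s}{2}\nabla F(A_\theta)\cdot A'_\theta\Bigr\}\,d\theta .
\]

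The key step is that for each fixed $\theta$ the pair $(A_\theta,A'_\theta)$ is an orthogonal rotation of $(A,A')$, so it is again a pair of independent $N(0,I_d)$ vectors. I will condition on $A_\theta$. The variable $\nabla F(A_\theta)\cdot A'_\theta$ is then a centered Gaussian with variance $|\nabla F(A_\theta)|^2\le R^2$. Hence each conditional expectation is at most $\exp(\pi^2s^2R^2/8)$, and averaging over $\theta$ preserves this bound. The bound involves only $|\nabla F|\le R$ and the one-dimensional Gaussian moment generating function, so it does not depend on $d$.

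The main obstacle is the justification steps rather than the algebra. I need the fundamental theorem of calculus along the path $\theta\mapsto A_\theta$, which requires the $C^1$ reduction. I need Fubini/Tonelli, which is valid because the integrands are positive. And I need the limit passage from smooth to Lipschitz $F$; I will handle this by dominated convergence using the Gaussian envelope $e^{|s|R(|A|+|A'|)}$, which has finite expectation.
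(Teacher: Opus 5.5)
Your proposal is correct and is essentially the paper's proof. It uses the same Maurey--Pisier rotation of the independent pair $(A,A')$, Jensen over the normalized angle measure $2\,d\theta/\pi$, the conditional Gaussian bound $\exp\bigl((\pi s/2)^2R^2/2\bigr)$, Jensen over the independent copy to recover the centered quantity, and mollification with uniform error $O(R\varepsilon)$ to pass from smooth to Lipschitz $F$. The only differences are in ordering: you reduce to smooth $F$ and center before rotating, whereas the paper does these two steps at the end.
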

\begin{proof}
First let $F$ be smooth. Take independent standard Gaussians $A,B$ and, for $0\le\theta\le\pi/2$, set
\begin{equation}
Z_\theta=\cos\theta A+\sin\theta B,
\qquad V_\theta=-\sin\theta A+\cos\theta B.
\end{equation}
This is an orthogonal transformation in $\mathbb R^{2d}$, so the joint law of $(Z_\theta,V_\theta)$ remains the product of independent standard Gaussians. The chain rule and fundamental theorem of calculus give
\begin{equation}
F(B)-F(A)=\int_0^{\pi/2}DF(Z_\theta)[V_\theta]\dd\theta.
\end{equation}
Normalize $\dd\theta$ to probability measure $2\dd\theta/\pi$ and apply Jensen to the exponential:
\begin{equation}
e^{s(F(B)-F(A))}\le\frac2\pi\int_0^{\pi/2}
\exp\{(s\pi/2)DF(Z_\theta)[V_\theta]\}\dd\theta.
\end{equation}
Conditional on $Z_\theta=z$, the linear functional $DF(z)[V_\theta]$ is centered Gaussian with variance $\|DF(z)\|^2\le R^2$. Its exponential moment is therefore at most
$\exp((s\pi/2)^2R^2/2)$. Tonelli for the nonnegative integrand gives this same bound for the exponential moment of the difference. Fix $B$, apply Jensen over $A$, and integrate $B$ to obtain the centered bound~\eqref{eq:rotation-mgf}. Linear growth $|F(x)|\le|F(0)|+R|x|$ ensures finiteness of all exponential moments used.

For general Lipschitz $F$ take a nonnegative smooth mollifier supported in the unit ball and put $F_\varepsilon=F*\rho_\varepsilon$. Integration gives $|F_\varepsilon-F|\le R\varepsilon$; the function $F_\varepsilon$ remains $R$-Lipschitz, so $\|DF_\varepsilon\|\le R$. The centered functions differ by at most $2R\varepsilon$. For $0<\varepsilon\le1$ their exponentials are dominated by $C_s\exp(|s|R|A|)$, integrable because
$c|x|\le a|x|^2+c^2/(4a)$ and Gaussian square-exponential integrability hold for $0<a<1/2$. Dominated convergence passes the smooth conclusion to $F$.
\end{proof}

The rotation argument gives a dimension-independent constant. The optimal constant $R^2/2$ from the exponential-martingale argument above gives the same localization order. We use Lemma~\ref{lem:rotation} below to check the implementation's constants explicitly.

\subsection{Explicit grid-covering constants with actual test centers}
For integer $m\ge1$ encode at vertices $(i/m,j/m)$ the integers $\lfloor m u(i/m,j/m)\rfloor$. Since $u\in\mathcal K$ satisfies $\|u\|_\infty\le\sqrt2$, the root has at most $4m+3$ codes. Connect vertices by a row-by-row nearest-neighbor tree. Parent and child original values differ by at most $1/m$, so their integer codes differ by at most two. Counting the seven encompassing values $-3,\ldots,3$ suffices. Nonempty code classes number at most
\begin{equation}
(4m+3)7^{(m+1)^2}\le7^{m+(m+1)^2}\le7^{5m^2}.
\end{equation}
For the first inequality $4m+3\le7^m$ follows from equality at $m=1$ and induction; the second is equivalent to $4m^2-3m-1\ge0$.

Two functions in a code class differ at each vertex by less than $1/m$. Every point is within $\sqrt2/m$ of a vertex, so their uniform distance is less than $(1+2\sqrt2)/m<5/m$. Choose an \emph{original test} in each nonempty class, rather than an arbitrary interpolant. For $0<\varepsilon\le1$ take $m=\lceil5/\varepsilon\rceil$, so $5/m\le\varepsilon$ and $m\varepsilon\le6$. The covering number $N_\varepsilon$ satisfies
\begin{equation}\label{eq:explicit-entropy}
\log N_\varepsilon\le5m^2\log7\le\frac{180\log7}{\varepsilon^2}.
\end{equation}
To avoid extrema on nonclosed code classes, the actual implementation intersects closed uniform-norm balls about these representatives with $\mathcal K$. The intersections are nonempty compact sets and still cover $\mathcal K$.

The local support functions have Lipschitz constant $R=\sqrt2$ in finite Gaussian coordinates. For finitely many such functions $Y_i$, Lemma~\ref{lem:rotation}, the finite-sum bound, and Jensen give, for any $a>0$,
\begin{equation}
\E\max_iY_i\le\max_i\E Y_i+\frac{\log N_\varepsilon}{a}+\frac{\pi^2a}{4}.
\end{equation}
Choose $a=\varepsilon^{-1}$. By~\eqref{eq:explicit-entropy} and $\varepsilon\le1$,
\begin{equation}
F_N(L)\le F_\varepsilon(L)+\frac{180\log7+\pi^2/4}{\varepsilon}.
\end{equation}
Add the boundary error $(\varepsilon/\delta+8\delta)\sqrt m$ from the main proof and choose $\varepsilon=m^{-1/3}$ and $\delta=m^{-1/6}$. For $m>64$ all boundary-parameter conditions hold, giving
\begin{equation}
F_N(L)\le F_{0,N}(L)+C_1m_N(L)^{1/3},
\qquad C_1=180\log7+\pi^2/4+9.
\end{equation}
This constant includes both global rescaling and boundary-strip losses.

\subsection{Explicit heat-trace and sharp-cutoff error constants}
Let $\Theta(s)=\sum_{j\in\mathbb Z}e^{-\pi s j^2}$. Gaussian Fourier transform and Poisson summation give
$\Theta(s)=s^{-1/2}\Theta(s^{-1})$. Since Neumann indices form the nonnegative quadrant, for $0<t\le1$ the heat trace including the constant mode satisfies
\begin{equation}
\sum_{k\in\mathbb N_0^2}e^{-t\pi^2|k|^2}
\le\Theta(\pi t)^2
\le\frac{\Theta(\pi^{-1})^2}{\pi t}=\frac{C_{\rm heat}}t,
\qquad C_{\rm heat}=\frac{\Theta(\pi^{-1})^2}{\pi}.
\end{equation}
The last step uses $1/(\pi t)\ge1/\pi$ and monotonic decrease of each nonnegative theta term with its parameter. Gaussian tails make the fixed-parameter theta series absolutely convergent, so the constant is finite and positive. Also $u\lambda e^{-u\lambda}\le e^{-1}$, hence
\begin{equation}
\sum_{k\ne0}\lambda_ke^{-2u\lambda_k}
\le\frac{C_{\rm heat}}{eu^2}\le\frac{eC_{\rm heat}}{u^2}.
\end{equation}
The last weaker estimate agrees with the implementation's weighted heat-trace bound.

For $L\ge1$ set $x=\lambda/L$. When $0<x\le1$, $(1-e^{-x})^2/x\le x\le e e^{-x}$; when $x>1$, $e^{-2x}/x\le e^{-x}$. Consequently
\begin{equation}
\sum_{k\ne0}\frac{(\mathbf1_{\{\lambda_k\le L\}}-e^{-\lambda_k/L})^2}{\lambda_k}
\le\frac eL\sum_{k\ne0}e^{-\lambda_k/L}\le eC_{\rm heat}.
\end{equation}
The expected difference of actual empirical heat and sharp support functions is at most $\sqrt{eC_{\rm heat}}$, an explicit version of $O(1)$ in the main proof.

\subsection{Continuous-function-valued empirical heat derivatives and the actual integral identity}
Fix $n\ge1$, the sample $\omega$, and $0<s\le t$. The original normalized coefficients
$a_{n,k}=n^{-1/2}\sum_{i=1}^n\varphi_k(X_i)$ satisfy $|a_{n,k}|\le2\sqrt n$. Set
\begin{equation}
D_n(u)=\sum_{k\ne0}\lambda_ke^{-u\lambda_k}a_{n,k}\varphi_k\in C(Q),\qquad u\in[s,t].
\end{equation}
Each term has $C(Q)$ norm at most $4\sqrt n\lambda_ke^{-s\lambda_k}$. This envelope is summable: splitting into bands $j\le\lambda<j+1$ bounds the sum by $C\sqrt n\sum_{j\ge0}(j+1)^2e^{-sj}$. The Weierstrass test gives uniform convergence in $C(Q)$ norm on $[s,t]$. Finite terms are time-continuous, so $D_n$ is a continuous $C(Q)$-valued function and has an actual Bochner integral.

The empirical heat source $P_u^NZ_n$ is likewise constructed from a uniformly summable series with coefficients $e^{-u\lambda_k}a_{n,k}$. It equals the finite heat-kernel average
\begin{equation}
\frac1{\sqrt n}\sum_{i=1}^n\bigl(p_N(u,\cdot,X_i)-1\bigr).
\end{equation}
Integrating the kernel modewise gives the same coefficients. Cosine completeness identifies the two in $L^2$, so their continuous representatives agree everywhere on the square. The coefficient functional $f\mapsto\int f\varphi_k$ is continuous on $C(Q)$ and commutes with the Bochner integral. Thus
\begin{equation}
\left\langle\int_s^tD_n(u)\dd u,\varphi_k\right\rangle
=(e^{-s\lambda_k}-e^{-t\lambda_k})a_{n,k}.
\end{equation}
The constant coefficient vanishes. Completeness again gives
\begin{equation}\label{eq:strong-heat-integral}
P_s^NZ_n-P_t^NZ_n=\int_s^tD_n(u)\dd u
\end{equation}
at least in $L^2(Q)$, and the uniform representations in fact give equality in the continuous-function space. Inverse flux is a continuous linear map from $L^2_0$ to the gradient Hilbert space, with norm at most $1/\pi$, so it commutes with this integral. The $H_N^{-1}$ identity, flux triangle inequality, and all Lipschitz pairings in the main proof therefore involve the same actual source.

\subsection{Joint measurability, Fubini, and integrability of actual heat displacement}
Finite terms $(u,\omega)\mapsto\lambda_ke^{-u\lambda_k}a_{n,k}(\omega)\varphi_k$ are strongly measurable because $a_{n,k}$ is a finite sum of measurable sample evaluations. Norm limits of the summable series are strongly measurable, as are their continuous linear inverse fluxes. On $[s,t]\times\Omega$ the deterministic envelope gives an integrable constant bound; both source and flux are product-integrable. Tonelli and empirical covariance give the nonnegative spectral energy sum
\begin{equation}
\E\|J_N(D_n(u))\|_2^2=\sum_{k\ne0}\lambda_ke^{-2u\lambda_k}
\le\frac{eC_{\rm heat}}{u^2}.
\end{equation}
Probability Cauchy--Schwarz, Fubini, and the integral norm bound give
\begin{equation}
\E\left\|\int_s^tJ_N(D_n(u))\dd u\right\|_2
\le\sqrt{eC_{\rm heat}}\log(t/s).
\end{equation}
Only after division by $\sqrt n$ do we recover the transport scale for the two smoothed probability measures. For $s=1/n$ and $t=(\log n)^B/n$ we may therefore choose
\begin{equation}
C_B=\sqrt\pi+\sqrt{eC_{\rm heat}}\,|B|
\end{equation}
to control $C_B(1+\log\log n)/\sqrt n$ in equation $(4.11)$ of the main proof.

We must also check that expectations involve measurable transport quantities. For finite prefixes $x=(x_i)$ and $y=(y_i)$, empirical distances differ by at most $n^{-1}\sum_i|x_i-y_i|$, using the atomwise coupling. Heat contraction gives the same bound after smoothing. The variation of $x\mapsto W_1^Q(\mu_n(x),P_t^N\mu_n(x))$ is thus at most $2n^{-1}\sum_i|x_i-y_i|$, making it continuous. Composition with the original measurable prefix is measurable, bounded by diameter $\sqrt2$, and hence integrable. The same applies to distances between two heat-smoothed measures.

\subsection{Finite-dimensional Gaussian laws, quadratic transport, and varying covariance}
The Euclidean norm of coordinates $y_k=\varphi_k(X)/\sqrt{\lambda_k}$ equals the actual flux's $L^2$ norm by gradient orthogonality. Covariance is the nonnegative diagonal matrix $\Sigma_L=(\lambda_k^{-1})$. For each fixed $L$ the Gaussian vector $(g_k/\sqrt{\lambda_k})$ has precisely this law. Mapping coordinates back to the source gives the same Neumann cutoff field, not merely a field with nearby covariance.

Zhai's source theorem~\cite{Zhai} gives the explicit bound $5\sqrt d\,\beta(1+\log n)/\sqrt n$. For $n\ge2$, the inequality $1+\log n\le(1+1/\log2)\log n$ yields the universal-constant form used above. If covariance degenerates, each centered zero-variance direction vanishes almost surely. Restrict to the positive-variance support and embed isometrically back; dimension can only decrease, so the same bound holds. No $\lambda_{\min}(\Sigma_L)^{-1}$ occurs, allowing every varying $L_n$ separately without compatible couplings across rows.

For unit Lipschitz $\Phi$, each coupling gives
$|\E\Phi(U)-\E\Phi(V)|\le(\E|U-V|^2)^{1/2}$. Taking the infimum gives control by $W_2$. This mean inequality remains valid even without selecting an optimal coupling on one probability space. Actual coordinates have finite second moments, and linear growth of $\Phi$ ensures integrability.

\subsection{A second verified spectral-band reduction}
The logarithmic flux-integral route has been proved in the main text. Another checkable bound uses fine heat smoothing at $1/n$, heat-to-sharp error at cutoff $n$, and nested spectral bands. For $1\le L\le n$ empirical second moments give
\begin{equation}
\E\|J_N(\Pi_nZ_n-\Pi_LZ_n)\|_2^2=m_N(n)-m_N(L).
\end{equation}
Flux stability and the triangle inequality therefore yield
\begin{equation}
\left|\sqrt n\,\E W_n-\E\mathcal S(\Pi_LZ_n)\right|
\le\sqrt\pi+\sqrt{eC_{\rm heat}}+\sqrt{m_N(n)-m_N(L)}.
\end{equation}
Take $L=L_n$. The two trace asymptotics give $m_N(n)-m_N(L_n)=B\log\log n/(4\pi)+O(1)$. Division by $\sqrt{\log n}$ makes this error vanish. Gaussian replacement then gives the same mean limit. This independent check does not replace the actual flux integral and nonnegative smoothing remainder proved above.

\subsection{Conditional expectations on arbitrary original probability spaces}
The probability upgrade uses only independence and a common law for measurable $X_i$. Every empirical quantity is a Borel function $F_n:Q^n\to\mathbb R$ of a finite prefix; measurability and boundedness of the distance were proved above. Independence makes the joint prefix law exactly $\mu^{\otimes n}$. For $0\le i\le n$ define
\begin{equation}
T_i(x_1,\ldots,x_i)=\int_{Q^{n-i}}F_n(x_1,\ldots,x_n)
\dd\mu(x_{i+1})\cdots\dd\mu(x_n).
\end{equation}
Fubini gives Borel measurability. Compose with the original prefix. For every bounded $\mathcal F_i$-measurable test, independence and iterated integration prove the defining conditional-expectation identity. This explicit function is thus a version of conditional expectation on the original space; no regular conditional distribution on $\Omega$ is required.

The coordinate oscillation bound survives integration over the same remaining sample. Prove conditional Hoeffding first by $\mu$ integration for fixed $x_1,\ldots,x_{i-1}$, then compose with the original prefix. This justifies every conditional-moment, bounded-differences, and maximal-Azuma application above. Conditional inequalities hold first for individual tests; a countable union of null sets gives common versions. The bounds $B_n\ge0$, $B_n\le\sqrt2$, and the martingale-increment bound can likewise hold on one full-measure event, making the compensator pathwise nondecreasing. After Borel--Cantelli we intersect only over countably many $\varepsilon$ and geometric ratios, without asserting simultaneous full-measure statements for uncountable parameters.

If needed, completion adds only null sets and preserves all integrable expectations and independence. The final limit event is expressed by liminf and limsup of measurable real functions and belongs to the original $\mathcal A$, so its probability-one conclusion returns to the original space. Universally measurable periodic truncation selections apply only to the auxiliary Gaussian law and impose no standard-Borel assumption on the original probability space.

\clearpage
\bibliographystyle{plain}
\bibliography{references}

\end{document}